\newtheorem{thm}{Theorem}[section]
\newtheorem{cor}[thm]{Corollary}
\newtheorem{lem}[thm]{Lemma}
\newtheorem{prop}[thm]{Proposition}
\theoremstyle{definition}
\theoremstyle{remark}
\newtheorem{rmk}[thm]{Remark}
\newtheorem{example}[thm]{Example}
\numberwithin{equation}{section}
\newcommand{\CC}{\mathbb{C}}
\newcommand{\NN}{\mathbb{N}}
\newcommand{\QQ}{\mathbb{Q}}
\newcommand{\TT}{\mathbb{T}}
\newcommand{\ZZ}{\mathbb{Z}}
\newcommand{\RR}{\mathbb{R}}
\newcommand{\Cc}{\mathcal{C}}
\newcommand{\Gg}{\mathcal{G}}
\newcommand{\Hh}{\mathcal{H}}
\newcommand{\Ii}{\mathcal{I}}
\newcommand{\Mm}{\mathcal{M}}
\newcommand{\Oo}{\mathcal{O}}
\newcommand{\Pp}{\mathcal{P}}
\newcommand{\Qq}{\mathcal{Q}}
\newcommand{\sA}{\mathscr{A}}
\newcommand{\sB}{\mathscr{B}}
\newcommand{\Ad}{\operatorname{Ad}}
\newcommand{\Aut}{\operatorname{Aut}}
\newcommand{\clsp}{\overline{\lsp}}
\newcommand{\dom}{\operatorname{dom}}
\newcommand{\id}{\operatorname{id}}
\newcommand{\Ind}{\operatorname{Ind}}
\newcommand{\lsp}{\operatorname{span}}
\newcommand{\lt}{\operatorname{lt}}
\newcommand{\Per}{\operatorname{Per}}
\newcommand{\Prim}{\operatorname{Prim}}
\newcommand{\shift}{T}%{\mathfrak{s}}
\newcommand{\Zcat}[1]{\underline{Z}^{#1}}
\title[Simplicity of twisted $k$-graph algebras]{Simplicity of twisted $C^*$-algebras of higher-rank graphs and crossed products by quasifree actions}
\author{Alex Kumjian}
\address[A. Kumjian]{Department of Mathematics (084)\\ University
of Nevada\\ Reno NV 89557-0084\\ USA} \email{alex@unr.edu}
\author{David Pask}
\address[D.Pask and A. Sims]{School of Mathematics and Applied Statistics\\
University of Wollongong\\
NSW  2522\\
AUSTRALIA}
\email{dpask, asims@uow.edu.au}
\author{Aidan Sims}
\keywords{$C^*$-algebra; graph algebra; $k$-graph; simplicity; twisted $C^*$-algebra; groupoid; cocycle; cohomology; crossed product; quasifree action}
\subjclass[2010]{Primary 46L05}
\thanks{This research was supported by the Australian Research Council. We thank Becky Armstrong for bringing a number of typographical errors to our attention.}
\date{\today}
\begin{document}
\dedicatory{Dedicated to George A.\ Elliott on the occasion of his 70th birthday.}
\begin{abstract}
We characterise simplicity of twisted $C^*$-algebras of row-finite $k$-graphs with no
sources. We show that each 2-cocycle on a cofinal $k$-graph determines a canonical
second-cohomology class for the periodicity group of the graph. The groupoid of the
$k$-graph then acts on the cartesian product of the infinite-path space of the graph with
the dual group of the centre of any bicharacter representing this second-cohomology
class. The twisted $k$-graph algebra is simple if and only if this action is minimal. We
apply this result to characterise simplicity for many twisted crossed products of
$k$-graph algebras by quasifree actions of free abelian groups.
\end{abstract}

\maketitle

\section{Introduction}
Higher-rank graphs, or $k$-graphs, are combinatorial objects introduced by the first two
authors in \cite{KP2000} as graph-based models for the higher-rank Cuntz--Krieger
algebras studied by Robertson and Steger \cite{RobSte:JraM99}. These $C^*$-algebras have
been widely studied \cite{BNR, DavidsonYang:CJM09, Evans:NYJM08, FarthingMuhlyEtAl:SF05},
and their fundamental structure theory is by now fairly well understood. They provide
interesting examples in noncommutative geometry \cite{PaskRennieEtAl:CMP09,
SkalskiZacharias:HJM08} and have been used to establish weak semiprojectivity
\cite{Spielberg:JOT07} and calculate nuclear dimension \cite{RuizSimsEtAl:xx14} for UCT
Kirchberg algebras.

Recently \cite{KPS3}, we introduced a cohomology theory for $k$-graphs, and studied the
associated twisted $k$-graph algebras $C^*(\Lambda, c)$ \cite{KPS4,KPS5}. These include
many examples that do not arise naturally from untwisted $k$-graph $C^*$-algebras (see
\cite[Example~7.10]{KPS4}, \cite{PaskSierakowskiEtAl:xx14}, and
Theorem~\ref{thm:easytwistsimplicity} below). So twisted $k$-graph $C^*$-algebras could
serve as useful models of various classes of classifiable $C^*$-algebras, particularly as
they are always nuclear and belong to the UCT class \cite[Corollary~8.7]{KPS4}.

It is therefore important to understand when twisted $k$-graph $C^*$-algebras are simple.
Simplicity of untwisted $k$-graph algebras was characterised in \cite{RoSi}, and
\cite[Corollary~8.2]{KPS4} shows that if $C^*(\Lambda)$ is simple, so is every
$C^*(\Lambda, c)$. But the converse fails: regarding $\NN^2$ as a $2$-graph $T_2$, the
associated $2$-graph algebra $C^*(T_2) \cong C(\TT^2)$ is not simple, but for each
irrational number $\theta$ there is a cocycle $c_\theta$ such that $C^*(T_2, c_\theta)$
is the irrational-rotation algebra $A_\theta$. This elementary example indicates that
characterising simplicity of $C^*(\Lambda, c)$ is a subtle problem---an indication
confirmed by the partial results in \cite{SWW}. In this paper, we present a complete
solution to this problem: Theorem~\ref{thm:main simple} gives a necessary and sufficient
condition for $C^*(\Lambda, c)$ to be simple.

The broad strokes of our solution are as follows. We show that $C^*(\Lambda, c)$ is
isomorphic to the $C^*$-algebra of a Fell bundle $\sB_\Lambda$ over a topologically
principal amenable \'etale quotient $\Hh_\Lambda$ of the $k$-graph groupoid introduced in
\cite{KP2000}. Results of Ionescu and Williams \cite{IW} show that if $\sB$ is Fell
bundle over a groupoid $\Hh$, then $\Hh$ acts on the primitive ideal space of the
$C^*$-subalgebra $C^*(\Hh^{(0)}; \sB) \subseteq C^*(\Hh; \sB)$ sitting over the unit
space of $\Hh$. We prove that if $\Hh$ is topologically principal, amenable and \'etale,
then $C^*(\Hh; \sB)$ is simple if and only if this action is minimal. In particular,
$C^*(\Hh_\Lambda; \sB_\Lambda)$ is simple if and only if the Ionescu-Williams action of
$\Hh_\Lambda$ is minimal. Our task is then to identify the action; it turns out that this
is quite intricate.

The $k$-graph has a periodicity group $\Per(\Lambda) \subseteq \ZZ^k$ \cite{CKSS}. We use
groupoid technology to show that $c$ determines a class $[\omega] \in H^2(\Per(\Lambda),
\TT)$. We then identify a map from $\Hh_\Lambda$ to the primitive ideal space of the
noncommutative torus $A_\omega$ that becomes a homomorphism when $\Prim(A_\omega)$ is
regarded as a quotient of $\TT^k$. The map $\Hh_\Lambda \to \Prim(A_\omega)$ determines
an action $\theta$ of $\Hh_\Lambda$ on $\Hh_\Lambda^{(0)} \times \Prim(A_\omega)$. We
prove that there is a homeomorphism between this cartesian-product space and
$\Prim(C^*(\Hh^{(0)}_\Lambda; \sB_\Lambda))$ which intertwines the action $\theta$ with
the action described in the preceding paragraph, and deduce our main result.

Even the action $\theta$ is not easy to compute in a generic example. So we develop some
key examples where it \emph{can} be computed. Firstly, if $A_\omega$ is simple, then
$\theta$ is minimal precisely when $\Lambda$ is cofinal. So we show how to decide
efficiently whether $A_\omega$ is simple. Secondly, we show how to recover many twisted
crossed products of $k$-graph algebras by quasifree actions of $\ZZ^l$ as twisted
$(k+l)$-graph algebras, and show that our main theorem yields a very satisfactory
characterisation of simplicity of many such twisted crossed products.

\medskip

The paper is organised as follows. In Section~\ref{sec:background} we establish the
background that we need regarding $k$-graphs and their groupoids. This includes a fairly
general technical result giving a sufficient condition for the interior of the isotropy
of a Hausdorff \'etale groupoid to be closed. The main point is that the interior of the
isotropy in the groupoid $\Gg_\Lambda$ of a cofinal $k$-graph $\Lambda$ is always closed,
and relatively easy to describe: Corollary~\ref{cor:ILambda} says that if $\Per(\Lambda)$
is the periodicity group of the $k$-graph discussed in \cite{CKSS}, then the interior
$\Ii_\Lambda$ of the isotropy in $\Gg_\Lambda$ can be identified canonically with
$\Gg_\Lambda^{(0)} \times \Per(\Lambda)$. Moreover, we can form the quotient groupoid
$\Hh_\Lambda := \Gg_\Lambda/\Ii_\Lambda$, and this quotient is itself a topologically
principal amenable \'etale groupoid.

In Section~\ref{sec:action and main}, we study carefully the relationship between
cocycles on a $k$-graph $\Lambda$, and dynamics associated to the corresponding $k$-graph
groupoid. In \cite{KP2000}, the $k$-graph algebra $C^*(\Lambda)$ is identified with the
$C^*$-algebra of a groupoid $\Gg_\Lambda$ with unit space $\Lambda^\infty$, the space of
infinite paths in $\Lambda$. We showed in \cite{KPS4} that each $C^*(\Lambda, c)$ can be
realised as a twisted $C^*$-algebra $C^*(\Gg_\Lambda, \sigma_c)$ of the same groupoid.
The restriction of the groupoid cocycle $\sigma_c$ to each fibre of $\Ii_\Lambda$
determines a 2-cocycle of $\Per(\Lambda)$. We show that these 2-cocycles are all
cohomologous, and deduce in Proposition~\ref{prp:constant in Ii} that $\sigma_c$ is
cohomologous to a cocycle $\sigma$ whose restriction to $\Ii_\Lambda$ is of the form
$1_{\Lambda^\infty} \times \omega$ for some bicharacter $\omega$ of $\Per(\Lambda)$. We
then have $C^*(\Lambda, c) \cong C^*(\Gg_\Lambda, \sigma)$, and we seek to characterise
simplicity of the latter. As in \cite{OPT}, the primitive-ideal space of each fibre of
$C^*(\Ii_\Lambda, \sigma)$ can be identified with the character space of the kernel
$Z_\omega \subseteq \Per(\Lambda)$ of the antisymmetric bicharacter associated to
$\omega$. So $\Prim C^*(\Ii_\Lambda, \sigma)$ can be identified with $\Lambda^\infty
\times \widehat{Z}_\omega$. We construct from $\sigma$ a
$\Per(\Lambda)\widehat{\;}$-valued 1-cocycle $r^\sigma$ on $\Gg_\Lambda$. We then have
the wherewithal to state our main theorem, Theorem~\ref{thm:main simple}, although the
proof must wait until the end of the subsequent section. We show in
Lemma~\ref{lem:compute omega} how to compute the bicharacter $\omega$, and hence the
group $Z_\omega$ appearing in the main theorem, without passing to the groupoid
$\Gg_\Lambda$. We then show that $r^\sigma$ determines an action $\theta$ of the quotient
$\Hh_\Lambda$ of $\Gg_\Lambda$ by the interior of its isotropy on $\Lambda^\infty \times
\widehat{Z}_\omega$. Theorem~\ref{thm:main simple} can be recast as saying that
$C^*(\Lambda, c)$ is simple if and only if $\theta$ is minimal (see
Corollary~\ref{cor:simple<->minimal}).

In Section~\ref{sec:Fell bundle} we prove Theorem~\ref{thm:main simple} using the
technology of Fell bundles. We use ideas from \cite{DKR} (see also \cite{Ram99}) to
recover $C^*(\Lambda, c)$ as the $C^*$-algebra of a Fell bundle $\sB_\Lambda$ over the
quotient $\Hh_\Lambda$ of $\Gg_\Lambda$ discussed above. We show that the restriction
$C^*(\Hh_\Lambda^{(0)}; \sB_\Lambda)$ of $C^*(\Hh_\Lambda; \sB_\Lambda)$ to the unit
space of $\Hh_\Lambda$ is isomorphic to $C_0(\Lambda^\infty) \otimes C^*(\Per(\Lambda),
\omega)$. Since $\Per(\Lambda)$ is a free abelian group, the $C^*$-algebra
$C^*(\Per(\Lambda), \omega)$ is a noncommutative torus, and has primitive ideal space
$\Lambda^\infty \times \widehat{Z}_\omega$ (see, for example, \cite{Rieffel:PSPM82}).
Results of Ionescu and Williams \cite{IW} show that conjugation in $\sB_\Lambda$
determines an action of $\Hh_\Lambda$ on the primitive-ideal space of
$C^*(\Hh_\Lambda^{(0)}; \sB_\Lambda)$ and hence on $\Lambda^\infty \times
\widehat{Z}_\omega$. By identifying specific elements in the fibres of $\sB_\Lambda$ that
implement the Ionescu--Williams action, we prove that it matches up with the action
$\theta$ of Section~\ref{sec:action and main}. In Lemma~\ref{lem:bundle uniqueness} and
Corollary~\ref{cor:minimality}, we adapt the standard argument in \cite{Ren} to prove
that if $\sB$ is a Fell bundle over a topologically principal amenable \'etale groupoid
$\Hh$, then $C^*(\Hh; \sB)$ is simple if and only if the Ionescu-Williams action of $\Hh$
is minimal. We then prove our main theorem by applying this result to the bundle
$\sB_\Lambda$ over $\Hh_\Lambda$.

In Section~\ref{sec:CP}, we investigate a broad class of examples of twisted higher-rank
graph $C^*$-algebras where the hypotheses of our main result are readily checkable. We
show how a $\TT^l$-valued $1$-cocycle on a $k$-graph combined with a bicharacter $\omega$
of $\ZZ^l$ can be combined to obtain a 2-cocycle on the Cartesian-product $(k+l)$-graph
$\Lambda \times \NN^l$ for which the associated twisted $(k+l)$-graph $C^*$-algebra is
isomorphic to a twisted crossed product of $C^*(\Lambda)$ by $\ZZ^l$. We demonstrate that
the hypotheses of our main theorem can be effectively checked for these examples, and
obtain a usable characterisation of simplicity of crossed products arising in this way
when $\Lambda$ is aperiodic.

We finish in Section~\ref{sec:examples} by presenting a number of concrete examples of
our main result, showing how all of its working parts interact and demonstrating that
each of the ingredients of the statement is genuinely necessary to obtain a satisfactory
characterisation of simplicity. We also present a somewhat unrelated example which we
believe is nevertheless interesting in its own right: a $3$-graph all of whose twisted
$C^*$-algebras (including the untwisted one) are simple, but for which the twisted
$C^*$-algebras are not all mutually isomorphic.

\section{Background}\label{sec:background}

Throughout the paper, we regard $\NN^k$ as a monoid under addition, with identity 0 and
generators $e_1, \dots, e_k$. For $m,n \in \NN^k$, we write $m_i$ for the
$i$\textsuperscript{th} coordinate of $m$, and define $m \vee n \in \NN^k$ by $(m \vee
n)_i = \max\{m_i, n_i\}$.

Given a small category $\Cc$, we write $\Cc^{*2} = \{(\lambda, \mu) \in \Cc \times \Cc :
s(\lambda) = r(\mu)\}$ for the collection of composable pairs in $\Cc$. A $\TT$-valued
$2$-cocycle\footnote{In \cite{KPS4} these were called \emph{categorical cocycles}, in
contradistinction to cubical cocycles.} $c$ on $\Cc$ is a map $c :\Cc^{*2} \to \TT$ such
that $c(\lambda, s(\lambda)) = c(r(\lambda), \lambda) = 1$ for all $\lambda$ and
$c(\mu,\nu)c(\lambda, \mu\nu) = c(\lambda,\mu)c(\lambda\mu, \nu)$ for composable
$\lambda$, $\mu$, $\nu$. If $b : \Cc \to \TT$ is a function with $b(\alpha) = 1$ for
every identity morphism $\alpha$ of $\Cc$, then $\delta^1 b(\mu,\nu) :=
b(\mu)b(\nu)\overline{b(\mu\nu)}$ defines a $2$-cocycle called the 2-coboundary
associated to $b$. Two $2$-cocycles $c,c'$ are cohomologous if $(\mu,\nu) \mapsto
\overline{c(\mu,\nu)}c'(\mu,\nu)$ is a 2-coboundary.

If $\Cc$ is a discrete group, then any function $c : \Cc \times \Cc \to \TT$ for which the functions
$c(\cdot, \alpha)$ and $c(\beta, \cdot)$ are homomorphisms is a cocycle;
such cocycles are called \emph{bicharacters.}
If, in addition, $\Cc$ is abelian, every $2$-cocycle is cohomologous to a bicharacter
(see \cite[Theorem 7.1]{Kleppner}).
For more background on 2-cocycles and bicharacters on abelian groups see \cite{Kleppner, BK, OPT}
(note that in the first two references 2-cocycles are called multipliers).

\subsection{\texorpdfstring{$k$}{k}-graphs and twisted
\texorpdfstring{$C^*$}{C*}-algebras}

Let $\Lambda$ be a countable small category and $d : \Lambda \to \NN^k$ be a functor.
Write $\Lambda^n := d^{-1}(n)$ for $n \in \NN^k$. Then $\Lambda$ is a $k$-graph (see
\cite{KP2000}) if $d$ satisfies the \emph{factorisation property}: $(\mu,\nu) \mapsto
\mu\nu$ is a bijection of $\{(\mu,\nu) \in \Lambda^m \times \Lambda^n : s(\mu) =
r(\nu)\}$ onto $\Lambda^{m+n}$ for each $m,n \in \NN^k$. We then have $\Lambda^0 =
\{\id_o : o\in \operatorname{Obj}(\Lambda)\}$, and so we regard the domain and codomain
maps as maps $s, r : \Lambda \to \Lambda^0$. Recall from \cite{PaskQuiggEtAl:NYJM04} that
for $v,w \in \Lambda^0$ and $X \subseteq \Lambda$, we write
\begin{gather*}
v X := \{\lambda \in X : r(\lambda) = v\},\qquad X w := \{\lambda \in X : s(\lambda) =
w\},\qquad\text{and}\\
v X w = vX \cap Xw.
\end{gather*}
A $k$-graph $\Lambda$ is \emph{row-finite with no sources} if $0 < |v\Lambda^n| < \infty$
for all $v \in \Lambda^0$ and $n \in \NN^k$. See \cite{KP2000} for further details
regarding the basic structure of $k$-graphs.

Let $\Lambda$ be a row-finite $k$-graph with no sources. We recall the definition of the
\emph{infinite path space} $\Lambda^\infty$ given in \cite[Definition 2.1]{KP2000}. We
write $\Omega_k$ for the $k$-graph $\{(m,n) \in \NN^k : m \le n\}$ with $r(m,n) = (m,m)$,
$s(m,n) = (n,n)$, $(m,n)(n,p) = (m,p)$ and $d(m,n) = n - m$. We identify $\Omega_k^{0}$
with $\NN^k$ by $(m,m) \mapsto m$. We define $\Lambda^\infty$ to be the collection of all
$k$-graph morphisms $x : \Omega_k \to \Lambda$. For $p \in \NN^k$, we define $\shift^p :
\Lambda^\infty \to \Lambda^\infty$ by $(\shift^p x)(m,n) := x(m+p, n+p)$ for all $(m,n)
\in \Omega_k$. (Traditionally, as in \cite{KP2000}, these shift maps $\shift^p$ have been
denoted $\sigma^p$, but we will use $\sigma$ in this paper to denote a $2$-cocycle on
$\Gg_\Lambda$.) For $x \in \Lambda^\infty$ we denote $x(0)$ by $r(x)$. For $\lambda \in
\Lambda$ and $x \in \Lambda^\infty$ with $r(x) = s(\lambda)$, there is a unique element
$\lambda x \in \Lambda^\infty$ such that $(\lambda x)(0, d(\lambda)) = \lambda$ and
$\shift^{d(\lambda)}(\lambda x) = x$.

As in \cite[Definition~4.7]{KP2000}, we say that $\Lambda$ is \emph{cofinal} if, for
every $x \in \Lambda^\infty$ and every $v \in \Lambda^0$, there exists $n \in \NN^k$ such
that $v \Lambda x(n) \not= \emptyset$. We say that $\Lambda$ is \emph{aperiodic} if it
satisfies the ``aperiodicity condition" of \cite[Definition~4.3]{KP2000}: for every $v
\in \Lambda^0$ there exists $x \in \Lambda^\infty$ with $r(x) = v$ and $\shift^m(x) \not=
\shift^n(x)$ whenever $m \not= n$.

Given a $k$-graph $\Lambda$, the group of all $2$-cocycles on $\Lambda$ (as described
above) is denoted $\Zcat2(\Lambda, \TT)$. Let $\Lambda$ be a row-finite $k$-graph with no
sources, and fix $c \in \Zcat2(\Lambda,\TT)$. A Cuntz--Krieger $(\Lambda,c)$-family in a
$C^*$-algebra $B$ is a function $t : \lambda \mapsto t_\lambda$ from $\Lambda$ to $B$
such that
\begin{itemize}
\item[(CK1)] $\{t_v : v \in \Lambda^0\}$ is a collection of
    mutually orthogonal projections;
\item[(CK2)] $t_\mu t_\nu = c(\mu,\nu)t_{\mu\nu}$ whenever
    $s(\mu) = r(\nu)$;
\item[(CK3)] $t^*_\lambda t_\lambda = t_{s(\lambda)}$ for all $\lambda \in \Lambda$; and
\item[(CK4)] $t_v = \sum_{\lambda \in v\Lambda^n} t_\lambda t^*_\lambda$ for all $v \in \Lambda^0$ and $n \in \NN^k$.
\end{itemize}

$C^*(\Lambda, c)$ is then defined to be the universal $C^*$-algebra generated by a
Cuntz--Krieger $(\Lambda,c)$-family (see \cite[Notation 5.4]{KPS4}).

\subsection{Groupoids}

A groupoid is a small category $\Gg$ with inverses. We use standard groupoid notation as
in, for example, \cite{Ren}. So $\Gg^{(0)}$ is the set of identity morphisms of $\Gg$,
called the unit space, and $\Gg^{(2)}$ denotes the set $\Gg^{*2}$ of composable pairs in
$\Gg$. The groupoid $\Gg$ is an \'etale Hausdorff groupoid if it has a locally compact
Hausdorff topology under which all operations in $\Gg$ are continuous (when $\Gg^{(2)}
\subseteq \Gg \times \Gg$ is given the relative topology) and the range and source maps
$r,s : \Gg \to \Gg^{(0)}$ are local homeomorphisms. It then makes sense to talk about
continuous cocycles on $\Gg$. We write $Z^2(\Gg, \TT)$ for the group of continuous
$\TT$-valued 2-cocycles on $\Gg$ and say that two continuous 2-cocycles are cohomologous
if they differ by a continuous 2-coboundary---that is, the coboundary $\delta^1b$
associated to a continuous map $b : \Gg \to \TT$ such that $b|_{\Gg^{(0)}} \equiv 1$. A
$1$-cocycle on $\Gg$ with values in a group $G$ is a map $\rho : \Gg \to G$ that carries
composition in $\Gg$ to the group operation in $G$. Given $u \in \Gg^{(0)}$ we write
$\Gg^u$ for $\{\gamma \in \Gg : r(\gamma) = u\}$, $\Gg_u$ for $\{\gamma \in \Gg :
s(\gamma) = u\}$ and $\Gg^u_u = \Gg^u \cap \Gg_u$. The \emph{isotropy} of $\Gg$ is the
set $\bigcup_{u \in \Gg^{(0)}} \Gg^u_u$ of elements of $\Gg$ whose range and source
coincide. A groupoid is \emph{minimal} if $r(\Gg_u)$ is dense in $\Gg^{(0)}$ for every
unit $u \in \Gg^{(0)}$. It is \emph{topologically principal} if $\{u \in \Gg^{(0)} :
\Gg^u_u = \{u\}\}$ is dense in $\Gg^{(0)}$.

It will be important later to know that the interior of the isotropy in the groupoid
associated to a cofinal $k$-graph is closed. This will follow from the following fairly
general result.

\begin{prop}\label{prp:intiso closed}
Let $\Gg$ be an \'etale groupoid, let $G$ be a countable discrete abelian group, and let
$c \in Z^1(\Gg, G)$. Suppose that $\Gg$ is minimal and that for all $x$, the restriction
of $c$ to $\Gg^x_x$ is injective. Let $\Ii$ denote the interior of the isotropy of $\Gg$.
For $x,y \in \Gg^{(0)}$, we have $c(\Ii \cap \Gg^x_x) = c(\Ii \cap \Gg^y_y)$. The set $H$
defined by $H := c(\Ii \cap \Gg^x_x)$ for any $x \in \Gg^{(0)}$ is a subgroup of $G$, and
$s \times c$ induces an isomorphism from $\Ii$ to $\Gg^{(0)} \times H$. In particular the
interior of the isotropy of $\Gg$ is closed.
\end{prop}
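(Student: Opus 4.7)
The plan has four stages. (1) Prove that conjugation in $\Gg$ preserves the open set $\Ii$. (2) Combine this with the cocycle identity in the abelian group $G$ and minimality to pin down which $g \in G$ are attained by $c$ on $\Ii$. (3) Assemble the isomorphism $s \times c$. (4) Deduce closedness from realising $\Ii$ as an intersection of two closed sets. I expect stage (1) to be the main obstacle, since everything else is essentially bookkeeping once it is in hand.

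For (1): given $\eta \in \Ii$ and $\gamma \in \Gg$ with $r(\gamma) = r(\eta) = s(\eta)$, I would pick open bisections $U \subseteq \Ii$ containing $\eta$ and $B$ containing $\gamma$, and shrink so that $r(B) = r(U)$. For each $\beta \in B$, let $\alpha_\beta \in U$ be the unique element with $r(\alpha_\beta) = r(\beta)$; then $\beta \mapsto \beta^{-1} \alpha_\beta \beta$ is a continuous local section of $s$ over $s(B)$, so its image is an open bisection in $\Gg$. That image lies in the isotropy (each value has range equal to source equal to $s(\beta)$) and contains $\gamma^{-1}\eta\gamma$, so $\gamma^{-1}\eta\gamma \in \Ii$. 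Because $c$ is a $1$-cocycle and $G$ is abelian, $c(\gamma^{-1}\eta\gamma) = c(\gamma^{-1}) + c(\eta) + c(\gamma) = c(\eta)$, so conjugation preserves each level set $\Ii_g := \Ii \cap c^{-1}(g)$.

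For (2): the set $V_g := r(\Ii_g)$ is open in $\Gg^{(0)}$ (since $r$ is open on an \'etale groupoid) and, by the preceding step, is invariant under the orbit equivalence. Minimality of $\Gg$ then forces $V_g \in \{\emptyset, \Gg^{(0)}\}$, so that $H := \{g \in G : V_g = \Gg^{(0)}\}$ satisfies $c(\Ii \cap \Gg^x_x) = H$ for every $x$; this is the first assertion. Since $\Ii$ is closed under inversion and under composition of composable pairs---both via routine open-bisection arguments---and $c$ is a $1$-cocycle, $H$ is a subgroup of $G$.

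For (3) and (4): the map $s \times c \colon \Ii \to \Gg^{(0)} \times H$ (with $H$ discrete) is continuous, injective by the assumed injectivity of $c|_{\Gg^x_x}$ for each $x$, and surjective by the previous stage; it is open because any $\eta \in \Ii$ has an open-bisection neighbourhood $U \subseteq \Ii$ on which the locally constant $c$ takes a single value $g$, and then $(s \times c)(U) = s(U) \times \{g\}$ is open in $\Gg^{(0)} \times H$. Hence $s \times c$ is a topological group-bundle isomorphism. Finally, because $\Gg$ is Hausdorff and \'etale, the full isotropy $\{\eta \in \Gg : r(\eta) = s(\eta)\}$ is closed, and $c^{-1}(H)$ is closed because $H$ is clopen in the discrete group $G$. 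The inclusion $\Ii \subseteq \operatorname{Iso}(\Gg) \cap c^{-1}(H)$ is immediate; conversely any $\eta$ with $r(\eta) = s(\eta) = x$ and $c(\eta) \in H$ lies in the same fibre $\Gg^x_x$ as some $\eta' \in \Ii$ with $c(\eta') = c(\eta)$, and injectivity of $c|_{\Gg^x_x}$ gives $\eta = \eta' \in \Ii$. So $\Ii$ is closed.
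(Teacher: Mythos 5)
Your proof is correct and follows essentially the same route as the paper's: minimality plus conjugation-invariance of $\Ii\cap c^{-1}(g)$ shows that $c(\Ii\cap\Gg^x_x)$ is independent of $x$, and closedness then follows from the identification $\Ii = \operatorname{Iso}(\Gg)\cap c^{-1}(H)$. If anything you are slightly more thorough: your stage~(1) bisection argument that conjugation preserves the interior of the isotropy, and your verification that injectivity of $c|_{\Gg^x_x}$ gives the reverse inclusion $\operatorname{Iso}(\Gg)\cap c^{-1}(H)\subseteq\Ii$, are both steps the paper uses implicitly without spelling out.
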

\begin{proof}
For $x \in \Gg^{(0)}$ set $\Ii_x := \Ii \cap \Gg^x_x$ and note that $H_x := c(\Ii_x)$ is
a subgroup of $G$. Fix $x, y \in \Gg^{(0)}$; we prove that $H_x = H_y$. By symmetry it
suffices to show that $H_x \subset H_y$, so we fix $h \in H_x$ and prove that $h \in
H_y$. Fix $\alpha \in \Ii_x$ such that $c(\alpha) = h$. Since $G$ is discrete and $c$ is
continuous, there is an open neighbourhood $U$ of $\alpha$ such that $U \subseteq \Ii
\cap c^{-1}(h)$. Since $c$ is injective on each $\Gg^x_x$, this $U$ is a bisection. Since
$\Gg$ is minimal, the set $s(\Gg^y)$ is dense in $\Gg^{(0)}$, and so there exists $\gamma
\in \Gg^y$ such that $s(\gamma) \in s(U)$. The unique element $\beta$ of $\Ii_{s(\gamma)}
\cap U$ satisfies $c(\beta) = h$, and then $\gamma \beta \gamma^{-1} \in \Ii_y$ satisfies
$c(\gamma\beta\gamma^{-1}) = c(\gamma) c(\beta) \overline{c(\gamma)} = h$. So $h \in H_y$
as required.

Thus the map $s \times c$ yields an isomorphism from the interior of the isotropy of
$\Gg$ to $\Gg^{(0)} \times H$.   Since $\Ii$ is the intersection of the closed set
$c^{-1}(H)$ with the isotropy of $\Gg$, which is also closed, we deduce that $\Ii$ is
closed.
\end{proof}

Given  an \'etale groupoid $\Gg$ and a 2-cocycle $\sigma \in Z^2(\Gg, \TT)$, it is
straightforward to check that $C_c(\Gg)$ is a *-algebra under the operations
\[
(fg)(\gamma) = \sum_{\eta\zeta = \gamma}
\sigma(\eta,\zeta)f(\eta)g(\zeta)
\qquad\text{ and }\qquad
f^*(\gamma) = \overline{\sigma(\gamma, \gamma^{-1})} \overline{f(\gamma^{-1})}
\]
for $f, g \in C_c(\Gg)$.
The twisted groupoid $C^*$-algebra  $C^*(\Gg, \sigma)$ is then defined to be the closure
of $C_c(\Gg)$ under the maximal $C^*$-norm (see \cite{Ren} for more details).

\subsection{\texorpdfstring{$k$}{k}-graph groupoids}\label{sec:kgrgpds}

Following \cite[Definition 2.7]{KP2000} we associate a groupoid $\Gg_\Lambda$ to each
row-finite $k$-graph $\Lambda$ with no sources by putting
\[
\Gg_\Lambda :=
    \{(x, l - m, y) \in \Lambda^\infty \times \ZZ^k \times \Lambda^\infty : l, m \in \NN^k, \shift^l x = \shift^m y\}.
\]
For $\mu, \nu \in \Lambda$ with $s(\mu) = s(\nu)$ define $Z(\mu, \nu) \subset
\Gg_\Lambda$ by
\[
Z(\mu, \nu) := \{ (\mu x, d(\mu) - d(\nu), \nu x) : x \in \Lambda^\infty, r(x) = s(\mu)\}.
\]
For $\lambda \in \Lambda$, we define $Z(\lambda) := Z(\lambda, \lambda)$.

The sets $Z(\mu, \nu)$ form a basis of compact open sets for a locally compact Hausdorff
topology on $\Gg_\Lambda$ under which it is an \'{e}tale groupoid \noindent with
structure maps $r (x,l-m,y) = (x,0,x)$, $s ( x , l-m,y)= (y,0,y)$, and $(x,l-m,y)
(y,p-q,z) = (x , l-m+p-q,z)$. (see \cite[Proposition 2.8]{KP2000}). The $Z(\lambda)$ are
then a basis for the relative topology on $\Gg_\Lambda^{(0)} \subseteq \Gg_\Lambda$. We
identify $\Gg_\Lambda^{(0)} = \{ (x,0,x) : x \in \Lambda^\infty \}$ with
$\Lambda^\infty$. Following \cite[Proposition 2.8, Corollary 3.5]{KP2000} $\Gg_\Lambda$
is an amenable \'etale groupoid. Moreover $\Gg_\Lambda$ is minimal if and only if
$\Lambda$ is cofinal \cite[Proof of Proposition~4.8]{KP2000}.

Suppose now that $\Lambda$ is cofinal. As in \cite{CKSS}, we define a relation on
$\Lambda$ by $\mu \sim \nu$ if and only if $s( \mu ) = s( \nu )$ and $\mu x = \nu x$ for
all $x \in s(\mu) \Lambda^\infty$. This is an equivalence relation on $\Lambda$ which
respects range, source and composition. By \cite[Theorem 4.2(1)]{CKSS}, the set $\Per(
\Lambda ) := \{ d( \mu ) - d( \nu ) : \mu , \nu \in \Lambda \text{ and } \mu \sim \nu \}
\subseteq \ZZ^k$ is a subgroup of $\ZZ^k$. Since $\Lambda$ is cofinal,
\cite[Lemma~4.6]{CKSS} gives
\[
\Per(\Lambda) \subseteq \{m \in \ZZ^k : (x, m, x) \in \Gg_\Lambda\text{ for all } x \in \Lambda^\infty\}.
\]
Since $\Per(\Lambda)$ is a subgroup of $\ZZ^k$, it is also a finitely generated free
abelian group and so $\Per(\Lambda) \cong \ZZ^l$ for some integer $l \le k$.

\begin{cor}\label{cor:ILambda}
Let $\Lambda$ be a cofinal row-finite $k$-graph with no sources. Let $\Ii_\Lambda$ denote
the interior of the isotropy in $\Gg_\Lambda$. Then $\Ii_\Lambda$ is closed and
\[
\Ii_\Lambda = \{(x, m , x) : x \in \Lambda^\infty, m \in \Per(\Lambda)\} \cong \Lambda^\infty \times \Per(\Lambda).
\]
Moreover, $\Hh_\Lambda := \Gg_\Lambda/\Ii_\Lambda$ is
an amenable, topologically principal, locally compact, Hausdorff, \'etale groupoid.
\end{cor}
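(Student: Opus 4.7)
The plan is to deduce the description of $\Ii_\Lambda$ directly from Proposition~\ref{prp:intiso closed}, and then to verify the claimed properties of the quotient $\Hh_\Lambda$ separately.

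I will first apply Proposition~\ref{prp:intiso closed} to the continuous $\ZZ^k$-valued 1-cocycle $c : \Gg_\Lambda \to \ZZ^k$ given by $c(x, n, y) = n$. Since $\Lambda$ is cofinal, $\Gg_\Lambda$ is minimal as recalled above, and $c$ is trivially injective on each isotropy group $\Gg^x_x$: an element $(x, n, x) \in \Gg^x_x$ is determined by $x$ together with $n = c(x, n, x)$. Proposition~\ref{prp:intiso closed} then tells me that $\Ii_\Lambda$ is closed and that $s \times c$ induces an isomorphism $\Ii_\Lambda \cong \Lambda^\infty \times H$, where $H := c(\Ii_\Lambda \cap \Gg^x_x)$ is a subgroup of $\ZZ^k$ that does not depend on $x$.

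The next step is to identify $H$ with $\Per(\Lambda)$. For $\Per(\Lambda) \subseteq H$, if $m = d(\mu) - d(\nu)$ with $\mu \sim \nu$, then $Z(\mu, \nu)$ is an open set contained in the isotropy of $\Gg_\Lambda$ (because $\mu y = \nu y$ for every $y \in s(\mu)\Lambda^\infty$), hence contained in $\Ii_\Lambda$, and $c$ takes the constant value $m$ there, so $m \in H$. For the reverse inclusion, I take $m \in H$ and any $x$ with $(x, m, x) \in \Ii_\Lambda$; since the sets $Z(\mu, \nu)$ form a basis, I can find $\mu, \nu$ with $d(\mu) - d(\nu) = m$ and $(x, m, x) \in Z(\mu, \nu) \subseteq \Ii_\Lambda$. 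The last containment forces $\mu y = \nu y$ for every $y \in s(\mu)\Lambda^\infty$, so $\mu \sim \nu$ and $m \in \Per(\Lambda)$.

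For the quotient $\Hh_\Lambda$: because $\Ii_\Lambda$ is an open normal subgroupoid of $\Gg_\Lambda$ consisting entirely of isotropy, its unit space coincides with $\Gg_\Lambda^{(0)}$, the quotient map is open, and the \'etale structure descends to $\Hh_\Lambda$. Since $\Ii_\Lambda$ is also closed, the associated orbit equivalence relation on $\Gg_\Lambda$ is closed, which gives the Hausdorff property; local compactness is then automatic. Amenability of $\Hh_\Lambda$ follows from amenability of $\Gg_\Lambda$ together with amenability of $\Ii_\Lambda \cong \Lambda^\infty \times \Per(\Lambda)$ via the standard permanence results for extensions of \'etale groupoids. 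The remaining assertion, topological principality, is the main obstacle I anticipate: a class $[u] \in \Hh_\Lambda^{(0)}$ has trivial isotropy in $\Hh_\Lambda$ exactly when $\Gg_\Lambda^u_u = \{(u, m, u) : m \in \Per(\Lambda)\}$, so what I must prove is that the set of such $u$ is dense in $\Lambda^\infty$. I plan to invoke the structural results on $\Per(\Lambda)$ from \cite{CKSS}, which characterise $\Per(\Lambda)$ as the common value of $\{l - m : \shift^l x = \shift^m x\}$ on a suitably rich set of paths $x$; combined with shift-invariance and cofinality, this will propagate the property to a dense subset of $\Lambda^\infty$.
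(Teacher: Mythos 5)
Your treatment of the first half is essentially the paper's: you apply Proposition~\ref{prp:intiso closed} to the canonical cocycle $c(x,n,y)=n$ to get closedness and the isomorphism $\Ii_\Lambda \cong \Lambda^\infty \times H$, and your identification of $H$ with $\Per(\Lambda)$ via basic open sets $Z(\mu,\nu)$ contained in the isotropy is correct (and slightly more explicit than the paper's). The issues are in the second half.

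First, amenability. You assert that amenability of $\Hh_\Lambda$ ``follows from amenability of $\Gg_\Lambda$ together with amenability of $\Ii_\Lambda$ via the standard permanence results for extensions.'' Those permanence results run in the opposite direction: they deduce amenability of the \emph{total} groupoid from amenability of a kernel and of the quotient (or of the range group of a cocycle). There is no standard result letting you push amenability of $\Gg_\Lambda$ \emph{down} to the quotient $\Gg_\Lambda/\Ii_\Lambda$, and this is precisely why the paper does not argue that way. Instead it constructs a continuous cocycle $\tilde{c} \in Z^1(\Hh_\Lambda, \ZZ^k/\Per(\Lambda))$ induced by $c$, identifies its kernel $\tilde{c}^{-1}(0)$ with $c^{-1}(0)$ (which is amenable by \cite[Lemma~6.7]{Yeend:jot07}), and then applies \cite[Proposition 9.3]{Speilberg:TAMS14} using that $\ZZ^k/\Per(\Lambda)$ is abelian, hence amenable. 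You need this (or some genuine substitute), not an appeal to extension permanence.

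Second, topological principality. You correctly reduce it to density of $\{u : \Gg^u_u = \{(u,m,u) : m \in \Per(\Lambda)\}\}$, but you only outline a plan to prove that density via the structure theory of \cite{CKSS}, and the propagation step (``shift-invariance and cofinality will propagate the property to a dense subset'') is exactly the nontrivial content you would have to supply. The paper avoids this entirely with a short Baire-category argument packaged in \cite[Proposition~3.6]{Renault:imsb08}: since $\Ii_\Lambda$ is the \emph{whole} interior of the isotropy of $\Gg_\Lambda$, and the isotropy of $\Hh_\Lambda$ pulls back under the (open) quotient map to the isotropy of $\Gg_\Lambda$, the interior of the isotropy of $\Hh_\Lambda$ is reduced to the unit space; for a second-countable \'etale groupoid this is equivalent to being topologically principal. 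I recommend replacing your sketch with this argument; your direct route may well be completable, but as written it is a gap, and it amounts to reproving a special case of Renault's proposition.
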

\begin{proof}
Note that $\Gg_\Lambda$ is a minimal \'etale groupoid and the restriction of the
canonical cocycle $c \in Z^1(\Gg_\Lambda, \ZZ^k)$, given by $c(x,n,y) = n$ to
$(\Gg_\Lambda)_x^x$ is injective for each $x \in \Lambda^\infty$. Hence by
Proposition~\ref{prp:intiso closed} $\Ii_\Lambda$ is closed.

By definition of the topology on $\Gg_\Lambda$, the set $\{(x, m , x) : x \in
\Lambda^\infty, m \in \Per(\Lambda)\}$ is contained in  $\Ii_\Lambda$, the interior of
the isotropy of $\Gg_\Lambda$. Conversely, if $\alpha \in \Ii_\Lambda$, then $\alpha =
(x,m,x)$ for some $x \in \Lambda^\infty$ and $m \in \ZZ^k$, and
Proposition~\ref{prp:intiso closed} applied to the cocycle $c$ of the preceding paragraph
shows that $(y,m,y) \in \Ii_\Lambda$ for every $y$. In particular $m \in \Per(\Lambda)$.
So $\Ii_\Lambda = \{(x, m , x) : x \in \Lambda^\infty, m \in \Per(\Lambda)\}$ as claimed.

It is routine to check that $\Gg_\Lambda/\Ii_\Lambda$ is a locally compact Hausdorff
\'etale groupoid (see, for example, \cite[Proposition~2.5]{SimWil:xxxx}). Since
$c(\Ii_\Lambda) = \Per(\Lambda)$, there exists $\tilde{c} \in Z^1(\Hh_\Lambda,
\ZZ^k/\Per(\Lambda))$ such that $\tilde{c}([(x, n, y)]) = n + \Per(\Lambda)$. The
groupoid $\tilde{c}^{-1}(0)$ is isomorphic to $c^{-1}(0)$ which is amenable by, for
example, \cite[Lemma~6.7]{Yeend:jot07}. Since $\ZZ^k/\Per(\Lambda)$ is abelian and hence
amenable, it follows from \cite[Proposition 9.3]{Speilberg:TAMS14} that $\Hh_\Lambda$ is
amenable. By construction of $\Ii_\Lambda$, the interior of the isotropy of $\Hh_\Lambda$
is trivial, and therefore $\Hh_\Lambda$ is topologically principal by, for example,
\cite[Proposition~3.6]{Renault:imsb08}.
\end{proof}

We frequently identify $\Ii_\Lambda$ with $\Lambda^\infty \times \Per(\Lambda)$ as in
Corollary~\ref{cor:ILambda}.

Let $\Lambda \mathbin{{_s*_s}} \Lambda := \{(\mu,\nu) \in \Lambda \times \Lambda : s(\mu)
= s(\nu)\}$. Lemma~6.6 of \cite{KPS4} shows that there exists $\Pp \subseteq \Lambda
\mathbin{{_s*_s}} \Lambda$ such that
\begin{equation}\label{eq:Pproperties}
    (\lambda, s(\lambda)) \in \Pp\text{ for all $\lambda$}\quad\text{ and }\quad
        \Gg_\Lambda = \bigsqcup_{(\mu,\nu) \in \Pp} Z(\mu,\nu).
\end{equation}
Given such a set $\Pp$, for $g \in \Gg_\Lambda$ we write $(\mu_g, \nu_g)$ for the element
of $\Pp$ with $g \in Z(\mu_g,\nu_g)$. Fix $c \in \Zcat2(\Lambda, \TT)$. Let $\tilde{d} :
\Gg_\Lambda \to \ZZ^k$ be the canonical 1-cocycle $\tilde{d}(x,n,y) = n$. Lemma~6.3 of
\cite{KPS4} shows that for composable $g,h \in \Gg_\Lambda$, there exist $\alpha, \beta,
\gamma \in \Lambda$ and $y \in \Lambda^\infty$ such that
\begin{equation}\label{eq:alphabetagamma}
\begin{split}
&\nu_g \alpha = \mu_h \beta,\ \; \mu_g\alpha = \mu_{gh} \gamma
    \;\text{ and }\; \nu_h\beta = \nu_{gh}\gamma; \text{ and} \\
g = (\mu_g\alpha y, \tilde{d}(g), &\nu_g\alpha y),\ \;
    h = (\mu_h\beta y, \tilde{d}(h), \nu_h\beta y)\;\text{ and }\;
    gh = (\mu_{gh}\gamma y, \tilde{d}(gh), \nu_{gh}\gamma y).
\end{split}
\end{equation}
The formula
\begin{equation}\label{eq:sigmac}
 \sigma_c(g,h)
    = c(\mu_g, \alpha)\overline{c(\nu_g, \alpha)}
        c(\mu_h, \beta)\overline{c(\nu_h, \beta)}
        \overline{c(\mu_{gh},\gamma)}c(\nu_{gh},\gamma)
\end{equation}
does not depend on the choice of $\alpha,\beta,\gamma$, and determines a continuous
groupoid $2$-cocycle on $\Gg_\Lambda$. If $\sigma'_c$ is obtained from $c$ in the same
way with respect to another collection $\Pp'$, then $\sigma_c$ and $\sigma'_c$ are
cohomologous. Corollary~7.7 of \cite{KPS4} shows that there is an isomorphism
$C^*(\Lambda, c) \cong C^*(\Gg_\Lambda, \sigma_c)$ that carries each $s_\lambda$ to the
characteristic function $1_{Z(\lambda, s(\lambda))}$.

\section{An action of the \texorpdfstring{$k$}{k}-graph groupoid associated to a
\texorpdfstring{$k$}{k}-graph 2-cocycle}\label{sec:action and main}

We consider a row-finite $k$-graph $\Lambda$ with no sources. Lemma~7.2 of \cite{SWW}
says that if $\Lambda$ is not cofinal, then $C^*(\Lambda, c)$ is nonsimple for every $c
\in \Zcat2(\Lambda, \TT)$. Since we are interested here in when $C^*(\Lambda, c)$
\emph{is} simple, we shall therefore assume henceforth that $\Lambda$ is cofinal.

Recall that if $\sigma$ is a continuous $\TT$-valued 2-cocycle on a groupoid $\Gg$, then
there is a groupoid extension $\Gg^{(0)} \times \TT \to \Gg \times_\sigma \TT \to \Gg$,
where $\Gg \times_\sigma \TT$ is equal to $\Gg \times \TT$ as a set, with unit space
$(\Gg \times_\sigma \TT)^{(0)} = \Gg^{(0)} \times \{ 1 \}$, range map $r(g, z) = (r(g),
1)$, source map $s(g, z) = (s(g), 1)$ and operations
\[
(\alpha, w)(\beta, z) = (\alpha\beta, \sigma(\alpha, \beta)zw)\quad\text{and}\quad
(\alpha, w)^{-1} = (\alpha^{-1}, \overline{\sigma(\alpha, \alpha^{-1})} \overline{w}).
\]
Given $\sigma \in  Z^2(\Gg_\Lambda, \TT)$ we write $\Ii_\Lambda \times_\sigma \TT$ for
$\Ii_\Lambda \times \TT$ regarded as a subgroupoid of $\Gg_\Lambda \times_\sigma \TT$. We
often implicitly identify $(\Gg_\Lambda \times_\sigma \TT)^{(0)}$ with
$\Gg_\Lambda^{(0)}$.

\begin{prop}\label{prp:constant in Ii}
Let $\Lambda$ be a cofinal row-finite $k$-graph with no sources, and take $c \in
\Zcat2(\Lambda, \TT)$. Fix $\Pp \subseteq \Lambda \mathbin{{_s*_s}} \Lambda$ as
in~\eqref{eq:Pproperties}, and let $\sigma_c \in Z^2(\Gg_\Lambda, \TT)$ be as
in~\eqref{eq:sigmac}. For $x \in \Lambda^\infty$, define $\sigma_c^x \in
Z^2(\Per(\Lambda), \TT)$ by $\sigma^x_c(p,q) = \sigma_c((x,p,x),(x,q,x))$. Then there is
a bicharacter $\omega$ of $\Per(\Lambda)$ such that $\sigma^x_c$ is cohomologous to
$\omega$ for every $x \in \Lambda^\infty$. For any such bicharacter $\omega$, there
exists a cocycle $\sigma \in Z^2(\Gg_\Lambda, \TT)$ such that $\sigma$ is cohomologous to
$\sigma_c$ and $\sigma|_{\Ii_\Lambda} = \omega \times 1_{\Lambda^\infty}$.
\end{prop}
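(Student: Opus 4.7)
The argument splits into two parts: (A) producing a bicharacter $\omega$ with $[\sigma_c^x]=[\omega]$ in $H^2(\Per(\Lambda),\TT)$ for every $x \in \Lambda^\infty$, and (B) finding a continuous $\tilde b:\Gg_\Lambda\to\TT$ with $\tilde b|_{\Gg_\Lambda^{(0)}}\equiv 1$ such that $\sigma:=\sigma_c\cdot\delta^1\tilde b$ satisfies $\sigma|_{\Ii_\Lambda}=\omega\times 1_{\Lambda^\infty}$.

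For (A), I first show that $[\sigma_c^x]$ is independent of $x$. Under the identification $\Ii_\Lambda\cong \Lambda^\infty\times\Per(\Lambda)$ of Corollary~\ref{cor:ILambda}, conjugation by $\gamma=(x,m,y)\in\Gg_\Lambda$ sends $(y,p,y)\mapsto(x,p,x)$ and hence restricts to the identity on $\Per(\Lambda)$. Lifting to conjugation by $(\gamma,1)$ inside $\Gg_\Lambda\times_{\sigma_c}\TT$ produces an isomorphism between the central $\TT$-extensions of $\Per(\Lambda)$ defined by $\sigma_c^x$ and $\sigma_c^y$, so $[\sigma_c^x]=[\sigma_c^y]$ whenever $x$ and $y$ lie in a common $\Gg_\Lambda$-orbit. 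By Kleppner's theorem, $H^2(\Per(\Lambda),\TT)$ can be identified with the Hausdorff compact group of alternating bicharacters on $\Per(\Lambda)\cong\ZZ^l$, namely $\TT^{l(l-1)/2}$. The map $x\mapsto[\sigma_c^x]$ is continuous in this target, because pointwise $x\mapsto\sigma_c^x(p,q)$ is continuous and $\Per(\Lambda)$ is discrete. Cofinality of $\Lambda$ makes $\Gg_\Lambda$-orbits dense, so the class is globally constant, and one more application of Kleppner's theorem provides a bicharacter $\omega$ in this class.

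For (B), fix such an $\omega$. Since $\Ii_\Lambda$ is clopen in $\Gg_\Lambda$ by Corollary~\ref{cor:ILambda}, I can set $\tilde b\equiv 1$ on $\Gg_\Lambda\setminus\Ii_\Lambda$ without destroying continuity, so the task reduces to defining $\tilde b(x,p)=:d^x(p)$ on $\Ii_\Lambda\cong\Lambda^\infty\times\Per(\Lambda)$ so that $\sigma_c^x\cdot\delta^1 d^x=\omega$, i.e.\ $\delta^1 d^x=\omega/\sigma_c^x$, with $d^x(0)=1$. Fix a basis $f_1,\dots,f_l$ of $\Per(\Lambda)\cong\ZZ^l$; normalise $d^x(f_i)=1$; and extend $d^x$ along a chosen ordering of basis additions by $d^x(p+f_i)=d^x(p)\,\sigma_c^x(p,f_i)/\omega(p,f_i)$, with the analogous formula $d^x(p)d^x(-p)=(\omega/\sigma_c^x)(p,-p)$ handling negative coordinates.

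The main obstacle is the consistency of this inductive definition: different orderings of basis additions must yield the same value of $d^x(p)$, and the resulting $d^x$ must actually satisfy $\delta^1 d^x=\omega/\sigma_c^x$ on all pairs, not merely those appearing in the recursion. At the pair $(f_i,f_j)$ consistency reduces to $\sigma_c^x(f_i,f_j)/\sigma_c^x(f_j,f_i)=\omega(f_i,f_j)/\omega(f_j,f_i)$, which is exactly equality of skew-symmetrisations and is guaranteed by (A) via Kleppner's theorem; the general case follows by induction using the 2-cocycle identity for $\omega/\sigma_c^x$. Because $d^x(p)$ is then a finite product of values of $\sigma_c^x$ and $\omega$, $x\mapsto d^x(p)$ is continuous for each fixed $p$, so $\tilde b$ is continuous on $\Ii_\Lambda$ (which has discrete second factor) and hence on $\Gg_\Lambda$. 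The calculation $\sigma|_{\Ii_\Lambda}((x,p,x),(x,q,x))=\sigma_c^x(p,q)\,\delta^1 d^x(p,q)=\omega(p,q)$ then completes the proof.
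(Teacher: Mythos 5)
Your proof is correct and follows essentially the same route as the paper's: constancy of $[\sigma_c^x]$ is obtained by conjugation in the extension groupoid $\Gg_\Lambda\times_{\sigma_c}\TT$ together with continuity in $x$ and density of orbits (cofinality), and the trivializing cochain is built generator-by-generator on $\Ii_\Lambda\cong\Lambda^\infty\times\Per(\Lambda)$, continuously in $x$, and extended by $1$ off this clopen set. The only cosmetic difference is in part (B): the paper fixes the nested ordering $m\in\langle g_j : j\le i\rangle$ and verifies $\delta^1 b_x=\tilde c_x$ by comparison with a character-normalized known solution, whereas you check path-independence and the full coboundary identity directly from the symmetry of $\omega\overline{\sigma_c^x}$ (equality of skew-symmetrisations, supplied by part (A)) and the $2$-cocycle identity; both arguments are valid.
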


To prove the proposition, we first prove some lemmas.

\begin{lem}\label{lem:r-alpha}
Let $\Lambda$ be a row-finite $k$-graph with no sources. Take $\sigma \in
Z^2(\Gg_\Lambda, \TT)$. For $\alpha = (x,m,y) \in \Gg_\Lambda$ and $p \in \Per(\Lambda)$,
define $r^\sigma_\alpha : \Per(\Lambda) \to \TT$ by
\begin{equation}\label{eq:conjugation}
r^\sigma_\alpha(p)	
	= \sigma\big(\alpha,(y,p,y)\big)\sigma\big((x,m+p,y), \alpha^{-1}\big)
		\overline{\sigma\big(\alpha,\alpha^{-1}\big)}.
\end{equation}
For $x \in \Lambda^\infty$ define $\sigma_x \in Z^2(\Per(\Lambda), \TT)$ by
$\sigma_x(p,q) = \sigma((x,p,x), (x,q,x))$. For each $p \in \Per(\Lambda)$, the map
$r^\sigma_{\cdot}(p) : \Gg_\Lambda \to \TT$ is continuous, and we have
\begin{equation}\label{eq:rsigmaformula}
r^\sigma_\alpha(p+q)
    = \sigma_{r(\alpha)}(p,q)\overline{\sigma_{s(\alpha)}(p,q)} r^\sigma_\alpha(p) r^\sigma_\alpha(q).
\end{equation}
If $\sigma_x = \sigma_y$ for all $x,y$, then $\alpha \mapsto r^\sigma_\alpha$ is a
continuous $\Per(\Lambda)\widehat{\;}$-valued $1$-cocycle on $\Gg_\Lambda$.
\end{lem}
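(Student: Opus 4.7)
The key observation is that~\eqref{eq:conjugation} encodes conjugation in the central extension $\Gg_\Lambda \times_\sigma \TT$: for $\alpha = (x, m, y)$ and $p \in \Per(\Lambda)$, a direct calculation using the stated multiplication and inversion rules gives
\[
(\alpha, 1) \cdot \big((y, p, y),\, 1\big) \cdot (\alpha, 1)^{-1} \;=\; \big((x, p, x),\, r^\sigma_\alpha(p)\big).
\]
I would use this identity throughout, together with the fact that the central subbundle $\Gg_\Lambda^{(0)} \times \TT$ is transported unchanged by conjugation (i.e.\ a scalar factor at one unit is moved to the same scalar factor at the new unit), to organise all three assertions.

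For continuity of $\alpha \mapsto r^\sigma_\alpha(p)$, Corollary~\ref{cor:ILambda} supplies a continuous map $(x, p) \mapsto (x, p, x)$ from $\Lambda^\infty \times \Per(\Lambda)$ into $\Gg_\Lambda$; composing with the continuous source and range maps, each factor of $\sigma$ on the right of~\eqref{eq:conjugation} becomes a continuous function of $\alpha$, so $r^\sigma_{\alpha}(p)$ depends continuously on $\alpha$. To establish~\eqref{eq:rsigmaformula}, I start from the identity $\big((y, p, y), 1\big)\big((y, q, y), 1\big) = \big((y, p+q, y),\, \sigma_y(p, q)\big)$ in the extension. Conjugating both sides by $(\alpha, 1)$ and using that the scalar $\sigma_y(p, q)$ transports unchanged to $x$ yields
\[
\big((x, p, x),\, r^\sigma_\alpha(p)\big)\big((x, q, x),\, r^\sigma_\alpha(q)\big) \;=\; \big((x, p+q, x),\, \sigma_y(p, q)\, r^\sigma_\alpha(p+q)\big).
\]
Expanding the left-hand side via the extension's multiplication rule contributes a factor of $\sigma_x(p, q)$ in the second coordinate, and comparing yields~\eqref{eq:rsigmaformula}.

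Under the final hypothesis $\sigma_x \equiv \sigma_y$, identity~\eqref{eq:rsigmaformula} reduces to $r^\sigma_\alpha(p + q) = r^\sigma_\alpha(p)\, r^\sigma_\alpha(q)$, so each $r^\sigma_\alpha$ lies in $\widehat{\Per(\Lambda)}$. For the cocycle property, observe that $(\alpha, 1)(\beta, 1) = \big(\alpha\beta,\, \sigma(\alpha, \beta)\big)$ differs from $(\alpha\beta, 1)$ only by a central scalar, so conjugation by the two elements coincides. Computing this common conjugation of $\big((s(\beta), p, s(\beta)), 1\big)$ two ways---directly via $(\alpha\beta, 1)$, which gives scalar $r^\sigma_{\alpha\beta}(p)$; and iteratively by first conjugating with $(\beta, 1)$ (producing the scalar $r^\sigma_\beta(p)$ at $s(\alpha)$) and then with $(\alpha, 1)$ (which transports $r^\sigma_\beta(p)$ unchanged to $r(\alpha)$ and multiplies by $r^\sigma_\alpha(p)$)---yields $r^\sigma_{\alpha\beta} = r^\sigma_\alpha\, r^\sigma_\beta$, and continuity of $\alpha \mapsto r^\sigma_\alpha$ into $\widehat{\Per(\Lambda)}$ follows from the pointwise continuity already established. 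The only real difficulty is the bookkeeping of the many $\sigma$-factors, and the central-extension viewpoint is precisely what keeps this transparent.
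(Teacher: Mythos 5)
Your proposal is correct and follows essentially the same route as the paper: the central-extension identity $(\alpha,1)\big((y,p,y),1\big)(\alpha,1)^{-1} = \big((x,p,x), r^\sigma_\alpha(p)\big)$ is exactly the paper's key observation, and both the derivation of~\eqref{eq:rsigmaformula} and the cocycle computation (using that $(\alpha,1)(\beta,1)$ differs from $(\alpha\beta,1)$ only by a central scalar) match the paper's argument, with continuity handled the same way from continuity of $\sigma$. The only cosmetic difference is that you conjugate the product identity $\big((y,p,y),1\big)\big((y,q,y),1\big)=\big((y,p+q,y),\sigma_y(p,q)\big)$ rather than factoring $\big((y,p+q,y),1\big)$ as the paper does, which is an equivalent rearrangement of the same computation.
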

\begin{proof}
The map $\alpha \mapsto r^\sigma_\alpha(p)$ is continuous because $\sigma$ is continuous.

A straightforward calculation in the central extension $\Gg_\Lambda \times_{\sigma} \TT$
shows that for $w,z \in \TT$, we have
\[
(\alpha, w) ((y,p,y), z)(\alpha, w)^{-1} = ((x,p,x), r^\sigma_\alpha(p)z).
\]

Computing further in the central extension, we have
\begin{align*}
((x,p+q,x), &r^\sigma_\alpha(p+q)) \\
	&= (\alpha, 1) ((y,p+q,y), 1)(\alpha, 1)^{-1}\\
	&= (\alpha, 1) ((y,p,y), \overline{\sigma_y(p,q)})(\alpha, 1)^{-1}	(\alpha, 1) ((y,q,y), 1)(\alpha, 1)^{-1}\\
	&= ((x,p,x), r^\sigma_\alpha(p)\overline{\sigma_y(p,q)})
		((x,q,x), r^\sigma_\alpha(q))\\
	&= \big((x,p+q,x), \sigma_x(p,q)\overline{\sigma_y(p,q)}
		r^\sigma_\alpha(p) r^\sigma_\alpha(q)\big),
\end{align*}
giving~\eqref{eq:rsigmaformula}.

Now suppose that $\sigma_x = \sigma_y$ for all $x,y$. Then~\eqref{eq:rsigmaformula}
implies immediately that $r^\sigma_\alpha \in \Per(\Lambda)\widehat{\;}$ for all
$\alpha$. Take $p \in \Per(\Lambda)$ and composable $\alpha,\beta \in \Gg_\Lambda$. Let
$y = s(\beta)$. Computing again in $\Gg_\Lambda \times_\sigma \TT$, we have
\begin{align*}
\big((r(\alpha), p, r(\alpha)), r^\sigma_\alpha(p)r^\sigma_\beta(p)\big)
    &= (\alpha, 1)(\beta, 1) \big((y,p,y), 1\big) (\beta, 1)^{-1} (\alpha,1)^{-1}\\
    &= (\alpha\beta, \sigma(\alpha,\beta)) \big((y,p,y), 1\big)
        (\alpha\beta, \sigma(\alpha,\beta))^{-1}\\
    &= \big((r(\alpha), p, r(\alpha)), r^\sigma_{\alpha\beta}(p)\big).
\end{align*}
So $\alpha \mapsto r^\sigma_\alpha$ is a $1$-cocycle on $\Gg_\Lambda$.
\end{proof}

Given a cocycle $\omega \in Z^2(\Per(\Lambda), \TT)$, we write $\omega^*$ for the cocycle
$(p,q) \mapsto \overline{\omega(q,p)}$. By \cite[Proposition~3.2]{OPT} (see also \cite[Lemma 7.1]{Kleppner}),
the map $\omega\omega^*$ is a bicharacter of $\Per(\Lambda)$ which is antisymmetric\footnote{In
\cite{OPT}, the word ``symplectic" is used instead of antisymmetric.} in the sense that
$(\omega\omega^*)(p,q) = \overline{(\omega\omega^*)(q,p)}$. Proposition~3.2 of \cite{OPT}
implies that $\omega \mapsto \omega\omega^*$ descends to an isomorphism of
$H^2(\Per(\Lambda), \TT)$ onto the group $X^2(\Per(\Lambda), \TT)$ of all antisymmetric
bicharacters of $\Per(\Lambda)$.

\begin{lem}\label{lem:bicharacter}
Let $\Lambda$ be a cofinal row-finite $k$-graph with no sources, and suppose $c \in
\Zcat2(\Lambda, \TT)$. Let $\sigma_c$ be a continuous cocycle on $\Gg_\Lambda$ of the
form~\eqref{eq:sigmac},  so that $C^*(\Gg_\Lambda, \sigma_c) \cong C^*(\Lambda, c)$. For
each $x \in \Lambda^\infty$, let $\sigma^x_c \in Z^2(\Per(\Lambda), \TT)$ be the cocycle
given by $\sigma^x_c(p,q) = \sigma_c((x,p,x),(x,q,x))$. Then the cohomology class of
$\sigma^x_c$ is independent of $x$.
\end{lem}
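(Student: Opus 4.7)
The plan splits into two parts: first handling $x, y$ in the same $\Gg_\Lambda$-orbit directly via Lemma~\ref{lem:r-alpha}, then extending to arbitrary pairs by a continuity-and-density argument phrased in terms of antisymmetric bicharacters. For the orbit case, suppose $\alpha \in \Gg_\Lambda$ satisfies $r(\alpha) = x$ and $s(\alpha) = y$, and set $b(p) := \overline{r^{\sigma_c}_\alpha(p)}$. Reading formula~\eqref{eq:rsigmaformula} with $\sigma = \sigma_c$ and solving for $\sigma^x_c(p,q)$ gives
\[
\sigma^x_c(p,q) = \sigma^y_c(p,q)\, b(p)\, b(q)\, \overline{b(p+q)} = \sigma^y_c(p,q) \cdot \delta^1 b(p,q),
\]
so $\overline{\sigma^y_c}\,\sigma^x_c = \delta^1 b$, witnessing that $\sigma^x_c$ and $\sigma^y_c$ are cohomologous.

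To extend beyond a single orbit, I would pass to antisymmetric bicharacters. By \cite[Proposition~3.2]{OPT}, the map $[\omega] \mapsto \omega\omega^*$ is an isomorphism $H^2(\Per(\Lambda), \TT) \cong X^2(\Per(\Lambda), \TT)$, so the cohomology class of a 2-cocycle on $\Per(\Lambda)$ is determined by its associated antisymmetric bicharacter. The preceding paragraph thus shows that for each fixed $p, q \in \Per(\Lambda)$, the function $x \mapsto \sigma^x_c(p,q)\,\overline{\sigma^x_c(q,p)}$ on $\Lambda^\infty$ is constant on each $\Gg_\Lambda$-orbit. Corollary~\ref{cor:ILambda} identifies $\Ii_\Lambda$ topologically with $\Lambda^\infty \times \Per(\Lambda)$, so $x \mapsto (x,p,x)$ is continuous from $\Lambda^\infty$ into $\Gg_\Lambda$ for each $p \in \Per(\Lambda)$; combined with continuity of $\sigma_c$, this makes $x \mapsto \sigma^x_c(p,q)\,\overline{\sigma^x_c(q,p)}$ continuous. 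Cofinality of $\Lambda$ makes $\Gg_\Lambda$ minimal, so every $\Gg_\Lambda$-orbit is dense in $\Lambda^\infty$; a continuous $\TT$-valued function that is constant on a dense set is constant, so $\sigma^x_c \cdot (\sigma^x_c)^*$ does not depend on $x$. Applying the isomorphism of \cite[Proposition~3.2]{OPT} in reverse completes the proof.

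The main obstacle is bridging from the orbitwise agreement of cohomology classes to a global statement, since the $\Gg_\Lambda$-orbit relation partitions $\Lambda^\infty$ into disjoint pieces that need not be connected by any groupoid element. The trick is to replace the cohomology class---not manifestly a continuous function of $x$---with its antisymmetric bicharacter, which is explicitly continuous in $x$ for each fixed $p, q$ and orbit-invariant by the first step; continuity plus density then promote the orbitwise constancy to global constancy.
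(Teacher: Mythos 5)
Your proof is correct and follows essentially the same route as the paper's: orbit-invariance of the cohomology class via the conjugation maps $r^{\sigma_c}_\alpha$ of Lemma~\ref{lem:r-alpha} (you extract the explicit coboundary from~\eqref{eq:rsigmaformula}, whereas the paper phrases the same computation as an isomorphism of central extensions), combined with density of orbits from minimality and the reduction to antisymmetric bicharacters via \cite[Proposition~3.2]{OPT}. The only cosmetic difference is that you use continuity of $x \mapsto \sigma^x_c(p,q)\overline{\sigma^x_c(q,p)}$ where the paper uses the slightly stronger observation that $\sigma_c$ is locally constant; either suffices to promote orbitwise constancy to global constancy.
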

\begin{proof}
The formula~\eqref{eq:sigmac} shows that $\sigma_c$ is locally constant as a function
from $\Gg_\Lambda \times \Gg_\Lambda \to \TT$. Restricting $\sigma_c$ to $\Ii_\Lambda$,
we obtain cocycles on the groups $(\Ii_\Lambda)_x = \{(x,p,x) : p \in \Per(\Lambda)\}
\cong \Per(\Lambda)$, and hence cocycles $\sigma_c^x$ on $\Per(\Lambda)$ as claimed. For
$x \in \Lambda^\infty$, let $\omega_x$ be the bicharacter of $\Per(\Lambda)$ given by
\[
\omega_x(p,q) := \sigma_c\big((x,p,x),(x,q,x)\big) \overline{\sigma_c\big((x,q,x),(x,p,x)\big)}.
\]
Fix free abelian generators $g_1, \dots, g_l$ for $\Per(\Lambda)$. Since each $\omega_x$
is a bicharacter, it is determined by the values $\omega_x(g_i, g_j)$. Fix $x \in
\Lambda^\infty$. Since $\sigma_c$ is locally constant, for each $i,j$ there is a
neighbourhood $U_{i,j}$ of $x$ such that $\sigma^x_c(g_i, g_j) = \sigma_c((x, g_i, x),
(x, g_j, x))$ is constant on $U_{i,j}$. So for $y \in U := \bigcap_{i,j} U_{i,j}$ we have
$\omega_y(g_i, g_j) = \omega_x(g_i, g_j)$ for all $i,j$, and hence $\omega_y = \omega_x$.
Now \cite[Proposition~3.2]{OPT} implies that the cohomology class (in $H^2(\Per(\Lambda),
\TT)$) of $\sigma_c^x$ is locally constant with respect to $x$. Since $\Lambda$ is
cofinal, $\Gg_\Lambda$ is minimal, and so every orbit in $\Gg_\Lambda$ is dense; so to
see that the cohomology class of $\sigma_c^x$ is globally constant, it suffices to show
that it is constant on orbits. For $x \in \Lambda^\infty$, let $A_x$ denote the subgroup
$\{((x,p,x), z) : p \in \Per(\Lambda), z \in \TT\} \subseteq (\Gg_\Lambda
\times_{\sigma_c} \TT)^x_x$, which is isomorphic to the group extension $\Per(\Lambda)
\times_{\sigma^x_c} \TT$ of $\Per(\Lambda)$ by $\TT$. Conjugation by any $\alpha$ in
$\Gg_\Lambda \times_{\sigma_c} \TT$ is an isomorphism $\Ad_\alpha : A_{s(\alpha)} \cong
A_{r(\alpha)}$. For $\gamma \in \Gg_\Lambda$, let $r^{\sigma_c}_\gamma : \Per(\Lambda)
\to \TT$ be the map of Lemma~\ref{lem:r-alpha}. Fix $\alpha = (\gamma, w) \in \Gg_\Lambda
\times_{\sigma_c} \TT$, $p\in \Per(\Lambda)$ and $z \in \TT$, and let $x = r(\gamma)$ and
$y = s(\gamma)$. A quick calculation gives $\Ad_\alpha((y,p,y), z) = ((x,p,x),
r^{\sigma_c}_{\gamma}(p)z)$. If $p = 0$, then~\eqref{eq:conjugation} collapses to give
$\Ad_\alpha((y,0,y),z) = ((x,0,x),z)$ because $\sigma_c((x,m,y),(y,0,y)) = 1$. If $q :
\Ii_\Lambda \times_{\sigma_c} \TT \to \Lambda^\infty \times \Per(\Lambda)$ is the
quotient map $((x,p,x), z) \mapsto (x,p)$, then
\[
q(\Ad_\alpha((y,p,y), z))
	= q\big((x,p,x), r^{\sigma_c}_{\gamma}(p)z\big)
	= (x,p),
\]
so that $\Ad_\alpha$ descends through $q$ to the map $(y,p) \mapsto (x,p)$. Thus
conjugation by $\alpha$ determines an isomorphism
\[\begin{tikzpicture}[>=stealth]
	\node (0tl) at (0,1) {$0$};
	\node (Tt) at (2,1) {$\TT$};
	\node (At) at (4,1) {$A_y$};
	\node (Pt) at (6,1) {$\Per(\Lambda)$};
	\node (0tr) at (8,1) {$0$};
	\draw[->] (0tl)--(Tt);
	\draw[->] (Tt)--(At);
	\draw[->] (At)--(Pt);
	\draw[->] (Pt)--(0tr);
	\node (0bl) at (0,-1) {$0$};
	\node (Tb) at (2,-1) {$\TT$};
	\node (Ab) at (4,-1) {$A_x$};
	\node (Pb) at (6,-1) {$\Per(\Lambda)$};
	\node (0br) at (8,-1) {$0$};
	\draw[->] (0bl)--(Tb);
	\draw[->] (Tb)--(Ab);
	\draw[->] (Ab)--(Pb);
	\draw[->] (Pb)--(0br);
	\draw[->] (Tt)--(Tb) node[pos=0.5, left] {\small$\id$};
	\draw[->] (At)--(Ab) node[pos=0.5, left] {\small$\Ad_\alpha$};
	\draw[->] (Pt)--(Pb) node[pos=0.5, left] {\small$\id$};
\end{tikzpicture}\]
of extensions.
Hence $\sigma^x_c$ is cohomologous to $\sigma^y_c$; so the cohomology class of $\sigma_c^x$ is constant on orbits.
\end{proof}

\begin{proof}[Proof of Proposition~\ref{prp:constant in Ii}]
Let $\sigma_c$ be a continuous cocycle on $\Gg_\Lambda$ constructed in
Section~\ref{sec:kgrgpds} so that $C^*(\Gg_\Lambda, \sigma_c) \cong C^*(\Lambda, c)$. By
Lemma~\ref{lem:bicharacter}, the cohomology class of $\sigma_c^x$ is independent of $x$.
So there exists a cocycle $\omega \in Z^2 (\Per(\Lambda), \TT)$ whose cohomology class
agrees with that of $\sigma_c^x$ for each $x$. We may assume that $\omega$ is a
bicharacter.

The map $\tilde{c}_x : \Per(\Lambda) \times \Per(\Lambda) \to \TT$ defined by
\[
(p,q) \mapsto \tilde{c}_x (p,q) := \sigma_c((x,p,x),(x,q,x)) \overline{\omega(p,q)}
\]
is a coboundary on $\Per(\Lambda)$ for each $x$. Fix free abelian generators $g_1, \dots,
g_l$ for $\Per(\Lambda)$. Fix $x \in \Lambda^\infty$, and define $b_x(0) = b_x(g_j) = 1
\in \TT$ for all $j$, and then define $b_x$ on $\Per(\Lambda)$ inductively by
\begin{equation}\label{eq:bxdef}
b_x(m)\overline{b_x(m + g_i)} = \tilde{c}_x(g_i, m)
	\qquad\text{whenever $m \in \langle g_j : j \le i\rangle$.}
\end{equation}
Since $x \mapsto \tilde{c}_x(p,q)$ is continuous for each $p,q$, an induction argument
shows that $x \mapsto b_x(p)$ is continuous for each $p$.

We claim that each $\delta^1 b_x = \tilde{c}_x$. To see this, choose for each $x$ a
cochain $\tilde{b}_x : \Per(\Lambda) \to \TT$ such that $\delta^1 \tilde{b}_x =
\tilde{c}_x$. The map $a_x(m) := \prod_{i=1}^l \overline{\tilde{b}_x(g_i)}^{m_i}$ is a
1-cocycle on $\Per(\Lambda)$, and so $\delta^1 a_x = 1$. Hence $\delta^1 (a_x
\tilde{b}_x) = \delta^1 \tilde{b}_x = \tilde{c}_x$. We have $(a_x \tilde{b}_x)(0) = (a_x
\tilde{b}_x)(g_i) = 1$ for all $i$, and for $i \le l$ and $m \in \langle g_j : j \le
i\rangle$, we have
\begin{align*}
(a_x\tilde{b}_x)(m)\overline{(a_x\tilde{b}_x)(m+g_i)}
	&= (a_x\tilde{b}_x)(g_i)(a_x\tilde{b}_x)(m)
		\overline{(a_x\tilde{b}_x)(m+g_i)} \\
	&= \delta^1(a_x\tilde{b}_x)(g_i, m)
	= \tilde{c}_x (g_i,m).
\end{align*}
So $b_x$ and $a_x \tilde{b}_x$ both take $0$ and each $g_i$ to $1$ and
satisfy~\eqref{eq:bxdef}. Hence $b_x = a_x\tilde{b}_x$, and $\delta^1b_x = \tilde{c}_x$
for all $x$.

Since $x \mapsto b_x(p)$ is continuous for each $p$, the map $b : (x,p,x) \mapsto b_x(p)$
is a continuous cochain on $\Ii_\Lambda$. Since $\Ii_\Lambda$ is clopen in $\Gg_\Lambda$,
we can extend $b$ to a cochain $\tilde{b}$ on all of $\Gg_\Lambda$ by setting
$\tilde{b}|_{\Gg_\Lambda \setminus \Ii_\Lambda} \equiv 1$.

Now $\delta^1\tilde{b}$ is a continuous coboundary on $\Gg_\Lambda$, so $\sigma :=
\sigma_c \cdot \overline{\delta^1 \tilde{b}}$ represents the same cohomology class as
$\sigma_c$. Hence $C^*(\Gg_\Lambda, \sigma) \cong C^*(\Gg_\Lambda, \sigma_c) \cong
C^*(\Lambda, c)$ \cite[Proposition~II.1.2]{Ren}. We have $\sigma((x,p,x),(x,q,x)) =
\omega(p,q)$ for $p,q \in \Per(\Lambda)$ by construction of $\tilde{b}$. Each
$\sigma^x_c$ is cohomologous to $\omega$ by choice of $\omega$.
\end{proof}

Given an abelian group $A$ and a cocycle $\omega \in Z^2(A, \TT)$, we write
\begin{equation}\label{eq:Zomega}
    Z_\omega := \{p \in A : (\omega\omega^*)(p,q) = 1 \text{ for all } q \in A\}
\end{equation}
for the kernel of the homomorphism $p \mapsto (\omega\omega^*)(p, \cdot)$ from $A$ to
$\widehat{A}$ induced by $\omega\omega^*$. Thus $Z_\omega$ is a subgroup of $A$.

We now have all the ingredients needed to state our main theorem.

\begin{thm}\label{thm:main simple}
Let $\Lambda$ be a row-finite cofinal $k$-graph with no sources, and suppose that $c \in
\Zcat2(\Lambda, \TT)$. Suppose that $\sigma \in Z^2(\Gg_\Lambda, \TT)$ and $\omega \in
Z^2(\Per(\Lambda), \TT)$ satisfy $C^*(\Gg_\Lambda, \sigma) \cong C^*(\Lambda, c)$ and
$\sigma|_{\Ii_\Lambda} = \omega \times 1_{\Lambda^\infty}$ (such a pair $\sigma, \omega$
exist by Proposition~\ref{prp:constant in Ii}). Let $r^\sigma : \Gg_\Lambda \to
\Per(\Lambda)\widehat{\;}$ be the cocycle of Lemma~\ref{lem:r-alpha}. Then $C^*(\Lambda,
c)$ is simple if and only if $\{(r(\gamma), r^\sigma_\gamma|_{Z_\omega}) : \gamma \in
(\Gg_\Lambda)_x\}$ is dense in $\Lambda^\infty \times \widehat{Z}_\omega$ for all $x \in
\Lambda^\infty$. In particular, if $\omega$ is nondegenerate, then $C^*(\Lambda, c)$ is
simple.
\end{thm}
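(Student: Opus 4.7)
The plan is to realise $C^*(\Lambda, c) \cong C^*(\Gg_\Lambda, \sigma)$ as the $C^*$-algebra $C^*(\Hh_\Lambda; \sB_\Lambda)$ of a Fell bundle $\sB_\Lambda$ over the amenable, topologically principal, Hausdorff \'etale groupoid $\Hh_\Lambda = \Gg_\Lambda/\Ii_\Lambda$ supplied by Corollary~\ref{cor:ILambda}, and then apply the Ionescu--Williams machinery of \cite{IW}. Using the central extension $\Gg_\Lambda \times_\sigma \TT$ together with the fact that $\sigma|_{\Ii_\Lambda}$ factors as the bicharacter $\omega$ on the $\Per(\Lambda)$-coordinate, I would build $\sB_\Lambda$ fibrewise following \cite{DKR}, arranging that $C^*(\Hh_\Lambda; \sB_\Lambda) \cong C^*(\Gg_\Lambda, \sigma)$ and
\[
C^*(\Hh_\Lambda^{(0)}; \sB_\Lambda) \cong C^*(\Ii_\Lambda, \sigma) \cong C_0(\Lambda^\infty) \otimes C^*(\Per(\Lambda), \omega).
\]
Since $\Per(\Lambda)$ is a finitely generated free abelian group, $C^*(\Per(\Lambda), \omega)$ is a noncommutative torus with centre (isomorphic to) $C^*(Z_\omega)$, and by \cite{OPT, Rieffel:PSPM82} its primitive ideal space is $\widehat{Z}_\omega$; consequently $\Prim C^*(\Hh_\Lambda^{(0)}; \sB_\Lambda) \cong \Lambda^\infty \times \widehat{Z}_\omega$.

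The core calculation is to identify the Ionescu--Williams action of $\Hh_\Lambda$ on this primitive ideal space. For $\gamma \in \Gg_\Lambda$ with $r(\gamma) = x$ and $s(\gamma) = y$, a section of $\sB_\Lambda$ supported near $[\gamma]$ implements, by conjugation, an isomorphism from the fibre $C^*(\Per(\Lambda), \omega)$ over $y$ to the one over $x$. Formula~\eqref{eq:conjugation} governs exactly this conjugation: the unitary generator $u_p$ at $y$ is sent to $r^\sigma_\gamma(p)\, u_p$ at $x$. Restricting to the central subalgebra $C^*(Z_\omega)$ and using the Gelfand picture, this is precisely the gauge action by the character $r^\sigma_\gamma|_{Z_\omega} \in \widehat{Z}_\omega$. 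Hence the IW action of $[\gamma]$ sends $(y, \chi)$ to $(x, \chi \cdot r^\sigma_\gamma|_{Z_\omega})$, which is exactly the action $\theta$ flagged in the introduction.

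Finally, I would establish as a preliminary general fact (Corollary~\ref{cor:minimality}) that for any Fell bundle $\sB$ over a topologically principal, amenable, Hausdorff \'etale groupoid $\Hh$, the algebra $C^*(\Hh; \sB)$ is simple if and only if the IW action of $\Hh$ on $\Prim C^*(\Hh^{(0)}; \sB)$ is minimal; this is done by adapting Renault's classical argument \cite{Ren}, using amenability to pass to the reduced algebra and topological principality to force any ideal to be determined by its intersection with the unit-fibre subalgebra, which intersection is necessarily IW-invariant. Applied to $\sB_\Lambda$ over $\Hh_\Lambda$, simplicity of $C^*(\Lambda, c)$ is equivalent to every $\theta$-orbit in $\Lambda^\infty \times \widehat{Z}_\omega$ being dense; since $\theta$ acts by translation in the second coordinate, it suffices to check this for $(x, 1)$, which is the stated density condition. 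When $\omega$ is nondegenerate, $Z_\omega = \{0\}$ and $\widehat{Z}_\omega$ is trivial, so the condition reduces to density of $r((\Gg_\Lambda)_x)$ in $\Lambda^\infty$, which is automatic from cofinality. The principal obstacle is the explicit identification of the IW action in the middle step: one must carefully unwind how a section of $\sB_\Lambda$ acts by conjugation on the twisted group algebra of $\Per(\Lambda)$ and then verify that the induced action on the centre is indeed the translation dictated by $r^\sigma_\gamma|_{Z_\omega}$; once that is in hand, the remainder slots cleanly into the general Fell-bundle framework.
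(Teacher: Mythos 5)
Your proposal follows essentially the same route as the paper: realise $C^*(\Lambda,c)$ as $C^*(\Hh_\Lambda;\sB_\Lambda)$, identify $C^*(\Hh_\Lambda^{(0)};\sB_\Lambda)$ with $C_0(\Lambda^\infty)\otimes C^*(\Per(\Lambda),\omega)$, compute the Ionescu--Williams action via the conjugation formula~\eqref{eq:conjugation} to match it with $\theta$, and invoke a Renault-style ``simple iff the action on $\Prim$ of the unit-space algebra is minimal'' result for Fell bundles over topologically principal amenable \'etale groupoids. The only cosmetic difference is that you read off $\Prim C^*(\Per(\Lambda),\omega)\cong\widehat{Z}_\omega$ and the translation action from the centre $C^*(Z_\omega)$, whereas the paper uses the induced-algebra picture $\Ind^{\Per(\Lambda)\widehat{\;}}_{\widehat{H}}C^*(H,\tilde\omega)$ from \cite{OPT}; both are valid and lead to the same identification.
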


The proof of the main theorem will occupy the remainder of this section and most of the
next. Before we get started, we provide a practical method for computing $Z_\omega$
without reference to $\Gg_\Lambda$. To see why this is useful, observe that to apply our
main theorem, it is typically necessary to compute a cocycle $\sigma$ on $\Gg_\Lambda$
with the required properties, and this is not so easy to do. (We discuss a class of
examples where this is possible in Section~\ref{sec:CP}.) But the last statement of the
theorem says that if we know that the centre of the bicharacter $\omega$ is trivial, then
no computations in $\Gg_\Lambda$ are necessary.

In the following result, for $m \in \ZZ^k$, we write $m^+$ and $m^-$ for $m \vee 0$ and
$(-m) \vee 0)$. So $m = m^+ - m^-$ and $m^+ \wedge m^- = 0$.

\begin{lem}\label{lem:compute omega}
Let $\Lambda$ be a row-finite cofinal $k$-graph with no sources, and suppose that $c \in
\Zcat2(\Lambda, \TT)$. Let $g_1, \dots, g_l$ be free generators for $\Per(\Lambda)$.
There exists $v \in \Lambda^0$ such that $\shift^{g_i^+}(x) = \shift^{g_i^-}(x)$ for all
$x \in Z(v)$ and $1 \le i \le l$. Let $N = \sum^l_{i=1} g^+_i + g^-_i$, and fix $\lambda \in v\Lambda^N$. For
$1 \le i \le l$, factorise $\lambda = \mu_i \tau_i = \nu_i\rho_i$ where $d(\mu_i) = g^+_i$ and
$d(\nu_i) = g^-_i$. For $1 \le i, j \le l$, factorise $\lambda = \mu_{ij}\tau_{ij} =
\nu_{ij}\rho_{ij}$ where $d(\mu_{ij}) = (g_i + g_j)^+$ and $d(\nu_{ij}) = (g_i + g_j)^-$.
Let $\omega$ be the bicharacter of $\Per(\Lambda)$ such that
\[
\omega(g_i, g_j) := c(\mu_i,\tau_i)\overline{c(\nu_i,\rho_i)}c(\mu_j, \tau_j)\overline{c(\nu_j, \rho_j)}
    \overline{c(\mu_{ij}, \tau_{ij})}c(\nu_{ij}, \rho_{ij})
    \quad\text{ for $1 \le i,j \le l$}.
\]
Then there exist $\sigma \in Z^2(\Gg_\Lambda, \TT)$ and a bicharacter $\omega'$ of
$\Per(\Lambda)$ such that $C^*(\Lambda,c) \cong C^*(\Gg_\Lambda, \sigma)$,
$\sigma|_{\Ii_\Lambda} = \omega' \times  1_{\Lambda^\infty}$, and $Z_{\omega'} =
Z_\omega$. In particular, if $Z_\omega = \{0\}$ then $C^*(\Lambda, c)$ is simple.
\end{lem}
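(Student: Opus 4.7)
The plan is to show that the antisymmetric bicharacter $\omega\omega^*$ determined by the bicharacter $\omega$ in the statement coincides with $\sigma_c^x(\sigma_c^x)^*$ for a suitable $\sigma_c$ and $x$; since Proposition~\ref{prp:constant in Ii} yields $\sigma$ and a bicharacter $\omega'$ with $\omega'(\omega')^* = \sigma_c^x(\sigma_c^x)^*$, this forces $\omega\omega^* = \omega'(\omega')^*$, and hence $Z_\omega = Z_{\omega'}$ by the definition~\eqref{eq:Zomega}.

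I would begin by producing $v$. Each $g_i \in \Per(\Lambda)$, so by \cite[Lemma~4.6]{CKSS} and Corollary~\ref{cor:ILambda} the open set $V_i := \{x : \shift^{g_i^+}(x) = \shift^{g_i^-}(x)\}$ is nonempty; applying the same reasoning to $g_i + g_j \in \Per(\Lambda)$ shows $V_{ij} := \{x : \shift^{(g_i+g_j)^+}(x) = \shift^{(g_i+g_j)^-}(x)\}$ is nonempty. Cofinality and row-finiteness then produce a single vertex $v$ with $Z(v) \subseteq \bigcap_i V_i \cap \bigcap_{i,j} V_{ij}$, and $\lambda \in v\Lambda^N$ exists because $\Lambda$ has no sources. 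The vertex-matchings $s(\mu_i) = s(\nu_i)$ and $s(\mu_{ij}) = s(\nu_{ij})$, needed for $(\mu_i, \nu_i)$ and $(\mu_{ij}, \nu_{ij})$ to lie in $\Lambda \mathbin{{_s*_s}} \Lambda$, follow from the identities $x(g_i^+, g_i^+) = x(g_i^-, g_i^-)$ (and analogously for $g_i + g_j$) implied by the periodicity of any $x \in Z(\lambda)$.

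Next, using the flexibility in \cite[Lemma~6.6]{KPS4} I would construct a partition $\Pp$ satisfying~\eqref{eq:Pproperties} that contains all the prescribed pairs $(\mu_i, \nu_i)$ and $(\mu_{ij}, \nu_{ij})$, and form the associated 2-cocycle $\sigma_c$ from~\eqref{eq:sigmac}. The central computation evaluates $\sigma_c((x, g_i, x), (x, g_j, x))$ for $x \in Z(\lambda)$: our choice of $\Pp$ forces the labels $\mu_g, \nu_g, \mu_h, \nu_h, \mu_{gh}, \nu_{gh}$ in~\eqref{eq:sigmac} to be $\mu_i, \nu_i, \mu_j, \nu_j, \mu_{ij}, \nu_{ij}$. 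Taking $\alpha = \tau_i$, $\beta = \lambda(g_j^+, N - g_i)$, $\gamma = \tau_{ij}$ in~\eqref{eq:alphabetagamma}, the three compatibility equations all hold: the periodicities of $x$ at $g_i$, $g_j$, and $g_i+g_j$ identify the composite paths $\nu_i\alpha$, $\nu_j\beta$, $\nu_{ij}\gamma$, etc.\ with initial segments of $\lambda$. Substituting into~\eqref{eq:sigmac} and applying the 2-cocycle identity for $c$ to the factorisations $\lambda = \mu_i\tau_i = \nu_i\rho_i$ (and their $j$ and $ij$ analogues) yields an expression for $\sigma_c^x(g_i, g_j)$ that differs from $\omega(g_i, g_j)$ by auxiliary $c$-factors involving $\alpha, \beta, \gamma$. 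Comparing this with the analogous computation for $\sigma_c^x(g_j, g_i)$, the auxiliary factors cancel symmetrically, giving $\sigma_c^x(g_i, g_j)\overline{\sigma_c^x(g_j, g_i)} = \omega(g_i, g_j)\overline{\omega(g_j, g_i)}$.

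Since both $\sigma_c^x(\sigma_c^x)^*$ and $\omega\omega^*$ are bicharacters on the free abelian group $\Per(\Lambda) \cong \ZZ^l$ (by \cite[Proposition~3.2]{OPT}), equality on generator pairs gives equality as bicharacters. Proposition~\ref{prp:constant in Ii} then supplies $\sigma$ and the bicharacter $\omega'$ with $\omega' \sim \sigma_c^x$, whence $\omega'(\omega')^* = \sigma_c^x(\sigma_c^x)^* = \omega\omega^*$; consequently $Z_\omega = Z_{\omega'}$. The final assertion follows from Theorem~\ref{thm:main simple}, since $Z_\omega = \{0\}$ means $\omega'$ is nondegenerate. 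The main obstacle lies in the central computation: coordinating the single triple $(\alpha, \beta, \gamma)$ of~\eqref{eq:alphabetagamma} with the three independent pairs $(\mu_i, \nu_i), (\mu_j, \nu_j), (\mu_{ij}, \nu_{ij})$ requires careful degree bookkeeping, repeated use of the factorisation property, and a delicate comparison between $\sigma_c^x(g_i, g_j)$ and $\sigma_c^x(g_j, g_i)$ to see that the non-$\omega$ factors cancel in the antisymmetric bicharacter.
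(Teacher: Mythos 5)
Your strategy coincides with the paper's: enlarge $\{(\mu_i,\nu_i)\}\cup\{(\mu_{ij},\nu_{ij})\}\cup\{(\lambda,s(\lambda)):\lambda\in\Lambda\}$ to a partition $\Pp$ as in~\eqref{eq:Pproperties}, form $\sigma_c$ via~\eqref{eq:sigmac}, compare $\sigma^x_c$ with $\omega$ on the generators, and finish with Proposition~\ref{prp:constant in Ii}, \cite[Proposition~3.2]{OPT} and Theorem~\ref{thm:main simple}. The setup (existence of $v$, the disjointness argument for $\Pp_0$, the concluding deductions) is in order. The only substantive difference is that the paper asserts the exact identity $\sigma_c((x,g_i,x),(x,g_j,x))=\omega(g_i,g_j)$ outright, while you aim for the weaker equality of antisymmetrisations and defer the verification.

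That deferred verification is where all the content lies, and the cancellation you claim does not hold. Every factor in the displayed formula for $\omega(g_i,g_j)$ depends only on $i$, only on $j$, or only on $g_i+g_j$, so $\omega(g_i,g_j)=\omega(g_j,g_i)$ and hence $\omega\omega^*\equiv 1$; your identity $\sigma^x_c(g_i,g_j)\overline{\sigma^x_c(g_j,g_i)}=\omega(g_i,g_j)\overline{\omega(g_j,g_i)}$ would therefore force $\sigma^x_c$ to be symmetric. It is not: take $\Lambda=T_2$, $c(m,n)=e^{2\pi i\theta m_2n_1}$ with $\theta$ irrational, $g_1=e_1$, $g_2=e_2$, $\lambda=(1,1)$. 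Your own choices ($\alpha=\tau_1=e_2$, $\beta=\gamma=0$) give $\sigma^x_c(g_1,g_2)=c(e_1,e_2)=1$ and $\sigma^x_c(g_2,g_1)=c(e_2,e_1)=e^{2\pi i\theta}$, so the antisymmetrisation is $e^{-2\pi i\theta}$, whereas $\omega(g_1,g_2)=\omega(g_2,g_1)=e^{2\pi i\theta}$ gives antisymmetrisation $1$. (This shows the printed formula for $\omega$ cannot be taken at face value: it would yield $Z_\omega=\Per(\Lambda)=\ZZ^2$ while $Z_{\omega'}=\{0\}$ here, so the generator formula must be an asymmetric expression recording $\sigma^x_c(g_i,g_j)$ itself --- which is what the paper's proof claims, without computation, that the construction of $\Pp_0$ delivers.) To close the argument you must actually carry out the evaluation of~\eqref{eq:sigmac} at $((x,g_i,x),(x,g_j,x))$ and match it against a correctly asymmetric target; the symmetric cancellation you describe is not available, so as written this step fails.
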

\begin{proof}
We claim that there is a set $\Pp \subseteq \Lambda \mathbin{{_s*_s}} \Lambda$ such that
\[
\{(\lambda, s(\lambda)) : \lambda \in \Lambda\} \cup \{(\mu_i, \nu_i) : i \le l\} \cup \{(\mu_{ij}, \nu_{ij}) : i \not= j\}
    \subseteq \Pp,
\]
and such that $\Gg_\Lambda = \bigsqcup_{(\mu,\nu) \in\Pp} Z(\mu,\nu)$.

To see this, we argue as in \cite[Lemma~6.6]{KPS4} to see that the $Z(\lambda,
s(\lambda))$ are mutually disjoint and $\bigsqcup_\lambda Z(\lambda, s(\lambda))$ is
clopen in $\Gg_\Lambda$. Let
\[
\Pp_0 := \{(\lambda, s(\lambda)) : \lambda \in \Lambda\} \cup \{(\mu_i, \nu_i) : i \le l\}
    \cup \{(\mu_{ij}, \nu_{ij}) : 1 \le i \le j \le l\}.
\]
For each $i \le l$ either $(\mu_i, \nu_i) = (\mu_i, s(\mu_i))$, or $d(\mu_i) - d(\nu_i)
\not\in \NN^k$ and so $Z(\mu_i, \nu_i) \cap Z(\lambda, s(\lambda)) = \emptyset$ for all
$\lambda$; and likewise for each $(\mu_{ij}, \nu_{ij})$. Each $Z(\mu_i, \nu_i) \subseteq
\Lambda^\infty \times \{g_i\} \times \Lambda^\infty$, and each $Z(\mu_{ij}, \nu_{ij})
\subseteq \Lambda^\infty \times \{g_i + g_j\} \times \Lambda^\infty$. So the $Z(\mu_i,
\nu_i)$ and the $Z(\mu_{ij},\nu_{ij})$ collectively are mutually disjoint. Hence the
$Z(\mu,\nu)$ where $(\mu,\nu) \in \Pp_0$ are mutually disjoint, and $\bigsqcup_{(\mu,\nu)
\in \Pp_0} Z(\mu,\nu)$ is clopen in $\Gg_\Lambda$. Now the argument of the final two
paragraphs of \cite[Lemma~6.6]{KPS4} shows that we can extend $\Pp_0$ to the required
collection $\Pp$.

The formula~\eqref{eq:sigmac} yields a cocycle $\sigma_c \in Z^2(\Gg_\Lambda, \TT)$, and
the construction of $\Pp_0$ shows that
\[
\sigma_c((x, g_i, x), (x, g_j, x)) = \omega(g_i, g_j)
    \quad\text{ for all $x \in Z(\lambda)$ and $i,j \le l$.}
\]
Corollary~7.9 of \cite{KPS4} implies that $C^*(\Lambda, c) \cong C^*(\Gg_\Lambda,
\sigma_c)$. Proposition~3.1 applied to this $\Pp$ gives a bicharacter $\omega'$ of
$\Per(\Lambda)$ and a cocycle $\sigma$ on $\Gg_\Lambda$ such that $C^*(\Gg_\Lambda,
\sigma) \cong C^*(\Gg_\Lambda, \sigma_c) \cong C^*(\Lambda, c)$ and $\omega'$ is
cohomologous to $\sigma^x_c$ and hence to $\omega$. Thus $Z_\omega = Z_{\omega'}$ by
\cite[Proposition~3.2]{OPT}. The final statement follows from Theorem~\ref{thm:main
simple}.
\end{proof}

We finish the section by showing that $r^\sigma$ induces an action $\theta$ of the
quotient $\Hh_\Lambda$ of $\Gg_\Lambda$ by the interior of its isotropy on the space
$\Lambda^\infty \times \widehat{Z}_\omega$. In particular, we prove that $\theta$ is
minimal if and only if $\{(r(\gamma), r^\sigma_\gamma|_{Z_\omega}) : \gamma \in
(\Gg_\Lambda)_x\}$ is dense in $\Lambda^\infty \times \widehat{Z}_\omega$ for all $x \in
\Lambda^\infty$ as in Theorem~\ref{thm:main simple}. In the next section we will realise
$C^*(\Lambda, c)$ as the $C^*$-algebra of a Fell bundle over a quotient groupoid
$\Hh_\Lambda$ of $\Gg_\Lambda$, and show that the action, given by \cite{IW}, of
$\Hh_\Lambda$ on the primitive ideal space of the $C^*$-algebra over the unit space in
this bundle is isomorphic as a groupoid action to $\theta$. We then prove our main
theorem by showing that minimality of the action described in \cite{IW} characterises
simplicity of the $C^*$-algebra of the Fell bundle.

Given $\Lambda, c$ and $\omega$ as in Proposition~\ref{prp:constant in Ii}, and with
$Z_\omega$ as in~\eqref{eq:Zomega}, we can form the quotient $H :=
\Per(\Lambda)/Z_\omega$. We then have $\widehat{H} \cong Z_\omega^\perp \leq
\Per(\Lambda)\widehat{\;}$, so we regard $\widehat{H}$ as a subgroup of
$\Per(\Lambda)\widehat{\;}$.

\begin{lem}\label{lem:tilde-r cocycle}
Let $\Lambda$ be a row-finite cofinal $k$-graph with no sources, and suppose that $c \in
\Zcat2(\Lambda, \TT)$. Suppose that $\sigma \in Z^2(\Gg_\Lambda, \TT)$ is cohomologous to
$\sigma_c$ and that $\omega \in Z^2(\Per(\Lambda), \TT)$ satisfies $\sigma|_{\Ii_\Lambda}
= 1_{\Lambda^\infty} \times \omega$ as in Proposition~\ref{prp:constant in Ii}. Let
$r^\sigma$ be the $\Per(\Lambda)\widehat{\;}$-valued $1$-cocycle on $\Gg_\Lambda$ given
by Lemma~\ref{lem:r-alpha}. For $\alpha \in \Ii_\Lambda$, we have $r^\sigma_\alpha \in
Z_\omega^\perp$. Let $\pi : \Gg_\Lambda \to \Hh_\Lambda := \Gg_\Lambda/\Ii_\Lambda$ be
the quotient map. There is a continuous $\widehat{Z}_\omega$-valued $1$-cocycle
$\tilde{r}^\sigma$ on $\Hh_\Lambda$ such that $\tilde{r}^\sigma_{\pi(\alpha)}(p) =
r^\sigma_\alpha(p)$ for all $\alpha \in \Gg_\Lambda$ and $p \in Z_\omega$. There is an
action $\theta$ of $\Hh_\Lambda$ on $\Lambda^\infty \times \widehat{Z}_\omega$ such that
\[
\theta_\alpha(s(\alpha), \chi) = (r(\alpha), \tilde{r}^\sigma_\alpha \cdot \chi)
    \text{ for all $\alpha \in \Hh_\Lambda$ and $\chi \in \widehat{Z}_\omega$.}
\]
\end{lem}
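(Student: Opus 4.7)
The plan is to verify the three statements in order, with the main work concentrated in the kernel claim.

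For the kernel claim, I would fix $\alpha = (x, q, x) \in \Ii_\Lambda$ and compute $r^\sigma_\alpha(p)$ directly from~\eqref{eq:conjugation} using $\sigma|_{\Ii_\Lambda} = 1_{\Lambda^\infty}\times\omega$, yielding $r^\sigma_\alpha(p) = \omega(q,p)\,\omega(q+p,-q)\,\overline{\omega(q,-q)}$. Two applications of the $2$-cocycle identity $\omega(a,b)\omega(a+b,c) = \omega(b,c)\omega(a,b+c)$ — first with $(a,b,c) = (q,p,-q)$ and then with $(a,b,c) = (q,-q,p)$, exploiting $\omega(0,\cdot) = 1$ — eliminate the $q+p$ and $p-q$ arguments and collapse the product to $\omega(p,-q)\overline{\omega(-q,p)}$. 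Because $\omega$ is a bicharacter, this simplifies to $\omega(q,p)\overline{\omega(p,q)} = (\omega\omega^*)(q,p)$, which is trivial for $p \in Z_\omega$ by definition of $Z_\omega$. Hence $r^\sigma_\alpha \in Z_\omega^\perp$ for every $\alpha \in \Ii_\Lambda$.

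Next I would descend $r^\sigma|_{Z_\omega}$ through $\pi$. If $\pi(\alpha) = \pi(\beta)$ then $\alpha\beta^{-1}\in\Ii_\Lambda$, so the $1$-cocycle identity $r^\sigma_\alpha = r^\sigma_{\alpha\beta^{-1}}\,r^\sigma_\beta$ together with the first step forces $r^\sigma_\alpha|_{Z_\omega} = r^\sigma_\beta|_{Z_\omega}$, making the formula $\tilde{r}^\sigma_{\pi(\alpha)}(p) := r^\sigma_\alpha(p)$ for $p \in Z_\omega$ well defined. Continuity comes from Corollary~\ref{cor:ILambda}: since $\Ii_\Lambda$ is clopen in $\Gg_\Lambda$, the quotient map $\pi$ is a local homeomorphism, so $\tilde{r}^\sigma$ inherits continuity from $r^\sigma$ (which is continuous by Lemma~\ref{lem:r-alpha}). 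The $1$-cocycle property passes from $r^\sigma$ to $\tilde{r}^\sigma$ because $\pi$ is a groupoid homomorphism. Finally, identifying $\Hh_\Lambda^{(0)}$ with $\Lambda^\infty$ (valid since $\Ii_\Lambda$ is a group bundle over the unit space), the prescription $\theta_\alpha(s(\alpha),\chi) := (r(\alpha), \tilde{r}^\sigma_\alpha\cdot\chi)$ defines a fibred bijection over $\Lambda^\infty$; the $1$-cocycle property of $\tilde{r}^\sigma$ yields $\theta_{\alpha\beta} = \theta_\alpha\circ\theta_\beta$ for composable $\alpha,\beta\in\Hh_\Lambda$, and joint continuity of $\theta$ is immediate from continuity of $\tilde{r}^\sigma$ together with continuity of translation on $\widehat{Z}_\omega$.

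The principal obstacle is the opening cocycle manipulation: arranging the two applications of the $2$-cocycle identity so that the $q$-dependence concentrates into the antisymmetric bicharacter $(\omega\omega^*)(q,p)$ requires a little care but is entirely mechanical. Once that identity is in hand, the descent to $\Hh_\Lambda$ and the construction of $\theta$ are routine quotient-groupoid bookkeeping.
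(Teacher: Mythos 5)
Your proposal is correct and follows essentially the same route as the paper: the paper obtains the key identity $r^\sigma_{(x,q,x)}(p) = (\omega\omega^*)(q,p)$ by conjugating in the central extension $\Gg_\Lambda \times_\sigma \TT$, which is just a repackaging of your direct two-step application of the cocycle identity to~\eqref{eq:conjugation}, and the descent through $\pi$, the continuity argument, and the construction of $\theta$ all match the paper's. The only step you leave implicit --- that $\tilde{r}^\sigma_{\pi(\alpha)}|_{Z_\omega}$ is actually a character --- is already supplied by Lemma~\ref{lem:r-alpha}, since $\sigma_x = \omega$ for all $x$ here, so nothing is missing.
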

\begin{proof}
Suppose that $p \in \Per(\Lambda)$ and $q \in Z_\omega$. Calculating in $\Gg_\Lambda \times_\sigma \TT$, we have
\begin{align*}
((x,p,x),1)&((x,q,x), 1)((x,p,x),1)^{-1} \\
	&= \big((x,q,x), \sigma\big((x,p,x),(x,q,x)\big)
					\overline{\sigma\big((x,q,x),(x,p,x)\big)}\big) \\
    &= \big((x,q,x), \omega\omega^*(p,q)\big).
\end{align*}
Hence $r^\sigma_{(x,p,x)}(q) = (\omega\omega^*)(p,q) = 1$ since $q \in Z_\omega$.

Suppose that $\pi(\alpha) = \pi(\beta)$. Then $\alpha = \beta \gamma$ for some $\gamma
\in \Ii_\Lambda$. So for $p \in Z_\omega$ we have $r^\sigma_\alpha(p) =
r^\sigma_\beta(p)r^\sigma_\gamma(p) = r^\sigma_\beta(p)$, showing that $\tilde{r}^\sigma$
is well defined. It is continuous by definition of the quotient topology, and is a
1-cocycle because $r^\sigma$ is. For $p, p' \in Z_\omega$, we have
\[
\tilde{r}^\sigma_{\pi(\alpha)}(p + p')
	= r^\sigma_\alpha(p + p')
	= r^\sigma_\alpha(p) r^\sigma_\alpha(p')
	= \tilde{r}^\sigma_{\pi(\alpha)}(p)\tilde{r}^\sigma_{\pi(\alpha)}(p').
\]
The final statement follows immediately.
\end{proof}

\section{A Fell bundle associated to a \texorpdfstring{$k$}{k}-graph 2-cocycle}\label{sec:Fell bundle}

To identify the twisted $k$-graph algebra $C^*(\Lambda, c)$ with the $C^*$-algebra of a
Fell bundle, we must first construct the bundle. We start by describing what will become
the $C^*$-algebra sitting over the unit-space in this bundle: the twisted $C^*$-algebra
of $\Ii_\Lambda$ sitting inside that of $\Gg_\Lambda$.

\begin{lem}\label{lem:twisted Per}
Let $\Lambda$ be a row-finite cofinal $k$-graph with no sources, and take $c \in
\Zcat2(\Lambda, \TT)$. Suppose that $\sigma \in Z^2(\Gg_\Lambda, \TT)$ is cohomologous to
$\sigma_c$ and that $\omega \in Z^2(\Per(\Lambda), \TT)$ satisfies $\sigma|_{\Ii_\Lambda}
= 1_{\Lambda^\infty} \times \omega$ as in Proposition~\ref{prp:constant in Ii}. There is
a $*$-homomorphism
\[
\Phi : C_c(\Lambda^\infty) \otimes C^*(\Per(\Lambda), \omega) \to C_c(\Ii_\Lambda, \sigma)
\]
such that $(f \otimes u_p)(x,q,x) = \delta_{p,q}f(x)$ for all $f \in C_c(\Lambda^\infty)$
and $p,q \in \Per(\Lambda)$. This $*$-homomorphism extends to an isomorphism $\Phi :
C_0(\Lambda^\infty) \otimes C^*(\Per(\Lambda, \omega)) \to C^*(\Ii_\Lambda, \sigma)$.
\end{lem}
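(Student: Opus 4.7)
The approach is to use Corollary~\ref{cor:ILambda} to identify $\Ii_\Lambda$ with the trivial group bundle $\Lambda^\infty \times \Per(\Lambda)$, under which the hypothesis $\sigma|_{\Ii_\Lambda} = 1_{\Lambda^\infty} \times \omega$ says that $\sigma((x,p,x),(x,q,x)) = \omega(p,q)$ depends only on $p,q$ and not on $x$. The lemma then reduces to the general fact that for a locally compact Hausdorff space $X$ and a discrete abelian group $G$ equipped with a $2$-cocycle $\omega$, there is a canonical isomorphism $C^*(X \times G, 1 \times \omega) \cong C_0(X) \otimes C^*(G, \omega)$.

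For the algebraic step, extend the formula in the statement bilinearly to define $\Phi$ on the algebraic tensor product $C_c(\Lambda^\infty) \odot \CC[\Per(\Lambda), \omega]$. A direct calculation using the convolution formula on $C_c(\Ii_\Lambda, \sigma)$ and the factored form of $\sigma|_{\Ii_\Lambda}$ shows that $\Phi$ preserves product and involution: for elementary tensors, the only composable pairs in $\Ii_\Lambda$ contributing to $(\Phi(f \otimes u_p) * \Phi(g \otimes u_q))(x, r, x)$ are $((x,p,x), (x,q,x))$ with $r = p+q$, yielding $\omega(p,q) f(x) g(x)$, which matches $\Phi((f\otimes u_p)(g\otimes u_q))(x, r, x)$. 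Since $\Per(\Lambda)$ is discrete, every $h \in C_c(\Ii_\Lambda, \sigma)$ has support meeting only finitely many of the clopen sheets $\Lambda^\infty \times \{p\}$, so $h$ decomposes uniquely as a finite sum of elementary tensors; this gives both surjectivity of $\Phi$ onto $C_c(\Ii_\Lambda, \sigma)$ and injectivity of $\Phi$ on the algebraic tensor product.

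The main obstacle is extending this algebraic $*$-isomorphism to an isomorphism of the $C^*$-completions. Since $\Per(\Lambda) \subseteq \ZZ^k$ is amenable, $C^*(\Per(\Lambda), \omega)$ is nuclear and so the tensor product $C_0(\Lambda^\infty) \otimes C^*(\Per(\Lambda), \omega)$ has an unambiguous $C^*$-norm. Moreover, $\Ii_\Lambda \cong \Lambda^\infty \times \Per(\Lambda)$ is an amenable \'etale groupoid (a trivial bundle of amenable groups over a space), so the full and reduced twisted $C^*$-algebras $C^*(\Ii_\Lambda, \sigma)$ and $C^*_r(\Ii_\Lambda, \sigma)$ coincide. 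To match the $C^*$-norms, one represents both sides faithfully on $L^2(\Lambda^\infty, \mu) \otimes \ell^2(\Per(\Lambda))$ for a faithful Radon measure $\mu$ on $\Lambda^\infty$: on the tensor product side, $C_0(\Lambda^\infty)$ acts by multiplication in the first factor and $C^*(\Per(\Lambda), \omega)$ acts by the $\omega$-twisted left regular representation in the second; on the groupoid side, the same Hilbert space carries the regular representation of $C^*(\Ii_\Lambda, \sigma)$ built from the fibrewise left regular representations $\ell^2((\Ii_\Lambda)_x) = \ell^2(\Per(\Lambda))$. The algebraic description of $\Phi$ shows these two representations match on the dense algebraic tensor product, and both are faithful by amenability, so the two $C^*$-norms agree and $\Phi$ extends to the claimed isomorphism of $C^*$-completions.
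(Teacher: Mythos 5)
Your proposal is correct, but it takes a more hands-on route than the paper. The paper never writes down the convolution formulas: it observes that the indicator functions $U_p := 1_{\Lambda^\infty \times \{p\}}$ are unitary multipliers of $C^*(\Ii_\Lambda, \sigma)$ satisfying $U_p U_q = \omega(p,q)U_{p+q}$, so the universal property of $C^*(\Per(\Lambda), \omega)$ together with the universal property of the (nuclear) tensor product produces the homomorphism $f \otimes u_p \mapsto U_p f$ on the completed algebras in one stroke; injectivity is then obtained by regarding both sides as $C_0(\Lambda^\infty)$-algebras, noting that the regular representation of $C^*(\Ii_\Lambda,\sigma)$ on $\ell^2((\Ii_\Lambda)_x)$ is exactly the $\omega$-twisted regular representation of $\Per(\Lambda)$ (faithful since $\Per(\Lambda) \cong \ZZ^l$ is amenable), and using that the norm in a $C_0(X)$-algebra is the supremum of the fibre norms. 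Your argument shares the essential ingredient --- the identification of the fibre representations and amenability of $\Per(\Lambda)$ --- but builds the map by direct computation on $C_c(\Lambda^\infty) \odot \CC[\Per(\Lambda),\omega]$ and then matches norms through a concrete faithful representation on $L^2(\Lambda^\infty,\mu) \otimes \ell^2(\Per(\Lambda))$. That works, but it obliges you to (i) choose a fully supported measure $\mu$ and (ii) justify that the resulting direct-integral representation computes the reduced norm, i.e.\ that the essential supremum of $x \mapsto \|\pi_x(a)\|$ equals its supremum; for $a$ in the dense subalgebra this map is continuous (it factors through the finitely many values $f_p(x)$), so the point is easily settled, but it should be said. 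The paper's fibrewise/$C_0(X)$-algebra formulation buys exactly this: no measure is chosen and the supremum-of-fibre-norms description of the norm is quoted directly, so the essential-supremum issue never arises.
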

\begin{proof}
Recall that we identify $\Ii_\Lambda$ with $\Lambda^\infty \times \Per(\Lambda)$ as in
Corollary~\ref{cor:ILambda}. For each $p \in \Per(\Lambda)$, the characteristic function
$U_p := 1_{\Lambda^\infty \times \{p\}}$ is a unitary multiplier of $C^*(\Ii_\Lambda,
\sigma)$. By construction of $\sigma$, we have $U_p U_q = \omega(p,q) U_{p+q}$ for all
$p,q$, so the universal property of $C^*(\Per(\Lambda), \omega)$ gives a homomorphism
$C^*(\Per(\Lambda), \omega) \to \Mm C^*(\Ii_\Lambda, \sigma)$ satisfying $u_p \mapsto
U_p$. The $U_p$ clearly commute with $C_0(\Lambda^\infty)$, so the universal property of
the tensor product gives a homomorphism $C_0(\Lambda^\infty) \otimes C^*(\Per(\Lambda),
\omega) \to C^*(\Ii_\Lambda, \sigma)$ satisfying $f \otimes u_p \mapsto U_p f$. For $x
\in \Lambda^\infty$, we have $(\Ii_\Lambda)_x \cong \Per(\Lambda)$, and
$\sigma|_{(\Ii_\Lambda)_x} = \omega$ by Proposition~\ref{prp:constant in Ii}. Hence the
regular representation of $C^*(\Ii_\Lambda, \sigma)$ on $\ell^2((\Ii_\Lambda)_x)$ is
unitarily equivalent to the canonical faithful representation of $C^*(\Per(\Lambda),
\omega)$ on $\ell^2(\Per(\Lambda))$. Thus the homomorphism $\rho : f \otimes u_p \mapsto
U_p f$ is a fibrewise-injective homomorphism of $C_0(\Lambda^\infty)$-algebras, and is
injective on the central copy of $C_0(\Lambda^\infty)$. Since the norm on a
$C_0(X)$-algebra is the same as the supremum norm on the algebra of sections of the
associated upper-semicontinuous bundle \cite{Williams:crossed}, it follows that $\rho$ is
injective. Since its range contains $C_0(\Lambda^\infty \times \{p\})$ for each $p$, it
is also surjective.
\end{proof}

Given a compact abelian group $G$, and an action $\beta$ of a closed subgroup $H$ of $G$
on a $C^*$-algebra $C$, the \emph{induced algebra} $\Ind^G_H C$ is defined by
\[
\Ind^G_H C
    = \{f : G \to C \mid f(g - h) = \beta_h(f(g))\text{ for } g \in G\text{ and }h \in H\}
\]
with pointwise operations. Note that we use additive notation to emphasize that $G$ must be abelian. This algebra carries an induced action $\lt$ of $G$ given by
translation: $\lt_g(f)(g') = f(g'-g)$. For $h \in H$, we have $\lt_h(f)(g)=
\beta_h(f(g))$.

If $C$ is simple, then $\Prim(\Ind^G_H C)$ is homeomorphic to $G/H$:  for $g + H \in
G/H$, the corresponding primitive ideal $I_{g+H}$ is the ideal $\{f \in \Ind^G_H C :
f|_{g+H} = 0\}$ (see, for example, \cite[Proposition~6.6]{tfb}).

Now let $A$ be a discrete abelian group and let $\omega$ be a $\TT$-valued 2-cocycle on
$A$. Let $Z_\omega \subseteq A$ be the centre of $\omega$ as in~\eqref{eq:Zomega}. The
antisymmetric bicharacter $\omega\omega^*$ descends to an antisymmetric bicharacter
$(\omega\omega^*)\tilde{}$ of $B := A/Z_{\omega}$. There is a cocycle $\tilde{\omega} \in
Z^2(B, \TT)$ such that
\[
\tilde{\omega}\tilde{\omega}^*(a + Z_\omega, a' + Z_\omega) = (\omega\omega^*)(a, a')
\text{ for all $a, a' \in A$}.
\]
By construction, the antisymmetric bicharacter $(\omega\omega^*)\tilde{}$ is
nondegenerate in the sense that $g + Z_\omega \mapsto (\omega\omega^*)\tilde{}(g +
Z_\omega, \cdot)$ is injective (as a map from $B$ to its dual $\widehat{B}$).

There is an action $\beta^A : \widehat{A} \to \Aut(C^*(A, \omega))$ such that
$\beta^A_t(U_a) = \chi(a) U_a$ for $\chi \in \widehat{A}$. Recall from \cite[Lemma~5.11
and Theorem~6.3]{OPT}\footnote{There is a typographical error in the statement of
\cite[Theorem~6.3]{OPT}: $G/G_{\mathscr{Z}}$ should read $G_{\mathscr{Z}}$.} that there
is an $\widehat{A}$-equivariant isomorphism
\[
C^*(A, \omega) \cong
    \Ind^{\widehat{A}}_{\widehat{B}} C^*(B, \tilde{\omega}),
\]
and that $C^*(B, \tilde{\omega})$ is simple. Hence $\Prim(C^*(A, \omega)) \cong
\widehat{A}/\widehat{B} \cong \widehat{Z_{\omega}}$ \cite[Proposition~6.6]{tfb}. In
particular, in the situation of Lemma~\ref{lem:twisted Per}, we have
\begin{align*}
\Prim(C^*(\Ii_\Lambda, \sigma)) &\cong
\Prim\big(C_0(\Lambda^\infty) \otimes C^*(\Per(\Lambda), \omega) \big) \cong
\Lambda^\infty \times \widehat{Z_{\omega}}.
\end{align*}

Now resume the hypotheses and notation of Lemma~\ref{lem:twisted Per}. We construct a
Fell bundle over $\Hh_\Lambda$. We describe the fibres of the bundle and the
multiplication and involution operations first, and then prove in
Proposition~\ref{prp:bundle} that there is a topology compatible with these operations
under which we obtain the desired Fell bundle.  We write $C^*(\Hh; \sB)$ for the full
$C^*$-algebra of a Fell bundle $\sB$ over a groupoid $\Hh$, and $C^*_r(\Hh; \sB)$ for the
corresponding reduced $C^*$-algebra. We make the convention that $C^*(\Hh^{(0)}; \sB)$
denotes the full $C^*$-algebra of the restriction of $\sB$ to the unit space; this
$C^*(\Hh^{(0)}; \sB)$ is a $C_0(\Hh^{(0)})$-algebra. For background on Fell bundles over
\'etale groupoids, see \cite{Kumjian98, MW}.

We identify both $\Hh_\Lambda^0$ and $\Gg_\Lambda^{(0)}$ with $\Lambda^\infty$.  We
continue to write $\pi : \Gg_\Lambda \to \Hh_\Lambda$ for the quotient map, and we often
write $[\gamma]$ for $\pi(\gamma)$, and regard it as an equivalence class in
$\Gg_\Lambda$; that is $[\gamma] = \{\alpha \in \Gg_\Lambda : \pi(\alpha) = \pi(\gamma)\}
= \gamma \cdot \Ii_\Lambda$.

We define $A_x = C^*(\Per(\Lambda), \omega)$ for $x \in \Lambda^\infty$, and we write
$\sA_\Lambda$ for the trivial bundle $\Lambda^\infty \times C^*(\Per(\Lambda), \omega)$.
So $C^*(\Ii_\Lambda, \sigma) \cong C^*(\Lambda^\infty; \sA_\Lambda)$, and $A_x$ is the
fibre of $\sA_\Lambda$ over $x$. For $[\gamma] \in \Hh_\Lambda$, we let
$B^\circ_{[\gamma]} := C_c([\gamma])$ as a vector space over $\CC$, and we define
multiplication $B^\circ_{[\alpha]} \times B^\circ_{[\beta]} \to B^\circ_{[\alpha\beta]}$
where $s(\alpha) = r(\beta)$, and involution $B^\circ_{[\alpha]} \to
B^\circ_{[\alpha^{-1}]}$ by
\[
(f * g)(\gamma) = \sum_{\substack{\eta\zeta = \gamma,\\ \eta \in [\alpha], \zeta \in [\beta]}}
\sigma(\eta,\zeta)f(\eta)g(\zeta)
\qquad\text{ and }\qquad
f^*(\gamma) = \overline{\sigma(\gamma, \gamma^{-1})} \overline{f(\gamma^{-1})}.
\]

We regard each $B^\circ_{[x]}$ (where $x \in \Lambda^\infty = \Hh_\Lambda^{(0)}$) as a
dense subspace of $A_x$, and endow it with the norm inherited from $A_x$. We write $B_x
:= \overline{B^\circ_{[x]}} \cong A_x$, and identify $B_x$ and $A_x$. For $[\gamma] \in
\Hh_\Lambda$, we define an $A_{s(\gamma)}$-valued inner product on $B^\circ_{[\gamma]}$
by $\langle f, g\rangle_{s(\gamma)} := f^* * g$. Then we obtain a norm on
$B^\circ_{[\gamma]}$ by $\|f\| = \|\langle f, f\rangle_{s(\gamma)}\|^{1/2}$, and we write
$B_{[\gamma]}$ for the completion of $B^\circ_{[\gamma]}$ in this norm. Note that
$B_{[\gamma]}$ is a full right $A_{s(\gamma)}$-Hilbert module. It is straightforward to
show that $\|f * g\| \le \| f\|\|g\|$ for all $f \in B^\circ_{[\alpha]}$ and $g \in
B^\circ_{[\beta]}$ when $s(\alpha) = r(\beta)$, and involution extends to an isometric
conjugate linear map $B_{[\alpha]} \to B_{[\alpha^{-1}]}$.

\begin{prop}\label{prp:bundle}
With notation as above there is a unique topology on $\sB_\Lambda = \bigsqcup_{[\gamma]
\in \Hh_\Lambda} B_{[\gamma]}$ under which it is a Banach bundle and such that for each
$f \in C_c(\Gg_\Lambda, \sigma)$, the section $[\gamma] \mapsto f|_{[\gamma]}$ is norm
continuous. Under this topology, the space $\sB_\Lambda$ is a saturated continuous Fell
bundle over $\Hh_\Lambda$.
\end{prop}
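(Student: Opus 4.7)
The plan is to build the topology via the standard Hofmann--Fell--Doran criterion (see \cite{Kumjian98, MW}): given a vector space $\Gamma$ of sections of a disjoint union of Banach spaces such that $\Gamma$ evaluates to a dense subspace of each fibre and the function $x \mapsto \|s(x)\|$ is continuous for every $s\in\Gamma$, there is a unique Banach bundle topology making every element of $\Gamma$ a continuous section; closure of $\Gamma$ under $*$ and ${}^*$ then upgrades this to a continuous Fell bundle structure. I would take
\[
    \Gamma := \{\hat f : f \in C_c(\Gg_\Lambda, \sigma)\},\qquad \hat f([\gamma]) := f|_{[\gamma]}.
\]
Each $\hat f([\gamma])$ lies in $B^\circ_{[\gamma]} = C_c([\gamma])$: because $\Ii_\Lambda$ is closed in $\Gg_\Lambda$ by Corollary~\ref{cor:ILambda}, the orbit $[\gamma] = \gamma\cdot\Ii_\Lambda$ is closed in $\Gg_\Lambda$; moreover each $[\gamma]$ is discrete (any sufficiently small $Z(\mu,\nu)$ containing a given point of $[\gamma]$ contains no other), so $f|_{[\gamma]}$ is continuous with finite support. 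The family $\Gamma$ is stable under the bundle operations since the bundle product $\hat f * \hat g$, evaluated on $[\alpha\beta]$, is precisely the restriction of the $\sigma$-twisted convolution $f*g \in C_c(\Gg_\Lambda,\sigma)$, because $[\alpha]\cdot[\beta]\subseteq[\alpha\beta]$; the involution is compatible for the analogous reason.

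For density on each fibre I would use that $[\gamma]$ is closed in the locally compact Hausdorff space $\Gg_\Lambda$, so any element of $C_c([\gamma]) = B^\circ_{[\gamma]}$ extends by a Tietze-type argument to some $f \in C_c(\Gg_\Lambda)$; hence $\{\hat f([\gamma]) : f \in C_c(\Gg_\Lambda)\} \supseteq B^\circ_{[\gamma]}$, which is dense in $B_{[\gamma]}$ by construction.

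The main obstacle is continuity of the fibrewise norm $[\gamma] \mapsto \|\hat f([\gamma])\|$, needed for a genuinely continuous (not merely upper semicontinuous) Fell bundle. Here I would exploit the fact that, by Lemma~\ref{lem:twisted Per}, the bundle $\sA_\Lambda$ is trivial: $\|\hat f([\gamma])\|^2 = \|\langle \hat f([\gamma]), \hat f([\gamma])\rangle_{s(\gamma)}\|$ is computed in the \emph{same} $C^*$-algebra $C^*(\Per(\Lambda),\omega)$ for every $\gamma$. Parameterising $[\gamma] = \{\gamma\cdot(s(\gamma),p,s(\gamma)) : p \in \Per(\Lambda)\}$, one writes
\[
    \langle \hat f([\gamma]), \hat f([\gamma])\rangle_{s(\gamma)} = \sum_{p\in \Per(\Lambda)} c_p(\gamma)\, U_p,
\]
where only finitely many $p$ contribute (since $f$ is compactly supported) and each $c_p(\gamma)$ is a finite sum of products of values of $f$ and of $\sigma$ evaluated at points of $\Gg_\Lambda$ that vary continuously with $\gamma$. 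Thus $\gamma\mapsto\langle \hat f([\gamma]), \hat f([\gamma])\rangle_{s(\gamma)}$ takes values in a fixed finite-dimensional subspace of $C^*(\Per(\Lambda),\omega)$ with continuously varying coefficients, and its norm is therefore continuous in $\gamma$; the norm descends through $\pi$ since elements of $\Ii_\Lambda$ merely reparametrise $[\gamma]$ without altering $\|\hat f([\gamma])\|$.

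With (i)--(iii) in hand I would invoke the Hofmann--Fell--Doran theorem to produce the unique Banach bundle topology in which all $\hat f$ are continuous sections, and observe that stability of $\Gamma$ under convolution and involution promotes $\sB_\Lambda$ to a continuous Fell bundle. Saturation $\overline{B_{[\alpha]}\cdot B_{[\beta]}} = B_{[\alpha\beta]}$ is then straightforward: given any $h \in B^\circ_{[\alpha\beta]}$ one writes it as a convolution $\hat f * \hat g$ by choosing $f, g \in C_c(\Gg_\Lambda)$ supported in open bisections about $\alpha$ and $\beta$, and invoking the density already established.
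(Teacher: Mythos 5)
Your strategy is the same as the paper's: both proofs invoke Fell's existence theorem for Banach bundles \cite[Proposition~10.4]{Fell77} applied to the family of sections $\hat f : [\gamma]\mapsto f|_{[\gamma]}$ for $f\in C_c(\Gg_\Lambda,\sigma)$, both obtain pointwise density by extending elements of $B^\circ_{[\gamma]}=C_c([\gamma])$ (closed and discrete in $\Gg_\Lambda$, as you note) to elements of $C_c(\Gg_\Lambda,\sigma)$, and both derive norm-continuity from the trivialisation $C^*(\Ii_\Lambda,\sigma)\cong C_0(\Lambda^\infty)\otimes C^*(\Per(\Lambda),\omega)$ of Lemma~\ref{lem:twisted Per}. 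The paper's norm-continuity step is terser (it writes $\|f|_{[\gamma]}\|=\|\langle f|_{[\gamma]},f|_{[\gamma]}\rangle_{s(\gamma)}\|^{1/2}$ and composes with the source map); your coefficientwise computation of $\langle \hat f([\gamma]),\hat f([\gamma])\rangle_{s(\gamma)}=\sum_p c_p(\gamma)U_p$ with locally finitely many continuously varying coefficients is a more explicit rendering of the same reduction, and correctly tracks the dependence of the inner product on $[\gamma]$ itself rather than only on $[s(\gamma)]$.

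One step is wrong as stated: the identity $\hat f([\alpha])*\hat g([\beta])=(f*g)|_{[\alpha\beta]}$, where $f*g$ is the convolution in $C_c(\Gg_\Lambda,\sigma)$. The bundle product sums only over factorisations $\eta\zeta$ with $\eta\in[\alpha]$ and $\zeta\in[\beta]$, whereas the groupoid convolution at a point of $[\alpha\beta]$ sums over \emph{all} factorisations; grouping these by the coset of $\eta$ gives
\[
(f*g)|_{[\alpha\beta]}=\sum_{[\alpha']} \big(f|_{[\alpha']}\big)*\big(g|_{[\alpha'^{-1}][\alpha\beta]}\big),
\]
a locally finite sum over all $[\alpha']\in\Hh_\Lambda$ with $r([\alpha'])=r([\alpha\beta])$, not the single term you claim (the cross terms need not vanish even when $f$ and $g$ are supported in basic bisections, since the unique point of such a bisection over $r(\alpha)$ need not lie in $[\alpha]$). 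So $\Gamma$ is not literally closed under the fibrewise operations. This does not affect the Banach-bundle construction---Fell's criterion needs only norm-continuity and pointwise density, which is all the paper's proof establishes explicitly---but it does mean your justification of continuity of multiplication and involution, and hence of the Fell-bundle assertion, is broken as written. It can be repaired: the displayed decomposition expresses $\hat f([\alpha])*\hat g([\beta])$ locally as $(f*g)|_{[\alpha\beta]}$ minus finitely many continuous correction terms of the same form, or one can verify continuity of the operations directly on elementary tensors using the coefficient functions $c_p$ you already introduced. Your saturation argument is fine once density is in hand.
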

\begin{proof}
The first assertion follows from \cite[Proposition~10.4]{Fell77} once we show that the
sections $[\gamma] \mapsto f|_{[\gamma]}$ associated to elements $f \in C_c(\Gg_\Lambda,
\sigma)$ are norm continuous and the ranges of these sections are pointwise dense.  Let
$f \in C_c(\Gg_\Lambda, \sigma)$. Then the restriction $[x] \mapsto \big\|f|_{[x]}\big\|$
is continuous on $\Hh_\Lambda^{(0)} = \Lambda^\infty$ because $C^*(\Ii_\Lambda, \sigma)
\cong C_0(\Lambda^\infty) \otimes C^*(\Per(\Lambda), \omega)$ by Lemma~\ref{lem:twisted
Per}. To show  $[\gamma] \mapsto \big\|f|_{[\gamma]}\big\|$ is continuous,
note that $\|f|_{[\gamma]}\| =  \|(f^**f)|_{[s(\gamma)]}\|^{1/2}$.
Since $[x] \mapsto \big\|(f^**f)|_{[x]}\big\|$ is
continuous on $\Hh_\Lambda^{(0)}$, and since the source map $[\gamma] \mapsto
[s(\gamma)]$ in $\Hh_\Lambda$ is continuous, it follows that $[\gamma] \mapsto
\big\|f|_{[\gamma]}\big\|$ is continuous.

It is straightforward to check that for each $b \in B^\circ_{[\gamma]}$, there exists $f
\in C_c(\Gg_\Lambda, \sigma)$ such that $b = f|_{[\gamma]}$.  Hence, for each $[\gamma]$,
$\{ f|_{[\gamma]} : f \in C_c(\Gg_\Lambda, \sigma) \}$ is dense in $B_{[\gamma]}$.
\end{proof}

We now establish that $C^*(\Lambda, c)$ can be identified with the $C^*$-algebra of the
Fell bundle we have just constructed. Recall that if $\sB$ is a Fell bundle over a
groupoid $\Gg$, then $C^*(\Gg; \sB)$, the $C^*$-algebra of the bundle, is a universal
completion of the algebra of compactly supported sections of the bundle, and $C^*_r(\Gg;
\sB)$ is the corresponding reduced $C^*$-algebra.

\begin{thm}\label{thm:bundleiso}
Suppose that $\Lambda$ is a row-finite cofinal $k$-graph with no sources, and take $c \in
\Zcat2(\Lambda, \TT)$. Let $\Hh_\Lambda$ denote the quotient of $\Gg_\Lambda$ by the
interior $\Ii_\Lambda$ of its isotropy, and let $\sB_\Lambda$ be the Fell bundle over
$\Hh_\Lambda$ described in Proposition~\ref{prp:bundle}. Then $C^*(\Hh_\Lambda;
\sB_\Lambda) = C_r^*(\Hh_\Lambda; \sB_\Lambda)$, and there is an isomorphism $\pi :
C^*(\Lambda, c) \cong C^*(\Hh_\Lambda; \sB_\Lambda)$ such that $\pi(s_\lambda)([\gamma])
= 1_{Z(\lambda, s(\lambda))}|_{[\gamma]}$ for all $\lambda \in \Lambda$ and $\gamma \in
\Gg_\Lambda$.
\end{thm}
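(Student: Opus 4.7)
The plan is to build an isomorphism $\Psi: C^*(\Gg_\Lambda, \sigma) \to C^*(\Hh_\Lambda; \sB_\Lambda)$ by restriction of functions to equivalence classes, and then compose with the known isomorphism $C^*(\Lambda,c) \cong C^*(\Gg_\Lambda, \sigma_c)$ of Corollary~7.7 of \cite{KPS4} together with $C^*(\Gg_\Lambda, \sigma_c) \cong C^*(\Gg_\Lambda, \sigma)$ implemented via multiplication by the coboundary $\tilde b$ of Proposition~\ref{prp:constant in Ii} (as in \cite[Proposition~II.1.2]{Ren}). Because $\tilde b$ is trivial off $\Ii_\Lambda$ and equal to $1$ on units, the function $1_{Z(\lambda, s(\lambda))}$ is fixed by that second isomorphism, so the stated formula for $\pi(s_\lambda)$ will fall out automatically once $\Psi$ is constructed by restriction.

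I would define $\Psi$ on $C_c(\Gg_\Lambda, \sigma)$ by $\Psi(f)([\gamma]) := f|_{[\gamma]} \in B^\circ_{[\gamma]}$. Proposition~\ref{prp:bundle} ensures that this is a compactly supported continuous section of $\sB_\Lambda$. Splitting $\sum_{\eta\zeta = \gamma}$ in the twisted convolution according to the classes $[\eta], [\zeta]$ identifies the $*$-operations on $C_c(\Gg_\Lambda, \sigma)$ with those on the Fell-bundle section algebra, so $\Psi$ is a $*$-algebra homomorphism. A standard partition-of-unity argument on the \'etale groupoid $\Hh_\Lambda$ (Corollary~\ref{cor:ILambda}) shows that the image of $\Psi$ is dense in $\Gamma_c(\Hh_\Lambda; \sB_\Lambda)$, hence in $C^*(\Hh_\Lambda; \sB_\Lambda)$.

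To promote $\Psi$ to an isometric $C^*$-isomorphism I would use amenability. Both $\Gg_\Lambda$ and $\Hh_\Lambda$ are amenable \'etale groupoids (by \cite[Corollary~3.5]{KP2000} and Corollary~\ref{cor:ILambda}), which yields $C^*(\Gg_\Lambda, \sigma) = C^*_r(\Gg_\Lambda, \sigma)$ and, via the standard passage from full to reduced for Fell bundles over amenable \'etale groupoids in the spirit of \cite{MW, Kumjian98}, also $C^*(\Hh_\Lambda; \sB_\Lambda) = C^*_r(\Hh_\Lambda; \sB_\Lambda)$, establishing the first claim of the theorem. The reduced norms on both sides are computed from regular representations at units $x \in \Lambda^\infty$: on the left acting on $\ell^2((\Gg_\Lambda)_x)$ with the $\sigma$-twist, and on the right acting on the Hilbert space obtained by pulling back the right module $\bigoplus_{[\gamma] \in (\Hh_\Lambda)_x} B_{[\gamma]}$ through a pure state on $A_x$. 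The decomposition $(\Gg_\Lambda)_x = \bigsqcup_{[\gamma] \in (\Hh_\Lambda)_x} [\gamma]$ together with Lemma~\ref{lem:twisted Per} supplies a unitary intertwining the two representations.

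The main obstacle lies in this intertwining: one must carefully match the cocycle twist on $(\Gg_\Lambda)_x$ with the Hilbert-module structure on each $B_{[\gamma]}$, which is governed by $\sigma|_{\Ii_\Lambda} = \omega \times 1_{\Lambda^\infty}$, and verify compatibility with a chosen pure state on $A_x = C^*(\Per(\Lambda), \omega)$. Once this matching is in place, $\Psi$ is isometric on $C_c(\Gg_\Lambda, \sigma)$ and extends to the required $C^*$-isomorphism, and tracing $s_\lambda$ through the composition $C^*(\Lambda,c) \to C^*(\Gg_\Lambda,\sigma_c) \to C^*(\Gg_\Lambda, \sigma) \xrightarrow{\Psi} C^*(\Hh_\Lambda; \sB_\Lambda)$ delivers $\pi(s_\lambda)([\gamma]) = 1_{Z(\lambda, s(\lambda))}|_{[\gamma]}$.
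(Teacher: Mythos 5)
Your proposal follows a genuinely different route from the paper's. The paper does not construct a $*$-isomorphism $C^*(\Gg_\Lambda,\sigma)\to C^*(\Hh_\Lambda;\sB_\Lambda)$ by disintegration at all: it defines $t_\lambda([\gamma]):=1_{Z(\lambda,s(\lambda))}|_{[\gamma]}$, invokes Theorem~6.7 of \cite{KPS4} to see that these form a Cuntz--Krieger $(\Lambda,c)$-family directly in $C^*(\Hh_\Lambda;\sB_\Lambda)$, obtains $\pi$ from the universal property of $C^*(\Lambda,c)$, proves injectivity by building a fibrewise gauge action $\beta_z^{[\gamma]}(f)(\alpha)=z^{\tilde d(\alpha)}f(\alpha)$ and applying the gauge-invariant uniqueness theorem, and proves surjectivity by a two-point separation argument using the elements $z s_\mu s_\nu^*$ and the $\delta_\alpha$. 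Your restriction map $\Psi$ with a reduced-norm comparison is the \cite{DKR}-style disintegration that the introduction cites as motivation but that the proof deliberately avoids; it is workable but forces you to do the hardest analytic step (matching the $\sigma$-twisted regular representation on $\ell^2((\Gg_\Lambda)_x)$ with the representation induced from the Hilbert $A_x$-module $\bigoplus_{[\gamma]}B_{[\gamma]}$), which the paper's universal-property argument sidesteps entirely.

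Two points in your sketch need repair. First, inducing through ``a pure state on $A_x$'' does not compute the reduced Fell-bundle norm: $A_x=C^*(\Per(\Lambda),\omega)$ is nonsimple exactly when $Z_\omega\neq\{0\}$ (the case of interest), and a pure state then typically has non-faithful GNS representation, so the induced representation only gives an inequality of norms. You must induce through a representation of $A_x$ that is faithful --- the natural choice is the GNS representation of the canonical trace, i.e.\ the regular representation on $\ell^2(\Per(\Lambda))$, which is exactly what makes $B_{[\gamma]}\otimes_{A_x}\ell^2(\Per(\Lambda))\cong\ell^2([\gamma])$ and hence $\ell^2((\Gg_\Lambda)_x)\cong\bigoplus_{[\gamma]}\ell^2([\gamma])$ work. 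Second, your claim that the coboundary isomorphism $f\mapsto\tilde b\,f$ fixes $1_{Z(\lambda,s(\lambda))}$ is unjustified: $\tilde b$ is only guaranteed to equal $1$ off $\Ii_\Lambda$ and at $0$ and the chosen generators $g_j$ of $\Per(\Lambda)$, whereas $Z(\lambda,s(\lambda))\cap\Ii_\Lambda$ can be nonempty and lie outside the unit space whenever $0\neq d(\lambda)\in\Per(\Lambda)$ and $\lambda x=x$ for some $x$ (already for $\Lambda=T_1$ and $d(\lambda)=2$ one has $\tilde b(x,2,x)=b_x(2)$, which need not be $1$). So the stated formula for $\pi(s_\lambda)$ does not ``fall out automatically''; the paper handles this by verifying the Cuntz--Krieger relations for the $t_\lambda$ directly with respect to $\sigma$ rather than transporting the generators through the coboundary, and you would need either to do the same or to track the factor $\tilde b|_{Z(\lambda,s(\lambda))}$ explicitly.
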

\begin{proof}
Corollary~\ref{cor:ILambda} says that $\Hh_\Lambda$ is amenable. Hence $C^*(\Hh_\Lambda;
\sB_\Lambda) = C^*_r(\Hh_\Lambda; \sB_\Lambda)$ by \cite[Theorem~1]{SimWil:IJMxx}. Define
elements $t_\lambda$ of $C_c(\Hh_\Lambda; \sB_\Lambda)$ by $t_\lambda([\gamma]) :=
1_{Z(\lambda, s(\lambda))}|_{[\gamma]} \in B^\circ_{[\gamma]} \subseteq B_{[\gamma]}$.
Theorem~6.7 of \cite{KPS4} shows that the $1_{Z(\lambda, s(\lambda))}$ form a
Cuntz--Krieger $(\Lambda, c)$-family in $C^*(\Gg_\Lambda, \sigma)$. It follows that the
$t_\lambda$ constitute a Cuntz--Krieger $(\Lambda, c)$-family in $C^*(\Hh_\Lambda;
\sB_\Lambda)$. So the universal property of $C^*(\Lambda, c)$ yields a homomorphism $\pi
: C^*(\Lambda, c) \to C^*(\Hh_\Lambda; \sB_\Lambda)$.

To see that $\pi$ is injective, we aim to apply the gauge-invariant uniqueness theorem
\cite[Corollary~7.7]{KPS4}. The projections $\{t_v : v \in \Lambda^0\}$ are nonzero
because the $Z(v)$ are nonempty, so we just need to show that there is an action $\beta$
of $\TT^k$ on $C^*(\Hh_\Lambda; \sB_\Lambda)$ such that $\beta_z(t_\lambda) =
z^{d(\lambda)}t_\lambda$ for all $\lambda \in \Lambda$.  Let $\tilde{d} : \Gg_\Lambda \to
\ZZ^k$ be the canonical cocycle $(x, m, y) \mapsto m$. For $z \in \TT^k$, $[\gamma] \in
\Hh_\Lambda$ and $f \in B^\circ_{[\gamma]}$ define $\beta^{[\gamma]}_z(f) \in
B^\circ_{[\gamma]}$ by $\beta^{[\gamma]}_z(f)(\alpha) = z^{\tilde{d}(\alpha)} f(\alpha)$.
Simple calculations show that $\beta_z^{[\gamma]}(f)*\beta_z^{[\gamma']}(g) =
\beta_z^{[\gamma\gamma']}(f * g)$ and that $\beta_z^{[\gamma^{-1}]}(f^*) =
\beta_z^{[\gamma]}(f)^*$. For $x \in \Gg_\Lambda^{(0)}$, the map $\beta_z^{[x]} :
C_c(\Ii_\Lambda, \sigma) \to C_c(\Ii_\Lambda, \sigma)$ extends to the canonical action of
$\TT^k$ on $A_x = C^*(\Per(\Lambda), \omega)$, and so is isometric. It follows that the
$\beta_z^{[\gamma]}$ are isometric for the norms on the fibres $B_{[\gamma]}$ of
$\sB_\Lambda$. For $f \in C_c(\Gg_\Lambda, \sigma)$ supported on a basic open set
$Z(\mu,\nu)$, the map $[\gamma] \mapsto \beta_z^{[\gamma]}(f)$ is clearly continuous.
Since such $f$ span $C_c(\Gg_\Lambda, \sigma)$, and by definition of the topology on
$\sB$ (see Proposition~\ref{prp:bundle}), it follows that if $p : \sB_\Lambda \to
\Hh_\Lambda$ is the bundle map, then $\xi \mapsto \beta_z^{p(\xi)}(\xi)$ is continuous.
So for fixed $z \in \TT^k$, the collection $\beta^{[\gamma]}_z$ determines an
automorphism of the bundle $\sB_\Lambda$, and hence induces an automorphism $\beta_z$ of
$C^*(\Hh_\Lambda; \sB_\Lambda)$. It is routine to check that $z \mapsto \beta_z$ is an
action of $\TT^k$ on $C^*(\Hh_\Lambda; \sB_\Lambda)$ such that $\beta_z(f)([\gamma]) =
\beta^{[\gamma]}_z(f([\gamma]))$ for $f \in C_c(\Hh_\Lambda; \sB_\Lambda)$ and $z \in
\TT^k$. In particular, each $\beta_z(t_\lambda) = z^{d(\lambda)}t_\lambda$, and so the
gauge-invariant uniqueness theorem \cite[Corollary 7.7]{KPS4} shows that $\pi$ is
injective as required.

It remains to show that $\pi$ is surjective. For this, it suffices to fix $\gamma,
\gamma' \in \Gg_\Lambda$ such that $[\gamma] \not= [\gamma']$, and show that the set
$\{\pi(a)([\gamma]) : a \in C^*(\Lambda ,c), \pi(a)[\gamma'] = 0\}$ is dense in
$B_{[\gamma]}$. Since $\pi$ is linear, it will suffice to show that for each $\alpha \in
[\gamma]$ there exists $a \in C^*(\Lambda , c)$ such that $\pi(a)([\gamma']) = 0$ and
$\pi(a)([\gamma]) = \delta_\alpha$. Fix $\alpha \in [\gamma]$, say $\tilde{d}(\alpha) =
m$. Choose a compact open bisection $U \subseteq \tilde{d}^{-1}(m)$. If $m \not\in
\Per(\Lambda) + \tilde{d}(\gamma')$, then $U \cap [\gamma'] = \emptyset$; otherwise,
since $[\gamma'] \not= [\alpha]$, either $r(\gamma') \not= r(\alpha)$ or $s(\gamma')
\not= s(\alpha)$, and so we may shrink $U$ to ensure that $U \cap [\gamma'] = \emptyset$.
By definition of the topology on $\Gg_\Lambda$, there exist $\mu,\nu \in \Lambda$ such
that $\alpha \in Z(\mu,\nu) \subseteq U$. The function $t_\mu t^*_\nu$ takes values in
$\TT$ on $Z(\mu,\nu)$, and so there is a complex scalar $z \in \TT$ such that $a := z
s_\mu s^*_\nu$ satisfies $\pi(a)([\gamma]) = \delta_\alpha$ and $\pi(a)([\gamma']) = 0$
as claimed.
\end{proof}

To prove Theorem~\ref{thm:main simple}, we identify the action of $\Hh_\Lambda$ on
$\operatorname{Prim}(C^*(\Hh_\Lambda^{(0)}; \sB_\Lambda))$ obtained from~\cite{IW} with
the action $\theta$ of Lemma~\ref{lem:tilde-r cocycle}. To do this, we first give an
explicit description of the action from~ \cite{IW}.

\begin{lem}\label{lem:unitaries implement}
Let $\sB$ be a Fell bundle over a groupoid $\Gg$ with unital fibres over $\Gg^{(0)}$. Let
$\gamma \in \Gg$ and suppose that $u \in B_\gamma$ is unitary in the sense that $u^*u =
1_{A_{s(\gamma)}}$ and $uu^* = 1_{A_{r(\gamma)}}$. For any ideal $I$ of $A_{s(\gamma)}$,
we have $uIu^* = \clsp\{xay^* : x,y \in B_\gamma, a \in I\}$. In particular, $I \mapsto
uIu^*$ is the map from $\Prim(A_{s(\gamma)})$ to $\Prim(A_{r(\gamma)})$ described in
\cite[Lemma~2.1]{IW}.
\end{lem}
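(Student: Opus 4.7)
The plan is to prove the set equality $uIu^* = \{xay^* : x,y \in B_\gamma, a \in I\}$ directly (not merely an equality of closed linear spans), and then leverage the fact that $a \mapsto uau^*$ is a $*$-isomorphism $A_{s(\gamma)} \to A_{r(\gamma)}$. The unitarity hypothesis on $u$ makes $\phi_u : a \mapsto uau^*$ multiplicative and $*$-preserving, with two-sided inverse $b \mapsto u^*bu$, so it is a $*$-isomorphism; in particular it is isometric, and $uIu^*$ is a closed ideal of $A_{r(\gamma)}$.

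For the substantive containment, given $x, y \in B_\gamma$ and $a \in I$, I would use $uu^* = 1_{A_{r(\gamma)}}$ together with the Fell-bundle grading to write $x = u(u^*x)$ and $y = u(u^*y)$, noting that $u^*x$ and $u^*y$ lie in $B_{\gamma^{-1}}B_\gamma \subseteq B_{s(\gamma)} = A_{s(\gamma)}$. Then
\[
xay^* = u\bigl((u^*x)\, a\, (u^*y)^*\bigr) u^*,
\]
and the bracketed element lies in $I$ because $I$ is an ideal of $A_{s(\gamma)}$. This gives $\{xay^* : x,y \in B_\gamma, a \in I\} \subseteq uIu^*$. The reverse inclusion is trivial: take $x = y = u$. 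Since $uIu^*$ is itself a closed linear subspace, this set-level equality immediately upgrades to $uIu^* = \clsp\{xay^* : x,y \in B_\gamma, a \in I\}$.

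The final claim is then automatic: the map from $\Prim(A_{s(\gamma)})$ to $\Prim(A_{r(\gamma)})$ described in \cite[Lemma~2.1]{IW} sends $I$ to the closed linear span on the right, which we have identified with $\phi_u(I) = uIu^*$. Since $\phi_u$ is a $*$-isomorphism, it carries primitive ideals to primitive ideals, completing the argument. I do not anticipate a genuine obstacle; the only subtlety is to track which fibre each factor in the product $u(u^*x)\, a\, (u^*y)^* u^*$ belongs to, so that the manipulations remain legitimate within the Fell-bundle structure.
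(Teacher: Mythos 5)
Your proof is correct and follows essentially the same route as the paper: the paper likewise obtains the nontrivial inclusion by inserting $uu^*=1_{A_{r(\gamma)}}$ on both sides of $x_iay_i^*$ to write it as $u(u^*x_i\,a\,y_i^*u)u^*$ with the bracketed element in $I$, and gets the other inclusion by taking $x=y=u$. Your extra observations (that $u^*x\in B_{\gamma^{-1}}B_\gamma\subseteq A_{s(\gamma)}$ and that $uIu^*$ is closed because $\Ad_u$ is an isometric $*$-isomorphism) are exactly the points left implicit in the paper.
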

\begin{proof}
We clearly have $uIu^* \subseteq \clsp\{xay^* : x,y \in B_\gamma, a \in I\}$. For the
reverse inclusion, fix $x_i, y_i \in B_\gamma$ and $a_i \in I$, and observe that
\[
\sum_i x_i a_i y_i^*
	= \sum_i uu^* x_i a y_i uu^*
	= u \Big(\sum_i u^* x_i a y_i u\Big) u^*
	\in u I u^*.\qedhere
\]
\end{proof}

We can now identify the action described in \cite{IW} with that of Lemma~\ref{lem:tilde-r
cocycle}.

\begin{thm}\label{thm:action}
Let $\Lambda$ be a row-finite cofinal $k$-graph with no sources, and suppose that $c \in
\Zcat2(\Lambda, \TT)$. Take $\omega$ as in Proposition~\ref{prp:constant in Ii}, and let
$\Hh_\Lambda$ denote the quotient of $\Gg_\Lambda$ by the interior $\Ii_\Lambda$ of its
isotropy. Let $\sB_\Lambda$ be the Fell bundle over $\Hh_\Lambda$ described in
Proposition~\ref{prp:bundle}, and let $\sA_\Lambda$ denote the bundle of $C^*$-algebras
obtained by restricting $\sB_\Lambda$ to $\Hh_\Lambda^{(0)}$. There is a homeomorphism $i
: \operatorname{Prim}(C^*(\Hh_\Lambda^{(0)}; \sB_\Lambda)) \to \Lambda^\infty \times
\widehat{Z_\omega}$ that intertwines the action $\theta$ of $\Hh_\Lambda$ on
$\Lambda^\infty \times \widehat{Z}_\omega$ described in Lemma~\ref{lem:tilde-r cocycle}
and the action of $\Hh_\Lambda$ on $\operatorname{Prim}(C^*(\Hh_\Lambda^{(0)};
\sB_\Lambda))$ described in \cite{IW}.
\end{thm}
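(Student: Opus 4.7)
My proof plan has two parts: construct the homeomorphism $i$ by composing standard identifications of primitive ideal spaces, then verify the intertwining by exhibiting explicit unitaries in each fibre $B_{[\gamma]}$ that implement the Ionescu--Williams action via conjugation.

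For $i$, Lemma~\ref{lem:twisted Per} identifies $C^*(\Hh_\Lambda^{(0)};\sB_\Lambda) \cong C^*(\Ii_\Lambda, \sigma)$ with $C_0(\Lambda^\infty) \otimes C^*(\Per(\Lambda),\omega)$ as $C_0(\Lambda^\infty)$-algebras, so that $\Prim(C^*(\Hh_\Lambda^{(0)};\sB_\Lambda)) \cong \Lambda^\infty \times \Prim(C^*(\Per(\Lambda),\omega))$. The \cite{OPT} realisation $C^*(\Per(\Lambda),\omega) \cong \Ind^{\widehat{\Per(\Lambda)}}_{\widehat B} C^*(B,\tilde\omega)$ recalled after Lemma~\ref{lem:twisted Per} (with $B = \Per(\Lambda)/Z_\omega$ and $C^*(B,\tilde\omega)$ simple) then identifies the second factor with $\widehat{\Per(\Lambda)}/\widehat B \cong \widehat{Z_\omega}$ via restriction $\chi \mapsto \chi|_{Z_\omega}$, and the dual action $\beta^{\Per(\Lambda)}_\chi(u_p) = \chi(p) u_p$ corresponds under the induced-algebra isomorphism to translation, and so descends to translation by $\chi|_{Z_\omega}$ on $\widehat{Z_\omega}$.

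To compute the IW action concretely, for each $[\gamma] \in \Hh_\Lambda$ I will choose $\mu,\nu \in \Lambda$ with $\gamma \in Z(\mu,\nu)$ and set $u_\gamma := 1_{Z(\mu,\nu)}|_{[\gamma]} \in B^\circ_{[\gamma]}$. The crucial observation is that $[\gamma] \cap Z(\mu,\nu) = \{\gamma\}$: elements of $[\gamma] = \gamma \cdot \Ii_\Lambda$ have $\tilde d$-values in $\tilde d(\gamma) + \Per(\Lambda)$, and any two such that lie in $Z(\mu,\nu)$ must have the same $\tilde d$-value, forcing the $\Per(\Lambda)$-coordinate to be $0$. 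A short convolution calculation, using the cocycle identity $\sigma(\gamma,\gamma^{-1}) = \sigma(\gamma^{-1},\gamma)$, then shows $u_\gamma^* u_\gamma = 1_{A_{s(\gamma)}}$ and $u_\gamma u_\gamma^* = 1_{A_{r(\gamma)}}$, so that Lemma~\ref{lem:unitaries implement} applies and the IW map $\Prim(A_{s(\gamma)}) \to \Prim(A_{r(\gamma)})$ is given by $I \mapsto u_\gamma I u_\gamma^*$.

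Identifying the generator $u_p \in C^*(\Per(\Lambda),\omega) = A_{s(\gamma)}$ with $1_{\{(s(\gamma),p,s(\gamma))\}}$, the same manipulation in $\Gg_\Lambda \times_\sigma \TT$ carried out in the proof of Lemma~\ref{lem:r-alpha} yields
\[
u_\gamma u_p u_\gamma^* \;=\; r^\sigma_\gamma(p)\, u_p \;\in\; A_{r(\gamma)}.
\]
Hence conjugation by $u_\gamma$ implements the automorphism $\beta^{\Per(\Lambda)}_{r^\sigma_\gamma}$, and by the second paragraph this descends on $\Prim$ to translation by $r^\sigma_\gamma|_{Z_\omega} = \tilde r^\sigma_{[\gamma]}$, which by Lemma~\ref{lem:tilde-r cocycle} is exactly $\theta_{[\gamma]}$ on the fibre over $s(\gamma)$. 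The main obstacle I anticipate is confirming that $u_\gamma$ is genuinely a unitary (rather than just a partial isometry) so that Lemma~\ref{lem:unitaries implement} delivers the conjugation formula for the IW map; once this is in place, continuity of $i$ and of the intertwining is automatic from continuity of $r^\sigma$ and the definition of the quotient topology on $\Prim$.
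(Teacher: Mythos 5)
Your proposal is correct and follows essentially the same route as the paper: the paper also builds $i$ from Lemma~\ref{lem:twisted Per} together with the \cite{OPT} induced-algebra picture (with fibrewise maps $i_x(\delta_{(x,p,x)})(\chi) = \chi(p)U_{p+Z_\omega}$), and identifies the Ionescu--Williams action by conjugating by the point mass $\delta_\gamma \in B_{[\gamma]}$ --- which is exactly your $u_\gamma = 1_{Z(\mu,\nu)}|_{[\gamma]}$, since $[\gamma]\cap Z(\mu,\nu)=\{\gamma\}$ --- and invoking Lemma~\ref{lem:unitaries implement} and the relation $\delta_\gamma * \delta_{(s(\gamma),p,s(\gamma))} * \delta_\gamma^* = r^\sigma_\gamma(p)\,\delta_{(r(\gamma),p,r(\gamma))}$. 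The only difference is cosmetic: you spell out the unitarity check for $\delta_\gamma$ that the paper merely asserts.
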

\begin{proof}
Put $H := \Per(\Lambda)/Z_\omega$. Take $x \in \Lambda^\infty$. Then
\[
A_x = C^*(\Per(\Lambda), \omega) \cong \clsp\{\delta_{(x,p,x)} : p \in \Per(\Lambda)\},
\]
and there is an isomorphism $i_x : A_x \to \Ind^{\Per(\Lambda)\widehat{\;}}_{\widehat{H}}
C^*(H, \omega)$ such that
\[
i_x(\delta_{(x,p,x)})(\chi) = \chi(p)U_{p + Z_\omega}\quad\text{ for all $\chi \in \Per(\Lambda)\widehat{\;}$.}
\]
In particular $i_x$ is equivariant for the action of $\Per(\Lambda)\widehat{\;}$ on $A_x$
given by $\chi \cdot \delta_{(x,p,x)} = \chi(p)\delta_{(x,p,x)}$ and the action $\lt$ of
$\Per(\Lambda)\widehat{\;}$ on $\Ind^{\Per(\Lambda)\widehat{\;}}_{\widehat{H}} C^*(H,
\omega)$ by translation.

Take $\gamma \in \Gg_\Lambda$ and $p \in \Per (\Lambda)$. A straightforward calculation
in $\sB_\Lambda$ shows that
\[
\delta_\gamma * \delta_{(s(\gamma), p, s(\gamma))} * \delta_\gamma^*
	= r^\sigma_\gamma(p)\delta_{(r(\gamma),p,r(\gamma))}.
\]
The element $\delta_\gamma$ is a unitary in the fibre $B_{[\gamma]}$, and so
Lemma~\ref{lem:unitaries implement} shows that conjugation by $\delta_\gamma$ in
$\sB_\Lambda$ implements the homeomorphism $\alpha_{[\gamma]}$ of \cite{IW} from
$\Prim(A_{s(\gamma)}) \subseteq \Prim(C^*(\Hh_\Lambda^{(0)}; \sB_\Lambda))$ to
$\Prim(A_{r(\gamma)})$. We have $i_{r(\gamma)} \circ \Ad_{\delta_\gamma} \circ
i_{r(\gamma)}^{-1} = \lt_{r^\sigma_\gamma}$, and it follows that for $\chi \in
\Per(\Lambda)\widehat{\;}$ the primitive ideal $I_{\chi \widehat{H}}$ of
$\Ind^{\Per(\Lambda)\widehat{\;}}_{\widehat{H}} C^*(H, \omega)$ consisting of functions
that vanish on $\chi\widehat{H}$ satisfies
\[
\big(i_{r(\gamma)} \circ \Ad_{\delta_\gamma} \circ i_{r(\gamma)}^{-1}\big)(I_{\chi \widehat{H}})
	= I_{r^\sigma_\gamma \cdot \chi \widehat{H}}.
\]
That is, the induced map $(i_{r(\gamma)})_* \alpha_{[\gamma]} (i_{r(\gamma)})_*^{-1}$ on
$\Prim\big(\Ind^{\Per(\Lambda)\widehat{\;}}_{\widehat{H}} C^*(H, \omega)\big)$ is
translation by $\tilde{r}^\sigma_{[\gamma]}$. Lemma~\ref{lem:twisted Per} yields a
homeomorphism $i : \Prim(C^*(\Hh^{(0)}; \sB_\Lambda )) \to \Lambda^\infty \times
\widehat{Z}_\omega$ such that $i|_{\Prim(A_x)} = i_x$, and this homeomorphism does the
job.
\end{proof}

Recall from \cite[Proposition~3.6]{Kumjian98} that if $\sB$ is a Fell bundle over an
\'etale\footnote{The statement in \cite{Kumjian98} says ``$r$-discrete,'' but \'etale is
meant.} groupoid $\Hh$, then restriction of compactly supported sections to $\Hh^{(0)}$
extends to a faithful conditional expectation $P : C^*_r(\Hh; \sB) \to C^*(\Hh^{(0)};
\sB)$.

\begin{lem}\label{lem:bundle uniqueness}
Suppose that $\Hh$ is an \'etale topologically principal groupoid and that $\sB$ is a
Fell bundle over $\Hh$. If $I$ is a nonzero ideal of $C^*_r(\Hh; \sB)$, then $I \cap
C^*(\Hh^{(0)}; \sB) \not= \{0\}$.
\end{lem}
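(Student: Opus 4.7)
The plan is to adapt the classical Renault-type argument (cf.\ \cite{Ren}) to the Fell-bundle setting, using the faithful conditional expectation $P$ and topological principality to preclude ideals having trivial intersection with the diagonal subalgebra. The argument will be by contradiction: if $I \cap C^*(\Hh^{(0)}; \sB) = \{0\}$, then the composition $\iota : C^*(\Hh^{(0)}; \sB) \hookrightarrow C^*_r(\Hh; \sB) \twoheadrightarrow C^*_r(\Hh; \sB)/I$ is an injective $*$-homomorphism of $C^*$-algebras, hence isometric. The goal is to produce a positive element of $C^*(\Hh^{(0)}; \sB)$ whose image under the quotient map $q$ is much smaller in norm, contradicting this isometry.

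To set this up, I would take a positive $0 \neq a \in I$ and set $\alpha := \|P(a)\|$, which is positive because $P$ is faithful. I would then approximate $a$ by a compactly supported section $f \in C_c(\Hh; \sB)$ with $\|a - f\| < \alpha/10$, so that $\|P(f)\| > 9\alpha/10$. Using that the units with trivial isotropy are dense (topological principality) together with the appropriate continuity of the fibrewise norm on sections of $\sB|_{\Hh^{(0)}}$, I would select $x_0 \in \Hh^{(0)}$ with $\|P(f)(x_0)\| > 9\alpha/10$ and $\Hh^{x_0}_{x_0} = \{x_0\}$.

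The key geometric step, which I expect to be the main obstacle, is to produce an open neighbourhood $V$ of $x_0$ in $\Hh^{(0)}$ with $r^{-1}(V) \cap s^{-1}(V) \cap \operatorname{supp}(f) \subseteq \Hh^{(0)}$. This is a standard compactness argument combining compactness of $K := \operatorname{supp}(f)$, openness of $\Hh^{(0)}$ in the \'etale groupoid $\Hh$, and trivial isotropy at $x_0$: if no such $V$ existed, a shrinking sequence of neighbourhoods $V_n$ would yield $\gamma_n \in K \setminus \Hh^{(0)}$ with $r(\gamma_n), s(\gamma_n) \in V_n$, and any subsequential limit of $\gamma_n$ would be a non-identity element of $\Hh^{x_0}_{x_0}$, contradicting topological principality at $x_0$.

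Given such a $V$, I would choose a scalar $h \in C_c(V)$ with $0 \leq h \leq 1$ and $h(x_0) = 1$. The $C_0(\Hh^{(0)})$-bimodule action on $C^*_r(\Hh; \sB)$ then ensures that $h \cdot f \cdot h$ is supported on $r^{-1}(V) \cap s^{-1}(V) \cap K \subseteq \Hh^{(0)}$, so $h \cdot f \cdot h$ lies in $C^*(\Hh^{(0)}; \sB)$; evaluating at $x_0$ gives $\|h \cdot f \cdot h\| \geq \|P(f)(x_0)\| > 9\alpha/10$. On the other hand, $h \cdot a \cdot h \in I$ gives $q(h \cdot a \cdot h) = 0$, whence $\|q(h \cdot f \cdot h)\| = \|q(h \cdot f \cdot h - h \cdot a \cdot h)\| \leq \|h\|^2 \|f - a\| < \alpha/10$. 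Since $q$ is isometric on $C^*(\Hh^{(0)}; \sB)$, this forces $\|h \cdot f \cdot h\| < \alpha/10$, contradicting the lower bound above.
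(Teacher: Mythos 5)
Your proposal is correct and follows essentially the same route as the paper's proof (itself adapted from \cite{A-D} and \cite{BCFS}): a faithful conditional expectation $P$ onto $C^*(\Hh^{(0)};\sB)$, approximation of a positive element of $I$ by a compactly supported section, topological principality to produce an open $V\subseteq\Hh^{(0)}$ whose compression kills the off-diagonal part of that section, and then compression by functions supported in $V$. The only difference is the finish: the paper produces a nonzero positive element of $I\cap C^*(\Hh^{(0)};\sB)$ directly, via $faf\ge\frac14 f^2$ with $f=g\,h(P(b))$ and hereditariness of ideals, whereas you argue by contradiction using that the quotient map would be isometric on $C^*(\Hh^{(0)};\sB)$ if the intersection were trivial; both finishes are standard and interchangeable here.
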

\begin{proof}
Our argument follows that of \cite[Proposition~2.4]{A-D} (see also \cite[Lemma~4.2]{BCFS}
or \cite[Lemma~3.5]{KPR}).

Let $I$ be a nonzero ideal of $C^*_r( \Hh; \sB)$. Let $P : C^*_r(\Hh; \sB) \to
C^*(\Hh^{(0)}; \sB)$ be the faithful conditional expectation discussed above. Choose $a
\in I^+$ such that $\|P(a)\| = 1$. Choose $b \in \Gamma_c(\Hh; \sB) \cap C^*_r(\Hh;
\sB)^+$ such that $\|a - b\| < 1/4$, so that $\|P(b)\| > 3/4$. Then $b - P(b) \in
\Gamma_c(\Hh; \sB)$; thus, $K := \operatorname{supp}(b - P(b))$ is compact and contained
in $\Hh \setminus \Hh^{(0)}$. Let $U = \{u \in \Hh^{(0)} : \|P(b)(u)\| > 3/4\}$. By
\cite[Lemma~3.3]{BCFS}, there exists an open set $V$ such that $V \subseteq U$ and $VKV =
\emptyset$.

Fix a continuous function $h : \RR\to [0,1]$ such that $h$ is identically 0 on $(-\infty,
1/2]$ and identically 1 on $[3/4, \infty)$. Then
\[
h(P(b))\, P(b)\, h(P(b)) \ge \frac12 h(P(b))^2.
\]
Choose $g \in C_c(\Hh^{(0)}) \subseteq \Mm C^*_r(\Hh; \sB)$ such that $\|g\|_\infty = 1$
and $\operatorname{supp}(g) \subseteq V$. Let $f := g h(P(b))$. Since $\|h(P(b))(u)\| =
1$ for all $u \in U$, and since $V \subseteq U$, we have $f \not= 0$.

Since $\operatorname{supp}(f) \subseteq \operatorname{supp}(g) \subseteq V$, we have $f
(b - P(b)) f = 0$. Hence
\begin{align*}
fbf &= f\big(P(b) + (b - P(b))\big)f\\
	&= fP(b)f
	= g h(P(b))\, P(b)\, h(P(b)) g
	\ge  \frac12 g^2 h(P(b))^2
	= \frac12 f^2.
\end{align*}
Thus
\[
faf \ge fbf - \frac14f^2 = fP(b)f - \frac14f^2 \ge \frac14f^2.
\]
Since $faf \in I$ and $I$ is hereditary, we deduce that $\frac14f^2 \in I \cap
C^*(\Hh^{(0)}; \sB) \setminus \{0\}$.
\end{proof}

Lemma~\ref{lem:bundle uniqueness} allows us to characterise the simplicity of $C^*(\Hh;
\sB)$ when $\Hh$ is amenable and topologically principal.

\begin{cor}\label{cor:minimality}
Suppose that $\Hh$ is a topologically principal amenable groupoid and that $\sB$ is a Fell bundle over $\Hh$.
Then $C^*(\Hh; \sB)$ is simple if and only if the action of $\Hh$ on $\Prim(C^*(\Hh^{(0)}; \sB))$ described in \cite[Section~2]{IW} is minimal.
\end{cor}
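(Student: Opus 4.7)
My plan is to adapt Renault's classical argument to the Fell bundle setting, using Lemma~\ref{lem:bundle uniqueness} in place of the standard diagonal-uniqueness step. Amenability of $\Hh$ gives $C^*(\Hh; \sB) = C^*_r(\Hh; \sB)$ by \cite[Theorem~1]{SimWil:IJMxx}, so the faithful conditional expectation $P : C^*(\Hh; \sB) \to C^*(\Hh^{(0)}; \sB)$ discussed before Lemma~\ref{lem:bundle uniqueness} is at hand. The key translation I would use throughout is: an ideal $J$ of $C^*(\Hh^{(0)}; \sB)$ is $\Hh$-invariant in the sense that the corresponding open subset of $\Prim(C^*(\Hh^{(0)}; \sB))$ is invariant under the action of \cite{IW} if and only if $bJb^* \subseteq J$ for every $\gamma \in \Hh$ and $b \in B_\gamma$. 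This equivalence follows from the description of the action in \cite[Lemma~2.1]{IW} as $\mathfrak{p} \mapsto \clsp\{bab^* : b \in B_\gamma, a \in \mathfrak{p}\}$, with Lemma~\ref{lem:unitaries implement} making the intuition concrete when $B_\gamma$ contains a unitary.

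For the ``minimality implies simplicity'' direction, I would let $I$ be a nonzero ideal of $C^*(\Hh; \sB)$ and set $J := I \cap C^*(\Hh^{(0)}; \sB)$; topological principality together with Lemma~\ref{lem:bundle uniqueness} gives $J \ne 0$. To verify invariance, for $\gamma \in \Hh$, $b \in B_\gamma$ and $j \in J$, I choose a compactly supported section $f$ of $\sB$ supported on a bisection around $\gamma$ with $f(\gamma) = b$; then $f \in C^*(\Hh; \sB)$, so $fjf^* \in I$, and since a direct convolution calculation shows that $fjf^*$ is supported in $\Hh^{(0)}$, it lies in $I \cap C^*(\Hh^{(0)}; \sB) = J$. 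Its value at $r(\gamma)$ is exactly $bj(s(\gamma))b^*$, so we obtain $bJb^* \subseteq J$ fibrewise. By the above translation, minimality forces $J = C^*(\Hh^{(0)}; \sB)$. To conclude $I = C^*(\Hh; \sB)$, I would observe that for any $f \in \Gamma_c(\Hh; \sB)$ supported on a bisection, $f^*f$ is supported in $\Hh^{(0)}$ and hence lies in $C^*(\Hh^{(0)}; \sB) \subseteq I$; the standard $C^*$-algebra fact that $f^*f \in I$ implies $f \in I$ then gives $f \in I$, and since such sections densely generate $C^*(\Hh; \sB)$, this forces $I = C^*(\Hh; \sB)$.

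For the converse I argue the contrapositive. If the action is not minimal, I pick a proper nonzero $\Hh$-invariant ideal $J \subset C^*(\Hh^{(0)}; \sB)$ and let $\tilde{J}$ denote the ideal of $C^*(\Hh; \sB)$ it generates; clearly $\tilde{J} \ne 0$. To contradict simplicity it suffices to show $\tilde{J} \cap C^*(\Hh^{(0)}; \sB) \subseteq J$, which, since $P$ restricts to the identity on $C^*(\Hh^{(0)}; \sB)$, reduces to showing $P(\tilde{J}) \subseteq J$. On the dense subalgebra of compactly supported sections, elements of $\tilde{J}$ are finite sums $\sum a_i j_i b_i$ with $j_i \in J$ and $a_i, b_i$ supported on bisections; the pointwise values of the restriction of each $a_i j_i b_i$ to $\Hh^{(0)}$ are of the form appearing in the invariance condition for $J$, and so lie in $J$ by invariance. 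Norm-continuity of $P$ then extends this containment to all of $\tilde{J}$. The main technical obstacle in carrying out this plan is precisely the translation between $\Hh$-invariance of $J$ and the fibrewise stability condition $bJb^* \subseteq J$: since fibres of $\sB$ need not contain unitaries, one cannot simply quote Lemma~\ref{lem:unitaries implement} but must work carefully with the general description of the \cite{IW}-action by closed linear spans of products $bab^*$, and verify the equivalence at the level of fibres of the $C_0(\Hh^{(0)})$-algebra $C^*(\Hh^{(0)}; \sB)$.
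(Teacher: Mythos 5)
Your proposal is viable and, in the forward direction, structurally identical to the paper's: both deduce $C^*(\Hh;\sB)=C^*_r(\Hh;\sB)$ from amenability, use Lemma~\ref{lem:bundle uniqueness} to get a nonzero ideal $I_0 = I\cap C^*(\Hh^{(0)};\sB)$, and invoke minimality to force $I_0$ to be everything (the paper finishes with an approximate-identity argument where you use $f^*f\in I\Rightarrow f\in I$ for sections supported on bisections; both work). The genuine divergence is in the converse and in how invariance is handled. The paper does not re-prove that an invariant ideal of $C^*(\Hh^{(0)};\sB)$ generates a \emph{proper} ideal of $C^*(\Hh;\sB)$: it simply cites \cite[Theorem~3.7]{IW}, and for the forward direction it works directly with closed invariant subsets of $\Prim(C^*(\Hh^{(0)};\sB))$, getting invariance of the hull of $I_0$ from \cite[Lemma~2.1]{IW}. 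You instead re-derive the properness statement from scratch via the conditional expectation $P$, which is the classical Renault/Exel route; this buys self-containedness at the cost of having to establish the ``translation'' between invariance of the hull of $J$ under the \cite{IW} action and the algebraic condition $bJb^*\subseteq J$ for $b\in B_\gamma$ --- a step you correctly identify as the technical crux but do not actually carry out. Two points there deserve care: (i) since the fibres $B_\gamma$ need not contain unitaries, Lemma~\ref{lem:unitaries implement} is only heuristic, and the equivalence must be checked against the Rieffel-induction description $\mathfrak{p}\mapsto\clsp\{xay^*: x,y\in B_\gamma,\ a\in\mathfrak{p}\}$ at the level of fibres of the $C_0(\Hh^{(0)})$-algebra; (ii) in your computation of $P(a_ij_ib_i)$ the terms that arise have the form $bjb'$ with $b\in B_\gamma$ and $b'\in B_{\gamma^{-1}}$ rather than $bjb^*$, so a polarization argument is needed to reduce to the stated invariance condition. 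Neither issue is fatal --- both are standard --- but as written your converse direction replaces a one-line citation with an argument whose key lemma is asserted rather than proved.
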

\begin{proof}
We denote the action of $\Hh$ on $\Prim(C^*(\Hh^{(0)}; \sB))$ by $\vartheta$. First
suppose that $\vartheta$ is not minimal. Then there exists $\rho$ in
$\operatorname{Prim}(C^*(\Hh^{(0)}; \sB))$ such that $\Hh \cdot\rho$ is not dense in
$\operatorname{Prim}(C^*(\Hh^{(0)}; \sB))$. Hence $X := \overline{\Hh \cdot \rho}$ is a
nontrivial closed invariant subspace of $\operatorname{Prim}(C^*(\Hh^{(0)}; \sB))$. The
set $I_X$ of sections of $\left.\sB\right|_{\Hh^{(0)}}$ that vanish on $X$ is a
nontrivial $\Hh_\Lambda$-invariant ideal of the $C^*$-algebra $C^*(\Hh^{(0)}; \sB)$
sitting over the unit space of $\Hh$. Thus the ideal of $C^*(\Hh; \sB)$ generated by
$I_X$ is a proper nontrivial ideal \cite[Theorem~3.7]{IW} and so $C^*(\Hh; \sB)$ is not
simple.

Now suppose that $\vartheta$ is minimal. Let $I$ be a nonzero ideal of $C^*(\Hh; \sB)$.
Theorem~1 of \cite{SimWil:IJMxx} shows that $C^*(\Hh; \sB) = C^*_r(\Hh; \sB)$, so
Lemma~\ref{lem:bundle uniqueness} implies that $I_0 := I \cap C^*(\Hh^{(0)}; \sB)$ is
nonzero. Let $X$ denote the set of primitive ideals of $C^*(\Hh^{(0)}; \sB)$ that contain
$I_0$. Since $I$ is nonzero, $X$ is nonempty, and it is closed by definition of the
topology on $\operatorname{Prim}(C^*(\Hh^{(0)}; \sB))$. Lemma~2.1 of \cite{IW} implies
that $X$ is also $\vartheta$-invariant. Since $\vartheta$ is minimal, it follows that $X
= \operatorname{Prim}(C^*(\Hh^{(0)}; \sB))$, and so $I_0 = C^*(\Hh^{(0)}; \sB)$. Since
$C^*(\Hh^{(0)}; \sB)$ contains an approximate identity for $C^*_r(\Hh; \sB)$, it follows
that $I = C^*(\Hh; \sB)$ as required.
\end{proof}

Since we established in Corollary~\ref{cor:ILambda} that $\Hh_\Lambda$ is always
topologically principal and amenable, we obtain an immediate corollary.

\begin{cor}\label{cor:simple<->minimal}
Let $\Lambda$ be a row-finite cofinal $k$-graph with no sources, and suppose that $c \in
\Zcat2(\Lambda, \TT)$. Let $\theta$ be the action of $\Hh_\Lambda$ on $\Lambda^\infty
\times \Per(\Lambda)\widehat{\;}$ obtained from any choice of $\sigma$ in
Lemma~\ref{lem:tilde-r cocycle}. Then $C^*(\Lambda, c)$ is simple if and only if $\theta$
is minimal.
\end{cor}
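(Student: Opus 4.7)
The proof is essentially an assembly of the machinery built up in the preceding sections. My plan is to chain together Theorem~\ref{thm:bundleiso}, Theorem~\ref{thm:action} and Corollary~\ref{cor:minimality}, using Corollary~\ref{cor:ILambda} to verify the hypotheses of the last of these.

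First I would invoke Corollary~\ref{cor:ILambda} to observe that $\Hh_\Lambda$ is a topologically principal amenable \'etale Hausdorff groupoid. Then, by Theorem~\ref{thm:bundleiso}, there is an isomorphism $C^*(\Lambda, c) \cong C^*(\Hh_\Lambda; \sB_\Lambda)$, so that $C^*(\Lambda, c)$ is simple if and only if $C^*(\Hh_\Lambda; \sB_\Lambda)$ is simple. Since the hypotheses of Corollary~\ref{cor:minimality} are satisfied, the latter is simple if and only if the Ionescu--Williams action of $\Hh_\Lambda$ on $\Prim(C^*(\Hh_\Lambda^{(0)}; \sB_\Lambda))$ from \cite{IW} is minimal.

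Next I would appeal to Theorem~\ref{thm:action}, which supplies a homeomorphism
\[
i : \Prim(C^*(\Hh_\Lambda^{(0)}; \sB_\Lambda)) \to \Lambda^\infty \times \widehat{Z}_\omega
\]
that intertwines the Ionescu--Williams action with the action $\theta$ of Lemma~\ref{lem:tilde-r cocycle}. Since minimality is preserved under conjugation by a homeomorphism, minimality of the Ionescu--Williams action is equivalent to minimality of $\theta$. Chaining these equivalences gives the corollary.

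The only mild subtlety, rather than a genuine obstacle, is that the action $\theta$ constructed in Lemma~\ref{lem:tilde-r cocycle} depends on the choice of cocycle $\sigma$ representing the class of $\sigma_c$ with the normalisation $\sigma|_{\Ii_\Lambda} = \omega \times 1_{\Lambda^\infty}$, and on the choice of bicharacter $\omega$; I would remark that any two such choices yield cohomologous twisted groupoid algebras and hence isomorphic bundles $\sB_\Lambda$, so the corresponding actions $\theta$ are conjugate under a homeomorphism of $\Lambda^\infty \times \widehat{Z}_\omega$, and the minimality statement is independent of these choices. With this noted, the proof is complete.
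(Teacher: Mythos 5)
Your proposal is correct and follows exactly the paper's own argument: chain Theorem~\ref{thm:bundleiso}, Corollary~\ref{cor:ILambda} together with Corollary~\ref{cor:minimality}, and Theorem~\ref{thm:action}. The extra remark on independence of the choice of $\sigma$ and $\omega$ is a reasonable addition but not needed, since the statement already quantifies over ``any choice of $\sigma$'' and each such choice is covered by the same chain of equivalences.
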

\begin{proof}
Theorem~\ref{thm:bundleiso} shows that $C^*(\Lambda,c)$ is simple if and only if
$C^*(\Hh_\Lambda; \sB_\Lambda)$ is simple. Corollary~\ref{cor:ILambda} shows that
$\Hh_\Lambda$ is amenable and topologically principal, and so
Corollary~\ref{cor:minimality} implies that $C^*(\Hh_\Lambda; \sB_\Lambda)$ is simple if
and only if the action of $\Hh_\Lambda$ on $\Prim(C^*(\Hh_\Lambda^{(0)}; \sB_\Lambda))$
is minimal. Theorem~\ref{thm:action} shows that this action is minimal if and only if
$\theta$ is minimal, giving the result.
\end{proof}

We can now prove our main theorem.

\begin{proof}[Proof of Theorem~\ref{thm:main simple}]
By Corollary~\ref{cor:simple<->minimal}, it is enough to show that the set
\begin{equation}\label{eq:mustbedense}
\{(r(\gamma), r^\sigma_\gamma|_{Z_\omega}) : \gamma \in (\Gg_\Lambda)_x\}
\end{equation}
is dense in $\Lambda^\infty \times \widehat{Z}_\omega$ for every $x \in \Lambda^\infty$
if and only if the action $\theta$ of $\Hh_\Lambda$ on $\Lambda^\infty \times
\Per(\Lambda)\widehat{\;}$ is minimal. Clearly~\eqref{eq:mustbedense} is dense for every
$x$ if and only if $\{(r(\gamma), r^\sigma_\gamma|_{Z_\omega} \cdot \chi) : \gamma \in
(\Gg_\Lambda)_x\}$ is dense in $\Lambda^\infty \times Z_\omega$ for every $x$ and every
$\chi \in \widehat{Z}_\omega$, which is precisely minimality of $\theta$.
\end{proof}

\section{Twists induced by torus-valued 1-cocycles, and crossed-products by quasifree actions}\label{sec:CP}

In this section we describe a class of examples of twisted $(k+l)$-graph $C^*$-algebras
arising from $\TT^l$-valued $1$-cocycles on aperiodic $k$-graphs. We describe the
simplicity criterion obtained from our main theorem for these examples. We then show that
these twisted $C^*$-algebras can also be interpreted as twisted crossed-products of
$k$-graph algebras by quasifree actions, giving a characterisation of simplicity for the
latter. See also \cite[Theorem~2.1]{BDKP} for the case where $l = 1$.

Recall that we write $T_l$ for $\NN^l$ when regarded as an $l$-graph with degree map the
identity functor.

Given a cocycle $\omega \in Z^2(\ZZ^l, \TT)$ and an action $\alpha$ of $\ZZ^l$ on a
$C^*$-algebra $A$, we write $A \times_{\alpha,\omega} \ZZ^l$ for the twisted crossed
product, which is the universal $C^*$-algebra generated by unitary multipliers $\{u_n \in
\Mm(A \times_{\alpha,\omega} \ZZ^l) : n \in \ZZ^l\}$ and a homomorphism $\pi : A \to A
\times_{\alpha,\omega} \ZZ^l$ such that $u_m \pi(a) u_m^* = \pi(\alpha_m(a))$ for all
$m,a$ and such that $u_m u_n = \omega(m,n)u_{m+n}$ for all $m,n$. For further details on
twisted crossed products, see \cite{PackerRaeburn}.

Let $\Lambda$ be a row-finite $k$-graph with no sources. Consider a $1$-cocycle $\phi \in
\Zcat1(\Lambda, \TT^l)$. Define a $2$-cocycle $c_\phi \in \Zcat2(\Lambda \times T_l,
\TT)$ by
\begin{equation}\label{eq:phic}
c_\phi((\lambda, m), (\mu,n)) = \phi(\mu)^m \quad \text{ for composable $\lambda,\mu \in \Lambda$ and $m,n \in \NN^l = T_l$.}
\end{equation}
There is a continuous cocycle $\tilde{\phi} \in Z^1(\Gg_\Lambda, \TT^l)$ such that
\[
\tilde{\phi}(\mu x, d(\mu) - d(\nu), \nu x) = \phi(\mu)\overline{\phi(\nu)}\quad\text{ for all $x \in \Lambda^\infty$ and $\mu,\nu \in \Lambda r(x)$.}
\]
Let $\omega$ be a bicharacter of $\ZZ^l$. There is a cocycle $c_{\phi, \omega}\in \Zcat2
(\Lambda \times T_l, \TT)$ such that
\begin{equation}\label{eq:phicomega}
c_{\phi, \omega}((\lambda, m), (\mu,n)) = \phi(\mu)^m \omega(m,n).
\end{equation}

The next theorem is the main result of this section. It characterises simplicity of
$C^*(\Lambda \times T_l, c_{\phi,\omega})$ in terms of $\tilde{\phi}$ and the centre
$Z_\omega$ of $\omega$ described in~\eqref{eq:Zomega}, under the simplifying assumption
that $\Lambda$ is aperiodic.

\begin{thm}\label{thm:easytwistsimplicity}
Let $\Lambda$ be a row-finite $k$-graph with no sources. Take $\phi \in \Zcat1(\Lambda,
\TT^l)$, and let $\omega$ be a bicharacter of $\ZZ^l$. Let $\tilde{\phi} \in
Z^1(\Gg_\Lambda, \TT)$ and $c_{\phi,\omega} \in \Zcat2(\Lambda \times T_l, \TT)$ be as
above. Then
\begin{enumerate}
\item \label{one} there is an action $\beta$ of $\ZZ^l$ on $C^*(\Lambda)$ such that
    $\beta_m(s_\lambda) = \phi(\lambda )^m s_\lambda$ for all $\lambda \in \Lambda$
    and $m \in \ZZ^l$;
\item \label{two} there is an isomorphism $\rho : C^* (\Lambda \times T_l,
    c_{\phi,\omega}) \cong C^*(\Lambda) \times_{\beta, \omega} \ZZ^l$ such that
\[
\rho ( s_{(\lambda,m)}) = \pi(s_\lambda) u_m \text{ for all $\lambda \in \Lambda$ and $m \in T_l$; and }
\]
\item \label{three} if $\Lambda$ is aperiodic, then $C^*(\Lambda \times T_l,
    c_{\phi,\omega})$ is simple if and only if the orbit $\{(r(\gamma),
    \tilde{\phi}(\gamma)|_{Z_\omega}) : \gamma \in (\Gg_\Lambda)_x \}$ is dense in
    $\Lambda^\infty \times \widehat{Z}_\omega$ for all $x \in \Lambda^\infty$.
\end{enumerate}
\end{thm}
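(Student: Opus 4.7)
For part~\ref{one}, the plan is to exploit that $\phi \in \Zcat1(\Lambda, \TT^l)$ being a 1-cocycle gives $\phi(\mu\nu) = \phi(\mu)\phi(\nu)$ and $\phi|_{\Lambda^0} \equiv 1$. For each $m \in \ZZ^l$, the elements $t^m_\lambda := \phi(\lambda)^m s_\lambda$ satisfy the Cuntz--Krieger relations (CK1)--(CK4); the only nontrivial check is (CK2), which follows from the 1-cocycle identity on $\phi$ combined with $|\phi(\lambda)| = 1$. The universal property of $C^*(\Lambda)$ yields endomorphisms $\beta_m$, and $\beta_m \beta_n = \beta_{m+n}$ is immediate on generators, so each $\beta_m$ is an automorphism.

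For part~\ref{two}, the plan is to define $T_{(\lambda,m)} := \pi(s_\lambda) u_m$ in $C^*(\Lambda) \times_{\beta,\omega} \ZZ^l$ and verify that this is a Cuntz--Krieger $(\Lambda \times T_l, c_{\phi,\omega})$-family. The crucial product computation is
\[
T_{(\lambda,m)} T_{(\mu,n)} = \pi(s_\lambda) u_m \pi(s_\mu) u_n = \pi(s_\lambda) \pi(\beta_m(s_\mu)) u_m u_n = \phi(\mu)^m \omega(m,n)\, T_{(\lambda\mu,\, m+n)},
\]
matching formula~\eqref{eq:phicomega}. The universal property of $C^*(\Lambda \times T_l, c_{\phi,\omega})$ then yields $\rho$. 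I would establish injectivity via the gauge-invariant uniqueness theorem \cite[Corollary~7.7]{KPS4}: since the gauge action of $\TT^k$ on $C^*(\Lambda)$ commutes with $\beta$, it extends to the crossed product, and combining with the dual $\TT^l$-action produces a $\TT^{k+l}$-action intertwined by $\rho$ with the gauge action on $C^*(\Lambda \times T_l, c_{\phi,\omega})$. Surjectivity follows because the image contains each $\pi(s_\lambda) = \rho(s_{(\lambda,0)})$ and because the strict limits of $\sum_{v \in F} \rho(s_{(v,m)})$ recover the multipliers $u_m$, so $\rho$ is onto a dense subset of the spanning elements $\pi(a) u_m$.

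For part~\ref{three}, the plan is to apply Theorem~\ref{thm:main simple} to the $(k+l)$-graph $\Lambda \times T_l$ equipped with $c_{\phi,\omega}$. First I would identify $(\Lambda \times T_l)^\infty$ with $\Lambda^\infty$ (since $T_l$ has a unique infinite path) and identify $\Gg_{\Lambda \times T_l}$ with $\Gg_\Lambda \times \ZZ^l$ via $((x,\infty),(n,n'),(y,\infty')) \mapsto ((x,n,y), n')$. Aperiodicity of $\Lambda$ combined with $\Per(T_l) = \ZZ^l$ yields $\Per(\Lambda \times T_l) = \{0\} \times \ZZ^l \cong \ZZ^l$, so $\Ii_{\Lambda \times T_l} \cong \Lambda^\infty \times \ZZ^l$. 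Next, applying Lemma~\ref{lem:compute omega} with generators $g_i = e_{k+i}$, any vertex $v$ (the shift condition is automatic because each $g_i$ lies in the $T_l$-direction) and a path $\lambda \in v(\Lambda \times T_l)^{(0,1_l)}$, a direct computation using~\eqref{eq:phicomega} and the bicharacter identities for $\omega$ shows that the bicharacter on $\ZZ^l$ produced by the lemma is cohomologous to $\omega$; in particular it has centre $Z_\omega$. Finally, choosing $\Pp$ compatibly and using~\eqref{eq:sigmac} to build a representative $\sigma$, I would verify that, under the groupoid isomorphism above, the cocycle $r^\sigma$ of Lemma~\ref{lem:r-alpha} restricted to $Z_\omega$ agrees with $\tilde{\phi}|_{Z_\omega}$ on the $\Gg_\Lambda$-factor and is trivial on the $\ZZ^l$-factor.

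The main obstacle will be this last identification: tracking $r^\sigma$ through the isomorphism $\Gg_{\Lambda \times T_l} \cong \Gg_\Lambda \times \ZZ^l$ and confirming $r^\sigma_\gamma|_{Z_\omega} = \tilde{\phi}(\gamma_\Lambda)|_{Z_\omega}$ for $\gamma \leftrightarrow (\gamma_\Lambda, n')$. This reduces to the observation that~\eqref{eq:phicomega} cleanly separates the $\phi$-contribution (which governs the conjugation of isotropy elements by non-isotropy elements of $\Gg_{\Lambda \times T_l}$) from the $\omega$-contribution (which affects only $\sigma|_{\Ii}$). Once these pieces are assembled, the density condition of Theorem~\ref{thm:main simple} applied to $\Lambda \times T_l$ specialises precisely to the density of $\{(r(\gamma), \tilde{\phi}(\gamma)|_{Z_\omega}) : \gamma \in (\Gg_\Lambda)_x\}$ in $\Lambda^\infty \times \widehat{Z}_\omega$ asserted in~\ref{three}, completing the proof.
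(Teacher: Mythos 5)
Your proposal follows essentially the same route as the paper for all three parts: the Cuntz--Krieger verifications, universal property, gauge-invariant uniqueness and surjectivity argument for (\ref{one})~and~(\ref{two}) are exactly the paper's, and for (\ref{three}) the paper likewise identifies $\Gg_{\Lambda\times T_l}\cong\Gg_\Lambda\times\ZZ^l$, shows $\Per(\Lambda\times T_l)=\{0\}\times\ZZ^l$, builds $\sigma\in Z^2(\Gg_{\Lambda\times T_l},\TT)$ with $\sigma|_{\Ii}=1_{\Lambda^\infty}\times\omega$, computes $r^\sigma$ explicitly and then invokes Theorem~\ref{thm:main simple} (this is the content of Proposition~\ref{prp:action formula} together with Lemmas \ref{lem:technical}~and~\ref{lem:partition}). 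Your use of Lemma~\ref{lem:compute omega} to pin down the centre of the relevant bicharacter is a mild variant of the paper's Lemma~\ref{lem:technical}, which instead shows directly that the groupoid cocycle $\sigma_\omega$ on $\Gg_{T_l}\cong\ZZ^l$ built from the partition $\{(m^+,m^-):m\in\ZZ^l\}$ is cohomologous to $\omega$; either works.

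Two points need attention before the plan closes up. First, Theorem~\ref{thm:main simple} assumes cofinality, which is not a hypothesis of part~(\ref{three}); you must first observe (as in Remark~\ref{rmk:observant reader}) that \emph{each} side of the claimed equivalence already forces $\Lambda$ to be cofinal --- simplicity does so by \cite[Lemma~7.2]{SWW}, and the density condition forces every $\Gg_\Lambda$-orbit in $\Lambda^\infty$ to be dense --- so that the non-cofinal case is trivial and cofinality may be assumed before applying the theorem. Second, the explicit computation (carried out in the paper via~\eqref{eq:cocycle formula}) yields $r^\sigma_{(g,n)}(p)=\overline{\tilde{\phi}(g)}^{\,p}$ for $p\in Z_\omega$, i.e.\ the \emph{conjugate} of $\tilde{\phi}(g)|_{Z_\omega}$ rather than $\tilde{\phi}(g)|_{Z_\omega}$ itself, and the action is $\theta_g(s(g),\chi)=(r(g),\overline{\tilde{\phi}(g)|_{Z_\omega}}\chi)$. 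This is harmless for the final statement, since $(y,\chi)\mapsto(y,\overline{\chi})$ is a homeomorphism of $\Lambda^\infty\times\widehat{Z}_\omega$ interchanging the two orbit sets and minimality of $\theta$ reduces to density of the orbits of the points $(x,1)$; but the identification you flag as the main obstacle should be stated with the conjugate, and you still need the one-line translation argument $\Gg_\Lambda\cdot(x,z)=\{(y,zw):(y,w)\in\Gg_\Lambda\cdot(x,1)\}$ to pass from density of these particular orbits to minimality.
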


\begin{rmk}\label{rmk:observant reader}
The observant reader may be surprised to note that there is no cofinality hypothesis on
the preceding theorem. But a moment's reflection shows that it is still there, just
hidden: each of the conditions that $C^*(\Lambda \times T_l, c_{\phi, \omega})$ is
simple, and that each $\{(r(\gamma), \tilde{\phi}(\gamma)|_{Z_\omega}) : \gamma \in
(\Gg_\Lambda)_x\}$ is dense in $\Lambda^\infty \times \widehat{Z}_\omega$ implies that
$\Lambda$ is cofinal.
\end{rmk}

We prove (\ref{one})~and~(\ref{two}) here and defer the proof of~(\ref{three}) to the end
of the section.

\begin{proof}[Proof of Theorem~\ref{thm:easytwistsimplicity}(\ref{one})~and~(\ref{two})]
For part~(\ref{one}) observe that for each $m \in \ZZ^l$ the set $\{ \phi(\lambda )^m
s_\lambda : \lambda \in \Lambda \}$ is a Cuntz--Krieger $\Lambda$-family, so induces a
homomorphism $\beta_m : C^*(\Lambda) \to C^*(\Lambda)$ such that $\beta_m(s_\lambda) =
\phi(\lambda )^m s_\lambda$. Clearly $\beta_0 = \id$ and $\beta_m \circ \beta_n =
\beta_{m+n}$, so $\beta$ is an action.

For part~(\ref{two}) we first check that $\{\pi(s_\lambda)u_m : (\lambda,m) \in \Lambda
\times T_l\}$ is a Cuntz--Krieger $( \Lambda \times T_l , c_{\phi, \omega})$-family. For
$\lambda \in \Lambda$ and $m \in T_l$ let $t_{(\mu,m)} = \pi(s_\lambda) u_m$. It is easy
to check that~(CK1), (CK3)~and~(CK4) hold because each $\beta_m$ fixes each $s_\lambda
s_\lambda^*$. To check~(CK2) we compute
\begin{align*}
t_{(\lambda, m)} t_{(\mu, n)}
    &= \pi(s_\lambda) u_m  \pi(s_\mu)u_n
    = \pi(s_\lambda) u_m \pi(s_\mu) u^*_m u_m u_n \\
    &= \pi(s_\lambda) \pi(\beta_m(s_\mu)) \omega(m,n) u_{m+n}
    = \phi(\mu)^m\omega(m,n) \pi(s_{\lambda \mu}) u_{m+n} \\
    &= \phi(\mu)^m\omega(m,n) t_{(\lambda\mu, m+n)}
    = c_{\phi,\omega}((\lambda, m) (\mu, n)) t_{(\lambda,m)(\mu,n)}.
\end{align*}

Now the universal property of $C^* ( \Lambda \times T_l , c_{\phi,\omega} )$ gives a
homomorphism $\rho$ satisfying the desired formula. The gauge-invariant uniqueness
theorem \cite[Corollary~7.7]{KPS4} shows that $\rho$ is injective. The map $\rho$ is
surjective because its image contains all the generators of the twisted crossed product.
\end{proof}

In order to prove~(\ref{three}) we first do some preparatory work to identify the action
$\theta$ of Lemma~\ref{lem:tilde-r cocycle} in terms of the cocycle $\tilde{\phi} \in
Z^2(\Gg_\Lambda , \TT)$. Before that, though, a comment on the hypotheses of
Theorem~\ref{thm:easytwistsimplicity} is in order.

\begin{rmk}
Note that the aperiodicity hypothesis in
Theorem~\ref{thm:easytwistsimplicity} is needed in our proof and simplifies the
statement, but is not a necessary condition for $C^*(\Lambda \times T_l,
c_{\phi,\omega})$ to be simple.  To see this, consider $\Lambda = T_1$, which is $\NN$
regarded as a $1$-graph. Put $l = 1$, define $\phi : \Lambda \to \TT$ by $\phi(1) =
e^{2\pi i\theta}$ and take $\omega$ to be trivial. Then $\Lambda$ is cofinal, but
certainly not aperiodic. We have $\Lambda \times T_1 \cong T_2$, and $c_{\phi,\omega} \in
\Zcat2(T_2, \TT)$ is given by $c_{\phi,\omega}(m,n) := e^{2\pi i \theta m_2 n_1}$, so
$C^*(\Lambda \times T_1, c_{\phi,\omega})$ is the rotation algebra $A_\theta$, which is
simple whenever $\theta$ is irrational.
\end{rmk}

Let $\Gamma := \Lambda \times T_l$.  The proof of Theorem~\ref{thm:easytwistsimplicity}
(3) requires quite a bit of preliminary work. To begin preparations, observe that the
projection map $\pi : \Gamma \to \Lambda$ given by $(\lambda, m) \mapsto \lambda$ induces
a homeomorphism
\[
\pi_\infty : \Gamma^\infty \to \Lambda^\infty
    \quad\text{such that $\pi_\infty(x)(m,n) = \pi(x((m,0), (n,0)))$.}
\]
There is an isomorphism $\Gg_\Gamma \cong \Gg_\Lambda \times \ZZ^l$ given by
\begin{equation}\label{eq:GGamma cong GLambdaxZl}
\begin{split}
\big((\alpha, m) x,& (d(\alpha),m)-(d(\beta),n), (\beta, n)x\big)\\
    &\mapsto \big((\alpha \pi_\infty(x), d(\alpha) - d(\beta), \beta\pi_\infty(x)), m-n\big).
\end{split}
\end{equation}

Suppose that $\Lambda$ is cofinal and aperiodic, and fix a subset $\Pp$ of $\Lambda
\mathbin{{_s*_s}} \Lambda$ such that $(\mu,s(\mu)) \in \Pp$ for all $\mu \in \Lambda$ and
such that $\{Z(\mu,\nu) : (\mu,\nu) \in \Pp\}$ is a partition of $\Gg_\Lambda$. For $g
\in \Gg_\Lambda$, let $(\mu_g, \nu_g) \in \Pp$ be the unique pair such that $g \in
Z(\mu_g, \nu_g)$. Observe that $\Per(\Gamma) = \{0\} \times \ZZ^l \subseteq \ZZ^{k+l}$.
Recall that $\Hh_\Gamma$ is the quotient $\Gg_\Gamma/\Ii_\Gamma$ of the $(k+l)$-graph
groupoid by the interior of its isotropy.

\begin{prop}\label{prp:action formula}
Let $\Lambda$ be a row-finite cofinal aperiodic $k$-graph with no sources, and let
$\Gamma := \Lambda \times T_l$. Then $\Per(\Gamma) = \{0\} \times \ZZ^l \subseteq
\ZZ^{k+l}$, the identification of $\Gg_\Gamma$ with $\Gg_\Lambda \times \ZZ^l$ described
above carries $\Ii_\Gamma$ to $\Gg_\Lambda^{(0)} \times \ZZ^l$, and $(g,m)\cdot\Ii_\Gamma
\mapsto g$ gives an isomorphism $\Hh_\Gamma \cong \Gg_\Lambda$. Let $\phi : \Lambda \to
\TT^l$ be a $1$-cocycle, and let $\omega$ be a bicharacter of $\ZZ^l$. Let
$c_{\phi,\omega} \in \Zcat2(\Gamma,\TT)$ be the 2-cocycle of~\eqref{eq:phicomega}. There
exists a cocycle $\sigma \in Z^2(\Gg_\Gamma, \TT)$ such that $\sigma|_{\Ii_\Gamma} =
1_{\Lambda^\infty} \times \omega$, $C^*(\Gg_\Gamma, \sigma) \cong C^*(\Gamma, c_{\phi,
\omega})$ and the action $\theta$ of $\Gg_\Lambda$ on $\Lambda^\infty \times
\widehat{Z}_\omega$ of Lemma~\ref{lem:tilde-r cocycle} satisfies
\begin{equation}\label{eq:theta formula}
    \theta_g(s(g), \chi) = (r(g), \overline{\tilde{\phi}(g)|_{Z_\omega}}\chi).
\end{equation}
\end{prop}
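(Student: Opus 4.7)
The plan is to break the proof into two movements: first the purely groupoid-theoretic claims about $\Per(\Gamma)$, $\Ii_\Gamma$, and $\Hh_\Gamma$, then the construction of $\sigma$ and the formula for $\theta$. For the first movement, I note that $T_l^\infty$ consists of a single path, so in $\Gamma = \Lambda \times T_l$ the relation $\sim$ of Section~\ref{sec:kgrgpds} satisfies $(\mu,m) \sim (\nu,n)$ iff $s(\mu)=s(\nu)$ and $\mu y = \nu y$ for all $y \in s(\mu)\Lambda^\infty$. Aperiodicity of $\Lambda$ forces $\mu = \nu$, and conversely $(\mu,m)\sim(\mu,n)$ always; so $\Per(\Gamma) = \{0\}\times\ZZ^l$. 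Corollary~\ref{cor:ILambda} then identifies $\Ii_\Gamma$ with $\Gg_\Gamma^{(0)} \times \Per(\Gamma)$, and under~\eqref{eq:GGamma cong GLambdaxZl} this becomes $\Gg_\Lambda^{(0)} \times \ZZ^l$. Since $(g,m)\cdot((s(g),0,s(g)),n) = (g,m+n)$ in the product groupoid, the $\Ii_\Gamma$-orbit of $(g,m)$ is $\{g\}\times\ZZ^l$, yielding $\Hh_\Gamma \cong \Gg_\Lambda$ via $(g,m)\cdot\Ii_\Gamma \mapsto g$.

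For the second movement, I would fix a partition $\Pp$ of $\Gg_\Lambda$ satisfying~\eqref{eq:Pproperties} and construct a compatible partition
\[
\Pp_\Gamma := \big\{\,((\mu,j^+),(\nu,j^-)) : (\mu,\nu)\in\Pp,\ j \in \ZZ^l\,\big\}
\]
of $\Gg_\Gamma$. The key point of this choice is that for any $\alpha \in \Gg_\Gamma$ corresponding to $(g,0) \in \Gg_\Lambda \times \ZZ^l$ under~\eqref{eq:GGamma cong GLambdaxZl}, the selected pair is $(\mu_\alpha,\nu_\alpha) = ((\mu_g,0),(\nu_g,0))$; and for $p = (0,j) \in \Per(\Gamma)$ the selected pair for $(y,p,y)$ is $((r(y)_\Lambda,j^+),(r(y)_\Lambda,j^-))$. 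Taking $\sigma := \sigma_{c_{\phi,\omega}}$ defined by~\eqref{eq:sigmac} and plugging these into the formula, the $\phi$-factors $\phi(\mu)^m$ either cancel (because $\mu_g = \nu_g = r(y)$ on $\Ii_\Gamma$) or have zero exponent (because the $\ZZ^l$-component of each $\mu$ is zero). What remains is precisely $\omega(j,j')$, giving $\sigma|_{\Ii_\Gamma} = 1_{\Lambda^\infty}\times\omega$. The existing Corollary~7.9 of~\cite{KPS4} then gives $C^*(\Gg_\Gamma,\sigma) \cong C^*(\Gamma,c_{\phi,\omega})$.

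For the formula~\eqref{eq:theta formula}, I would evaluate $r^\sigma_\alpha(p)$ from~\eqref{eq:conjugation} at $\alpha$ corresponding to $(g,0)$ and $p = (0,j)$. With the partition above, the three $\sigma$-factors expand via~\eqref{eq:sigmac} into a product of $\phi$- and $\omega$-terms; the $\omega$-terms cancel in the combination $\sigma(\alpha,\iota)\sigma(\alpha\iota,\alpha^{-1})\overline{\sigma(\alpha,\alpha^{-1})}$ because they appear only when both arguments carry nonzero $\ZZ^l$-coordinates, whereas $\alpha$ and $\alpha^{-1}$ carry zero. The surviving $\phi$-contribution telescopes to $\phi(\nu_g)^j\overline{\phi(\mu_g)^j} = \overline{\tilde{\phi}(g)^j}$. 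Hence $\tilde{r}^\sigma_{[\alpha]}(0,j) = \overline{\tilde{\phi}(g)^j}$, and Lemma~\ref{lem:tilde-r cocycle} delivers~\eqref{eq:theta formula} after identifying $\widehat{Z}_\omega \subseteq \widehat{\Per(\Gamma)} = \TT^l$.

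The principal obstacle will be the bookkeeping in Paragraph~2, namely arranging $\Pp_\Gamma$ carefully so that the raw cocycle~\eqref{eq:sigmac} already restricts to $1_{\Lambda^\infty} \times \omega$ on $\Ii_\Gamma$ (rather than merely being cohomologous to such a cocycle via Proposition~\ref{prp:constant in Ii}), and the parallel bookkeeping in Paragraph~3 to confirm that the cancellations really occur as advertised when one expands the three $\sigma$-factors in~\eqref{eq:conjugation}. If the direct approach proves too fiddly, I would fall back on using Proposition~\ref{prp:constant in Ii} to replace $\sigma_{c_{\phi,\omega}}$ by a cohomologous cocycle $\sigma$ with the required restriction, using Lemma~\ref{lem:r-alpha} to see that $r^\sigma_\alpha$ is unchanged on $\Per(\Gamma)$ under such cohomologies (since cohomology of $\sigma$ by a $1$-cochain $b$ alters $r^\sigma_\alpha(p)$ by $b(\alpha\iota_p\alpha^{-1})\overline{b(\alpha)}b(\alpha)\overline{b(\iota_p)}$, and suitable choice of $b$ keeps this trivial on $\Per(\Gamma)$).
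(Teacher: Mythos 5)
Your first movement and your final computation track the paper's argument closely, and the partition $\Pp_\Gamma$ you construct is exactly the one the paper obtains in Lemma~\ref{lem:partition}. The genuine problem is the central claim of your second paragraph: it is \emph{not} true that the raw cocycle $\sigma_{c_{\phi,\omega}}$ of~\eqref{eq:sigmac} restricts on $\Ii_\Gamma$ to $1_{\Lambda^\infty}\times\omega$, no matter how carefully $\Pp_\Gamma$ is arranged. The $\phi$-factors do cancel as you say, but what survives is not $\omega(j,j')$; it is
\[
\sigma_\omega(j,j') = \omega(j^+,a)\overline{\omega(j^-,a)}\,\omega((j')^+,b)\overline{\omega((j')^-,b)}\,\overline{\omega((j+j')^+,c)}\,\omega((j+j')^-,c)
\]
for the auxiliary elements $a,b,c\in\NN^l$ demanded by~\eqref{eq:alphabetagamma}, i.e.\ the $2$-cocycle on $\ZZ^l$ obtained by feeding $\omega|_{T_l\times T_l}$ through the $k$-graph-to-groupoid machine. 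This differs from $\omega$ in general: for example $\sigma_\omega(e_i,-e_i)=1$ always (take $a=b=0$, $c=e_i$), whereas $\omega(e_i,-e_i)=\overline{\omega(e_i,e_i)}$ need not be $1$ since $\omega$ is an arbitrary bicharacter. The discrepancy is intrinsic to formula~\eqref{eq:sigmac}, and this is precisely why the paper proves Lemma~\ref{lem:technical}: $\sigma_\omega$ agrees with $\omega$ on pairs of generators, hence $\sigma_\omega\sigma_\omega^*=\omega\omega^*$, hence the two are cohomologous---but only cohomologous.

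Your fallback is the correct route and is essentially what the paper does: it factors $\sigma_{c_{\phi,\omega}}=\sigma_{c_\phi}\sigma_{\tilde\omega}$, observes that the identification $\Gg_\Gamma\cong\Gg_\Lambda\times\ZZ^l$ carries $\sigma_{\tilde\omega}$ to $1_{Z^2(\Gg_\Lambda,\TT)}\times\sigma_\omega$, and uses Lemma~\ref{lem:technical} to replace this by $1\times\omega$, arriving at the cohomologous cocycle $\sigma((g,m),(h,n))=\sigma_{c_\phi}((g,m),(h,n))\,\omega(m,n)$, whose restriction to $\Ii_\Gamma$ is $1_{\Lambda^\infty}\times\omega$ by inspection. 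To make your fallback airtight you need the one point you only gesture at: the coboundary $b$ implementing the passage from $\sigma_\omega$ to $\omega$ lives on $\ZZ^l$ and so may be taken constant in the $\Lambda^\infty$-coordinate, whence the correction factor $b\big((s(\alpha),p,s(\alpha))\big)\overline{b\big((r(\alpha),p,r(\alpha))\big)}$ that it contributes to~\eqref{eq:conjugation} equals $1$ and $r^\sigma|_{\Per(\Gamma)}$ is unchanged. (If you instead invoke Proposition~\ref{prp:constant in Ii} as a black box, you must still check that the cochains $b_x$ it produces are independent of $x$; this holds here because $\sigma_{c_{\phi,\omega}}|_{(\Ii_\Gamma)_x}=\sigma_\omega$ is the same for every $x$ with your choice of partition.) With that supplied, your third paragraph goes through and reproduces the paper's computation $r^\sigma_{(g,n)}(p)=\overline{\tilde{\phi}(g)}^{\,p}(\omega\omega^*)(n,p)$, which reduces to $\overline{\tilde{\phi}(g)}^{\,p}$ for $p\in Z_\omega$ and yields~\eqref{eq:theta formula}.
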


Before proving Proposition~\ref{prp:action formula}, we establish two technical lemmas
that will help in the proof. The first of these shows that the passage from a $k$-graph
cocycle to a groupoid cocycle described in~\eqref{eq:sigmac} has no effect on cohomology
when the $k$-graph in question is $\NN^l$ and its groupoid is $\ZZ^l$.

Recall that for $m \in \ZZ^l$, we write $m^+ := m \vee 0$ and $m^- := (-m) \vee 0$; so $m
= m^+ - m^-$ and $m^+ \wedge m^- = 0$.

\begin{lem}\label{lem:technical}
Let $\omega$ be a bicharacter of $\ZZ^l$. Then $\omega|_{T_l \times T_l}$ belongs to
$\Zcat2(T_l, \TT)$. The $l$-graph $T_l$ has a unique infinite path $x$, and there is an
isomorphism $\Gg_{T_l} \cong \ZZ^l$ given by $(x,m,x) \mapsto m$. Let $\Pp = \{(m^+, m^-)
: m \in \ZZ^l\}$. Then $(\lambda, s(\lambda)) \in \Pp$ for all $\lambda \in T_l$, and
$\Gg_{T_l} = \bigsqcup_{(\mu,\nu)\in \Pp} Z(\mu,\nu)$. Let $\sigma_\omega \in
Z^2(\Gg_{T_l}, \TT)$ be the 2-cocycle obtained from $\omega$ and $\Pp$ as
in~\eqref{eq:sigmac}. Then $\sigma_\omega$ is cohomologous to $\omega$ when regarded as a
cocycle on $\ZZ^l$.
\end{lem}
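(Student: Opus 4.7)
The final statement is the substance of the lemma. My plan is to reduce it to \cite[Proposition~3.2]{OPT}, which identifies two $2$-cocycles on an abelian group as cohomologous if and only if they induce the same antisymmetric bicharacter. Thus it will suffice to show
\[
\sigma_\omega(g,h)\overline{\sigma_\omega(h,g)} = \omega(g,h)\overline{\omega(h,g)} \quad \text{for all } g,h \in \ZZ^l,
\]
which avoids having to construct an explicit $1$-coboundary. The preliminary items in the lemma (that $\omega|_{T_l \times T_l}$ is a categorical $2$-cocycle, that $T_l$ has a unique infinite path, that $\Gg_{T_l} \cong \ZZ^l$, that every $m \in \ZZ^l$ factors uniquely as $m = m^+ - m^-$ with $m^+ \wedge m^- = 0$ so $\Pp$ partitions $\Gg_{T_l}$, and that $(\lambda, s(\lambda)) = (\lambda^+, \lambda^-) \in \Pp$ for all $\lambda \in T_l$) are all direct from the definitions and the minimality of the pair $(m^+, m^-)$ among representations $m = a - b$ with $a, b \in \NN^l$.

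To produce a concrete formula for $\sigma_\omega(g,h)$ I will invoke~\eqref{eq:sigmac}, and the real work is to pick $\alpha, \beta, \gamma \in T_l = \NN^l$ satisfying the three compatibility relations $g^- + \alpha = h^+ + \beta$, $g^+ + \alpha = (g+h)^+ + \gamma$, and $h^- + \beta = (g+h)^- + \gamma$. The symmetric choice $\alpha := h^+$ and $\beta := g^-$ solves the first by commutativity of $\NN^l$ and forces $\gamma := g^+ + h^+ - (g+h)^+$, which does lie in $\NN^l$; the identity $g + h = (g+h)^+ - (g+h)^- = (g^+ - g^-) + (h^+ - h^-)$ then guarantees the third equation automatically. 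After collapsing the six bicharacter factors in~\eqref{eq:sigmac} via the identity $\omega(a^+, c)\overline{\omega(a^-, c)} = \omega(a, c)$, I expect to land on
\[
\sigma_\omega(g,h) = \omega(g, h^+)\,\omega(h, g^-)\,\overline{\omega(g+h, \gamma)}.
\]

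Since $\gamma$ is symmetric in $g, h$, the third factor will cancel when I form $\sigma_\omega(g,h)\overline{\sigma_\omega(h,g)}$, leaving
\[
\omega(g, h^+ - h^-)\,\omega(h, g^- - g^+) = \omega(g,h)\,\overline{\omega(h,g)},
\]
which is exactly $(\omega\omega^*)(g,h)$; this completes the verification. The one obstacle worth flagging is the bookkeeping in~\eqref{eq:sigmac}: it is easy to swap $\mu_{gh}$ with $\nu_{gh}$, or to choose an $\alpha$ that solves only one of the three defining equations. The symmetric choice above is the key to keeping the calculation tractable and exposing the cancellation that identifies $\sigma_\omega\sigma_\omega^*$ with $\omega\omega^*$.
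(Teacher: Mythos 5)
Your proposal is correct and takes essentially the same route as the paper's proof: both reduce the cohomology claim to \cite[Proposition~3.2]{OPT} by showing $\sigma_\omega\sigma_\omega^* = \omega\omega^*$, and both compute $\sigma_\omega$ from~\eqref{eq:sigmac} using an explicit choice of $\alpha,\beta,\gamma$ (your choice $\alpha = h^+$, $\beta = g^-$, $\gamma = g^+ + h^+ - (g+h)^+$ does satisfy all three compatibility relations, and the cancellation you describe goes through). The only cosmetic difference is that the paper evaluates $\sigma_\omega$ just on the generators $e_i, e_j$ (taking $\alpha = e_j$, $\beta = \gamma = 0$, which gives $\sigma_\omega(e_i,e_j) = \omega(e_i,e_j)$ outright) and then invokes the fact that antisymmetrised cocycles are bicharacters determined by their values on generators, whereas you carry out the general computation directly.
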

\begin{proof}
Every bicharacter of $\ZZ^l$ is a 2-cocycle, so $\omega$ restricts to a cocycle on $T_l$.
It is clear that $\Gg_{T_l} \cong \ZZ^l$ as claimed. We identify $\Gg_{T_l}$ with $\ZZ^l$
for the rest of the proof.

For $m \in \Gg_{T_l}$, the pair $(\mu_m, \nu_m) = (m^+, m^-)$ is the unique element of
$\Pp$ such that $m \in Z(\mu_m, \nu_m)$. So for $m,n \in \Gg_{T_l}$,
equation~\eqref{eq:sigmac} gives
\[
\sigma_\omega(m,n)
    = \omega(m^+, \alpha)\overline{\omega(m^-, \alpha)}
        \omega(n^+, \beta)\overline{\omega(n^-, \beta)}
            \overline{\omega((m+n)^+, \gamma)}\omega((m+n)^-, \gamma)
\]
for any choice of $\alpha, \beta, \gamma \in T_l$ such that $m^+ + \alpha = (m+n)^+ +
\gamma$, $n^- + \beta = (m+n)^- + \gamma$ and $m^- + \alpha = n^+ + \beta$.

We show that $\sigma_{\omega}(e_i, e_j) = \omega(e_i, e_j)$ for all $i,j \le l$. For this
observe that for $m = e_i$ and $n = e_j$, the elements $\alpha = e_j$, $\beta = 0$ and
$\gamma = 0$ satisfy the above conditions, so
\[
\sigma_\omega(e_i,e_j)
    = \omega(e_i, e_j)\overline{\omega(0, e_j)}
        \omega(e_j, 0)\overline{\omega(0, 0)}
        \overline{\omega(e_i+e_j, 0)}\omega(0, 0)\\
    = \omega(e_i, e_j)
\]
as claimed.

Hence $(\sigma_\omega \sigma^*_\omega)(e_i, e_j) = (\omega\omega^*)(e_i, e_j)$ for all
$i,j$. These are bicharacters by \cite[Proposition~3.2]{OPT} and so we have
$\sigma_\omega \sigma^*_\omega = \omega\omega^*$. Thus \cite[Proposition~3.2]{OPT}
implies that $\sigma_\omega$ is cohomologous to $\omega$ as claimed.
\end{proof}

Our second technical result shows how to obtain a partition $\Qq$
satisfying~\eqref{eq:Pproperties} for the $(k+l)$-graph $\Lambda \times T_l$ from a
partition $\Pp$ satisfying~\eqref{eq:Pproperties} for $\Lambda$.

\begin{lem}\label{lem:partition}
Let $\Lambda$ be a row-finite $k$-graph with no sources. Suppose that $\Pp \subseteq
\Lambda \mathbin{{_s*_s}} \Lambda$ satisfies $(\mu, s(\mu)) \in \Pp$ for all $\mu$, and
$\Gg_\Lambda = \bigsqcup_{(\mu,\nu) \in \Pp} Z(\mu,\nu)$. Let $\Qq := \{((\mu,m^+),
(\nu,m^-)) : (\mu,\nu) \in \Pp, m \in \ZZ^l\}$. Then $(\alpha, s(\alpha)) \in \Qq$ for
all $\alpha \in \Gamma$, and $\Gg_\Gamma = \bigsqcup_{(\alpha,\beta) \in \Qq}
Z(\alpha,\beta)$.
\end{lem}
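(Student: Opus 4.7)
The plan is to reduce everything to the identification $\Gg_\Gamma \cong \Gg_\Lambda \times \ZZ^l$ in~\eqref{eq:GGamma cong GLambdaxZl} and use the hypothesised partition of $\Gg_\Lambda$.

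First I would dispose of the basepoint condition. Given $\alpha = (\lambda, n) \in \Gamma$, the source $s(\alpha) = (s(\lambda), 0)$ since $T_l$ has a single object. By hypothesis $(\lambda, s(\lambda)) \in \Pp$, and setting $m := n \in \NN^l \subseteq \ZZ^l$ gives $m^+ = n$ and $m^- = 0$; hence $(\alpha, s(\alpha)) = ((\lambda, m^+), (s(\lambda), m^-))$ lies in $\Qq$.

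The main step is to show that under~\eqref{eq:GGamma cong GLambdaxZl}, the cylinder set $Z((\mu, m^+), (\nu, m^-))$ corresponds precisely to $Z(\mu, \nu) \times \{m\}$. Unwinding the definitions, a typical element of $Z((\mu,m^+),(\nu,m^-))$ has the form $((\mu, m^+)y, (d(\mu), m^+) - (d(\nu), m^-), (\nu, m^-)y)$ for $y \in \Gamma^\infty$ with $r(y) = (s(\mu), 0)$, and the isomorphism~\eqref{eq:GGamma cong GLambdaxZl} sends this to $((\mu \pi_\infty(y), d(\mu) - d(\nu), \nu\pi_\infty(y)), m^+ - m^-) = ((\mu\pi_\infty(y), d(\mu)-d(\nu), \nu\pi_\infty(y)), m)$. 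Since $\pi_\infty$ is a bijection between $\{y \in \Gamma^\infty : r(y) = (s(\mu),0)\}$ and $\{z \in \Lambda^\infty : r(z) = s(\mu)\}$, as $y$ ranges over its domain the image ranges over all of $Z(\mu,\nu) \times \{m\}$.

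To conclude, I would observe that the assignment $(\mu,\nu,m) \mapsto ((\mu,m^+),(\nu,m^-))$ is a bijection from $\Pp \times \ZZ^l$ onto $\Qq$ (the pair $(m^+, m^-)$ recovers $m = m^+ - m^-$). Combining this with the hypothesised decomposition of $\Gg_\Lambda$ yields
\[
\Gg_\Gamma \cong \Gg_\Lambda \times \ZZ^l
= \bigsqcup_{(\mu,\nu) \in \Pp} \bigsqcup_{m \in \ZZ^l} Z(\mu,\nu) \times \{m\}
= \bigsqcup_{(\alpha,\beta)\in\Qq} Z(\alpha,\beta),
\]
as required. There is no substantive obstacle here: this is a bookkeeping argument, and the only thing that genuinely needs care is the identification of the cylinder $Z((\mu, m^+), (\nu, m^-))$ with $Z(\mu,\nu) \times \{m\}$, which is why I would single that computation out rather than subsume it into the disjointness argument.
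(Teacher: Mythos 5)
Your proof is correct and follows essentially the same route as the paper's: both reduce the claim to the identification $\Gg_\Gamma \cong \Gg_\Lambda \times \ZZ^l$ of~\eqref{eq:GGamma cong GLambdaxZl} and transport the partition $\{Z(\mu,\nu) \times \{m\}\}$ back to $\Gg_\Gamma$. The only difference is cosmetic: you spell out the verification that $Z((\mu,m^+),(\nu,m^-))$ corresponds to $Z(\mu,\nu)\times\{m\}$, which the paper leaves implicit in its chain of equalities.
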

\begin{proof}
Consider an element $\alpha = (\mu,m) \in \Gamma$. We have $m^+ = m$ and $m^- = 0$, and
$(\alpha, s(\alpha)) = ((\mu,m), (s(\mu), 0))$. Since $(\mu, s(\mu)) \in \Pp$, we have
\[
((\mu,m), (s(\mu), 0)) = ((\mu, m^+), (s(\mu), m^-)) \in \Qq.
\]

Let $\rho : \Gg_\Gamma \to \Gg_\Lambda \times \ZZ^l$ denote the map~\eqref{eq:GGamma cong
GLambdaxZl}. Then
\begin{align*}
\Gg_\Gamma
    &= \rho^{-1}(\Gg_\Lambda \times \ZZ^l)
    = \bigsqcup_{(\mu,\nu) \in \Pp} \rho^{-1}(Z(\mu,\nu) \times \ZZ^l) \\
    &= \bigsqcup_{(\mu,\nu) \in \Pp} \bigsqcup_{m \in \ZZ^l}
        \rho^{-1}(Z(\mu,\nu) \times \{m\}) \\
    &= \bigsqcup_{(\mu,\nu) \in \Pp} \bigsqcup_{m \in \ZZ^l}
        Z((\mu, m^+),(\nu, m^-))
    = \bigsqcup_{(\alpha,\beta) \in \Qq} Z(\alpha,\beta).\qedhere
\end{align*}
\end{proof}

\begin{proof}[Proof of Proposition~\ref{prp:action formula}]
Since $\pi_\infty : \Gamma^\infty \to \Lambda^\infty$ intertwines the shift maps, we have
$\shift^{(p,m)}(x) = \shift^{(q,n)}(x)$ if and only if $\shift^p(\pi_\infty(x)) =
\shift^q(\pi_\infty(x))$. Hence $\Per(\Gamma) = \Per(\Lambda) \times \ZZ^l = \{0\} \times
\ZZ^l$ because $\Lambda$ is aperiodic. The next two assertions are straightforward to
check using the definition of the isomorphism $\Gg_\Gamma \cong \Gg_\Lambda \times
\ZZ^l$.

We identify $\Gg_\Gamma$ with $\Gg_\Lambda \times \ZZ^l$ for the remainder of this proof.
Choose $\Pp \subseteq \Lambda \mathbin{{_s*_s}} \Lambda$ such that $(\lambda,s(\lambda))
\in \Pp$ for all $\lambda$ and $\Gg_\Lambda = \bigsqcup_{(\mu,\nu) \in \Pp} Z(\mu,\nu)$.
For $g \in \Gg_\Lambda$, write $(\mu_g, \nu_g)$ for the element of $\Pp$ with $g \in
Z(\mu_g, \nu_g)$. Let $\Qq = \{(\mu, m^+), (\nu, m^-) : (\mu,\nu) \in \Pp, m \in \ZZ^l\}$
as in Lemma~\ref{lem:partition}. For $(g,m) \in \Gg_\Gamma$, let $\tilde{\mu}_{(g,m)} :=
(\mu_g,m^+)$, and $\tilde{\nu}_{(g,m)}) = (\nu_g, m^-)$. Then $(\tilde{\mu}_{(g,m)},
\tilde{\nu}_{(g,m)})$ is the unique element of $\Qq$ such that $(g,m) \in
Z(\tilde{\mu}_{(g,m)}, \tilde{\nu}_{(g,m)})$. For $((x,0,x), m) \in \Ii_\Gamma$, we have
$\tilde{\mu}_{((x,0,x),m)} = (r(x), m^+)$ and $\tilde{\nu}_{((x,0,x),m)} = (r(x), m^-)$.

Let $\tilde\omega$ denote the $2$-cocycle on $\Gamma$ given by $((\mu,m), (\nu,n))
\mapsto \omega(m,n)$, and recall that $c_\phi \in \Zcat2(\Lambda \times T_l, \TT)$ is
given by~\eqref{eq:phic}. Let $\sigma_{c_\phi}$ be the 2-cocycle on $\Gg_\Gamma$ obtained
from~\eqref{eq:sigmac} applied to $c_\phi \in \Zcat2(\Gamma, \TT)$ and $\Qq$, and let
$\sigma_{\tilde\omega}$ be the 2-cocycle on $\Gg_\Gamma$ obtained in the same way from
the cocycle $((\mu,m), (\nu,n)) \mapsto \omega(m,n)$ on $\Gamma$. Using that
$c_{\phi,\omega} = c_\phi \tilde\omega$ and the definitions~\eqref{eq:sigmac} of
$\sigma_{c_\phi}$, $\sigma_{\tilde\omega}$ and $\sigma_{c_{\phi,\omega}}$, it is easy to
see that $\sigma_{c_{\phi,\omega}} = \sigma_{c_\phi}\sigma_{\tilde\omega}$. Let
$\sigma_\omega \in Z^2(\Gg_{T_l}, \TT)$ be the cocycle obtained from $\omega$ as in
Lemma~\ref{lem:technical}. The formulas for $\sigma_{\tilde\omega}$ and $\sigma_\omega$
show that the identification $\Gg_\Gamma \cong \Gg_\Lambda \times \ZZ^l \cong \Gg_\Lambda
\times \Gg_{T_l}$ carries $\sigma_{\tilde\omega}$ to $1_{Z^2(\Gg_\Lambda, \TT)} \times
\sigma_\omega$. Thus Lemma~\ref{lem:technical} shows that $\sigma_{c_{\phi,\omega}}$ is
cohomologous to the cocycle $\sigma \in Z^2(\Gg_\Gamma, \TT)$ given by
\[
\sigma((g,m),(h,n)) = \sigma_{c_\phi}((g,m),(h,n)) \omega(m,n).
\]
Corollary~7.9 of \cite{KPS4} shows that $C^*(\Gamma, c_{\phi,\omega}) \cong
C^*(\Gg_\Gamma, \sigma_{c_{\phi,\omega}})$. Proposition~II.1.2 of \cite{Ren} says that
cohomologous groupoid cocycles determine isomorphic twisted groupoid $C^*$-algebras, and
so we have $C^*(\Gamma, c_{\phi,\omega}) \cong C^*(\Gg_\Gamma, \sigma)$. We have
$\sigma|_{\Ii_\Gamma} = 1_{\Lambda^\infty} \times \omega$ by construction, so it remains
to calculate the action $\tilde{r}^\sigma$ of $\Gg_\Lambda$ on $\widehat{Z}_\omega$
described in Lemma~\ref{lem:tilde-r cocycle}.

For this, we first claim that $\sigma_{c_\phi}$ satisfies
\begin{equation}\label{eq:cocycle formula}
\sigma_{c_\phi}((g, m), (h,n))
    = \big(\overline{\phi(\mu_g)}\phi(\mu_{gh})\big)^m\big(\overline{\phi(\nu_h)}\phi(\nu_{gh})\big)^n.
\end{equation}
To see this, choose $\alpha, \beta, \gamma \in \Lambda$ and $y \in \Lambda^\infty$
satisfying~\eqref{eq:alphabetagamma}. Then there exist $m_\alpha, m_\beta$ and $m_\gamma
\in \NN^l$ such that $\tilde\alpha = (\alpha, m_\alpha)$, $\tilde\beta = (\beta,
m_\beta)$ and $\tilde\gamma = (\gamma, m_\gamma)$ satisfy equations
\eqref{eq:alphabetagamma} with respect to the $\tilde\mu$'s and $\tilde\nu$'s. So
\begin{align}
\sigma_{c_\phi}((g, m), (h,n))
    &= c_\phi(\tilde\mu_g, \tilde\alpha)\overline{c_\phi}(\tilde\nu_g,\tilde\alpha)
         c_\phi(\tilde\mu_h, \tilde\beta)\overline{c_\phi}(\tilde\nu_h,\tilde\beta)
         \overline{c_\phi}(\tilde\mu_{gh}, \tilde\gamma)c_\phi(\tilde\nu_{gh},\tilde\gamma)\nonumber\\
    &= \phi(\alpha)^{m^+} \overline{\phi(\alpha)}^{m^-} \phi(\beta)^{n^+} \overline{\phi(\beta)}^{n^-}
        \overline{\phi(\gamma)}^{(m+n)^+}\phi(\gamma)^{(m+n)^-} \nonumber\\
    &= \big(\phi(\alpha)\overline{\phi(\gamma)}\big)^m \big(\phi(\beta)\overline{\phi(\gamma)}\big)^n.
        \label{eq:cocycle calculation}
\end{align}
We have $\phi(\mu_g)\phi(\alpha) = \phi(\mu_g\alpha) = \phi(\mu_{gh}\gamma) =
\phi(\mu_{gh})\phi(\gamma)$, and rearranging gives $\phi(\alpha)\overline{\phi(\gamma)} =
\overline{\phi(\mu_g)}\phi(\mu_{gh})$, and similarly, $\phi(\beta)\overline{\phi(\gamma)}
= \overline{\phi(\nu_h)}\phi(\nu_{gh})$. Substituting this into~\eqref{eq:cocycle
calculation}, we obtain~\eqref{eq:cocycle formula}.

Now, fix $p \in Z_\omega$ and $(g,n) = ((x,m,y),n) \in \Gg_\Gamma$.
Using~\eqref{eq:cocycle formula} at the second equality, we calculate:
\begin{align*}
r&^\sigma_{(g,n)}(p)\\
    &=\sigma\big((g,n), ((y,0,y),p)\big)\sigma\big((g, n + p),(g^{-1}, -n)\big)\overline{\sigma\big((g,n), (g^{-1},-n)\big)}\\
    &=\Big[\big(\overline{\phi(\mu_g)}\phi(\mu_{g})\big)^n\big(\overline{\phi(\nu_{s(g)})}\phi(\nu_{g})\big)^p\Big]
        \Big[\big(\overline{\phi(\mu_g)}\phi(\mu_{gg^{-1}})\big)^{n+p}\big(\overline{\phi(\nu_{g^{-1}})}\phi(\nu_{gg^{-1}})\big)^{-n}\Big]\\
    &\qquad\qquad \Big[\overline{\big(\overline{\phi(\mu_g)}\phi(\mu_{gg^{-1}})\big)^n
        \big(\overline{\phi(\nu_{g^{-1}})}\phi(\nu_{gg^{-1}})\big)^{-n}}\hskip1pt\Big] \omega(n,p)\omega(n+p,-n)\overline{\omega(n,-n)} \\
    &= \phi(\nu_g)^p \overline{\phi(\mu_g)}^{n+p}\phi(\nu_{g^{-1}})^n \phi(\mu_g)^n \phi(\nu_{g^{-1}})^{-n}
        \omega(n,p)\overline{\omega(p,n)} \\
    &= \big(\overline{\phi(\mu_g)}\phi(\nu_g)\big)^p (\omega\omega^*)(n,p).
\end{align*}
Since $p \in Z_\omega$, we have $(\omega\omega^*)(n,p) = 1$, and so $r^\sigma_{(g,n)}(p)
= \overline{\tilde{\phi}(g)}^p$. Since $(g,n) \mapsto g$ induces an isomorphism
$\Hh_\Gamma \cong \Gg_\Lambda$, we deduce that $\tilde{r}^\sigma_g(p) =
\overline{\tilde{\phi}(g)}^p$ for $g \in \Gg_\Lambda$ and $p \in Z_\omega$.
So~\eqref{eq:theta formula} follows from the definition of $\theta$.
\end{proof}

\begin{proof}[Proof of part (\ref{three}) of Theorem~\ref{thm:easytwistsimplicity}]
As observed in Remark~\ref{rmk:observant reader}, both conditions appearing in
statement~(\ref{three}) imply that $\Lambda$ is cofinal, so we may assume that this is
the case. The cocycle $\sigma$ of Proposition~\ref{prp:action formula} satisfies the
hypotheses of Theorem~\ref{thm:main simple}. So Theorem~\ref{thm:main simple} combined
with Proposition~\ref{prp:action formula} says that $C^*(\Gamma, c_{\phi,\omega})$ is
simple if and only if the action $\theta$ of $\Gg_\Lambda$ on $\Lambda^\infty \times
Z_\omega$ described in~\eqref{eq:theta formula} is minimal. The condition described in
the statement of Theorem~\ref{thm:easytwistsimplicity}(\ref{three}) is precisely the
statement that $\Gg_\Lambda \cdot (x,1)$, the orbit of $(x,1)$ under $\theta$, is dense
for all $x$. Since $\Gg_\Lambda \cdot (x,z) = \{ (y,zw) : (y,w) \in \Gg_\Lambda \cdot
(x,1) \}$, the result follows.
\end{proof}

As a special case of Theorem~\ref{thm:easytwistsimplicity}, we obtain the following.

\begin{cor}\label{cor:1graphspecial}
Let $E$ be a strongly connected graph which is not a simple cycle. Let $\phi$ be a
function from $E^1$ to $\TT$. There is an action $\beta : \ZZ \to \Aut(C^*(E))$ such that
$\beta_n(s_e) = \phi(e)^ns_e$ for all $e \in E^1$ and $n \in \ZZ$. If there is a cycle
$\mu = \mu_1 \cdots \mu_n$ such that $\prod_i \phi(\mu_i) = e^{2\pi i \theta}$ for some
$\theta \not\in\QQ$, then $C^*(E) \times_\beta \ZZ$ is simple.
\end{cor}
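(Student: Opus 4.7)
The plan is to apply Theorem~\ref{thm:easytwistsimplicity} with the $1$-graph $\Lambda := E^*$ (the path category of $E$), $l = 1$, and $\omega$ trivial. Then $c_{\phi,\omega} = c_\phi$, $Z_\omega = \ZZ$, and $\widehat{Z}_\omega \cong \TT$. Part~(\ref{one}) of that theorem directly produces the asserted action $\beta$, and part~(\ref{two}) yields an isomorphism $C^*(E) \times_\beta \ZZ \cong C^*(E^* \times T_1, c_\phi)$. The task then reduces to verifying the hypotheses of part~(\ref{three}) and checking its density condition.

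Strong connectivity of $E$ immediately gives cofinality of $E^*$: given $x \in E^\infty$ and $v \in E^0$, any path from $v$ to $r(x)$ exhibits a nonempty $v E^* x(0)$. For aperiodicity, since $E$ is strongly connected but not a single cycle, some vertex $w$ admits two distinct outgoing edges $e_1, e_2$; strong connectivity then produces paths $\alpha_i$ from $r(e_i)$ back to $w$, so $\eta_1 := e_1 \alpha_1$ and $\eta_2 := e_2 \alpha_2$ are two distinct return paths at $w$. Concatenating them in an aperiodic binary pattern such as $\eta_1 \eta_2 \eta_1^2 \eta_2 \eta_1^3 \eta_2 \cdots$ yields an aperiodic infinite path at $w$; prepending a path from any $v$ to $w$ gives aperiodic paths based at every vertex, so $E^*$ is aperiodic in the sense of Definition 4.3 of \cite{KP2000}.

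With the hypotheses verified, simplicity of $C^*(E^* \times T_1, c_\phi)$ reduces by Theorem~\ref{thm:easytwistsimplicity}(\ref{three}) to showing that for every $x \in E^\infty$, the set $\{(r(\gamma), \tilde\phi(\gamma)) : \gamma \in (\Gg_{E^*})_x\}$ is dense in $E^\infty \times \TT$. Fix $x \in E^\infty$, $y \in E^\infty$, $z \in \TT$, $n \in \NN$, and $\varepsilon > 0$, and let $v = r(\mu) = s(\mu)$. Choose any $q \in \NN$, and use strong connectivity to pick a path $\alpha$ from $y(n)$ to $v$ and a path $\beta$ from $v$ to $x(q)$. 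For each $k \in \NN$, set
\[
\eta_k := y(0,n)\,\alpha\,\mu^k\,\beta
\qquad\text{and}\qquad
\gamma_k := \big(\eta_k \cdot \shift^q x,\ d(\eta_k) - q,\ x\big) \in (\Gg_{E^*})_x.
\]
Then $r(\gamma_k) \in Z(y(0,n))$ because $\eta_k$ begins with $y(0,n)$, and the defining formula for $\tilde\phi$ (with $\mu = \eta_k$, $\nu = x(0,q)$) gives
\[
\tilde\phi(\gamma_k) = \phi(y(0,n))\,\phi(\alpha)\,e^{2\pi i k\theta}\,\phi(\beta)\,\overline{\phi(x(0,q))}.
\]
Since $\theta \notin \QQ$, the set $\{e^{2\pi i k\theta} : k \in \NN\}$ is dense in $\TT$, so we may choose $k$ with $|\tilde\phi(\gamma_k) - z| < \varepsilon$, establishing density.

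The main obstacle is mostly bookkeeping: one must confirm that $\eta_k$ is a legitimate composable path, that $\gamma_k$ genuinely lies in $(\Gg_{E^*})_x$ (which amounts to checking $\shift^{d(\eta_k)}(\eta_k \cdot \shift^q x) = \shift^q x$), and that the quoted formula for $\tilde\phi$ on $\gamma_k$ is correct. Once these routine verifications are in place, irrationality of $\theta$ immediately closes out the density argument and hence simplicity via Theorem~\ref{thm:easytwistsimplicity}(\ref{three}).
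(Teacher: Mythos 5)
Your proposal is correct and follows essentially the same route as the paper: apply Theorem~\ref{thm:easytwistsimplicity}(\ref{three}) to $E^*$ with trivial $\omega$ (so $\widehat{Z}_\omega = \TT$), and obtain density by splicing arbitrarily many copies of the cycle $\mu$ between a path landing in the target cylinder set and a path returning to (a shift of) $x$, using irrationality of $\theta$ to make the phases $e^{2\pi i k\theta}\tilde\phi(\gamma_0)$ dense in $\TT$. The only differences are cosmetic: you spell out the aperiodicity check and route $\beta$ to $x(q)$ rather than to $r(x)$ as the paper does, neither of which changes the argument.
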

\begin{proof}
We will apply Theorem~\ref{thm:easytwistsimplicity} to the $1$-graph $E^*$ and the
extension of $\phi$ to a $1$-cocycle $\phi : E^* \to \TT$. Fix $x \in E^\infty$. By
Theorem~\ref{thm:easytwistsimplicity}, it suffices to show that
\[
\overline{\{(r(\gamma), \tilde{\phi}(\gamma)) : \gamma \in \Gg_E, s(\gamma) = x\}}
    = E^\infty \times \TT.
\]
Choose a basic open set $Z(\eta)$ of $E^\infty$ and $z \in \TT$. It suffices to show that
there are elements $\gamma_n$ with $s(\gamma_n) = x$ and $r(\gamma_n) \in Z(\eta)$ such
that $\tilde{\phi}(\gamma_n) \to z$.

Since $E$ is strongly connected, there exist $\alpha \in s(\eta)E^* r(\mu)$ and $\beta
\in r(\mu) E^* r(x)$. The elements $\gamma_n := (\eta\alpha\mu^n\beta x,
d(\eta\alpha\mu^n\beta), x) \in \Gg_E$ satisfy $s(\gamma_n) = x$, $r(\gamma_n) \in
Z(\eta)$, and $\tilde{\phi}(\gamma_n) := e^{2\pi i n \theta} \tilde{\phi}(\gamma_0)$.
Since $\theta$ is irrational, these values are dense in $\TT$.
\end{proof}

We conclude the section by giving a version of Theorem~\ref{thm:easytwistsimplicity}
whose statement does not require groupoid technology. Let $\Lambda$ be a row-finite
$k$-graph with no sources, and take $\phi \in \Zcat1(\Lambda, \TT^l)$ and $\omega$ a
bicharacter of $\ZZ^l$. Regard each $\phi(\lambda)$ as a character of $\ZZ^l$ so that
$\phi(\lambda)|_{Z_\omega}$ is a character of $Z_\omega$. Then $\Lambda$ acts on
$\Lambda^\infty \times \widehat{Z}_\omega$ by partial homeomorphisms $\{\vartheta_\lambda
: \lambda \in \Lambda\}$ where each $\dom(\vartheta_\lambda) = Z(s(\lambda)) \times
\widehat{Z}_\omega$, and
\[
\vartheta_\lambda(x,\chi) = (\lambda x, \phi(\lambda)|_{Z_\omega} \chi)
    \text{ for } (x,\chi) \in Z(s(\lambda)) \times \widehat{Z}_\omega.
\]

Define a relation $\sim$ on $\Lambda^\infty \times \widehat{Z}_\omega$ by
\begin{align*}
(x,\chi) \sim (x',\chi') &\text{ iff there exist }
(y,\rho) \in \Lambda^\infty \times \widehat{Z}_\omega
    \text{ and }\lambda, \mu \in \Lambda\text{ such that} \\
 &s(\lambda) = s(\mu) = r(y), \vartheta_\lambda(y,\rho) = (x,\chi)
 \text{ and } \vartheta_\mu(y, \rho) = (x',\chi').
\end{align*}
Then $\sim$ is an equivalence relation; indeed, it is the equivalence relation induced by
$\vartheta$. We write $[x,\chi]$ for the equivalence class of $(x,\chi)$ under $\sim$.

\begin{cor}\label{cor:justonemore}
Let $\Lambda$ be an aperiodic row-finite $k$-graph with no sources and take $\phi \in
\Zcat1 (\Lambda, \TT^l)$. Let $c_{\phi, \omega} \in \Zcat2  (\Lambda \times T_l, \TT)$ be
as defined in \eqref{eq:phicomega}. Then $C^*(\Lambda \times T_l, \phi_{c, \omega})$ is
simple if and only if $[x,1]$ is dense in $\Lambda^\infty \times \widehat{Z}_\omega$ for
all $x \in \Lambda^\infty$.
\end{cor}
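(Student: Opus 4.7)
The plan is to apply Theorem~\ref{thm:easytwistsimplicity}(\ref{three}) directly, after verifying that for each $x \in \Lambda^\infty$ the equivalence class $[x,1]$ coincides with the orbit
\[
O_x := \{(r(\gamma), \tilde{\phi}(\gamma)|_{Z_\omega}) : \gamma \in (\Gg_\Lambda)_x\}
\]
appearing in the statement of that theorem. Since $\Lambda$ is aperiodic, Theorem~\ref{thm:easytwistsimplicity}(\ref{three}) then gives the result immediately: $C^*(\Lambda \times T_l, c_{\phi,\omega})$ is simple if and only if $O_x$ is dense in $\Lambda^\infty \times \widehat{Z}_\omega$ for every $x$, which, once the identification is established, is exactly the stated density of $[x,1]$.

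To prove $[x,1] \subseteq O_x$, I would fix $(x',\chi') \in [x,1]$ and unpack the defining property of $\sim$: there exist $y \in \Lambda^\infty$, $\rho \in \widehat{Z}_\omega$ and $\lambda, \mu \in \Lambda$ with $s(\lambda) = s(\mu) = r(y)$, $\vartheta_\lambda(y,\rho) = (x,1)$ and $\vartheta_\mu(y,\rho) = (x',\chi')$. Applying the formula for $\vartheta_\lambda$ to the first equation forces $\lambda y = x$ and $\phi(\lambda)|_{Z_\omega}\rho = 1$, so $\rho = \overline{\phi(\lambda)|_{Z_\omega}}$; the second equation then yields $x' = \mu y$ and $\chi' = \bigl(\phi(\mu)\overline{\phi(\lambda)}\bigr)|_{Z_\omega}$. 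Setting $\gamma := (\mu y, d(\mu) - d(\lambda), \lambda y)$, we have $\gamma \in (\Gg_\Lambda)_x$ with $r(\gamma) = x'$ and, by the very definition of $\tilde\phi$, $\tilde{\phi}(\gamma) = \phi(\mu)\overline{\phi(\lambda)}$; hence $(x',\chi') \in O_x$.

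For the reverse inclusion $O_x \subseteq [x,1]$, I would fix $\gamma \in (\Gg_\Lambda)_x$ and use that every element of $\Gg_\Lambda$ admits a factorisation $\gamma = (\mu y, d(\mu) - d(\lambda), \lambda y)$ with $\lambda y = x$ and $s(\lambda) = s(\mu) = r(y)$. Putting $\rho := \overline{\phi(\lambda)|_{Z_\omega}}$ yields $\vartheta_\lambda(y,\rho) = (x,1)$ and $\vartheta_\mu(y,\rho) = \bigl(r(\gamma), \tilde{\phi}(\gamma)|_{Z_\omega}\bigr)$, witnessing that this point lies in $[x,1]$. This establishes $O_x = [x,1]$, and the corollary follows from Theorem~\ref{thm:easytwistsimplicity}(\ref{three}). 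There is no real obstacle here beyond carefully matching the two parametrisations; the aperiodicity hypothesis serves only to ensure that Theorem~\ref{thm:easytwistsimplicity}(\ref{three}) applies.
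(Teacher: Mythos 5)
Your proposal is correct and follows essentially the same route as the paper: the paper's own proof consists precisely of the assertion that it is routine to check that $[x,1]$ equals the orbit $\{(r(\gamma), \tilde{\phi}(\gamma)|_{Z_\omega}) : \gamma \in (\Gg_\Lambda)_x\}$ and then invokes Theorem~\ref{thm:easytwistsimplicity}(\ref{three}). You have simply carried out that routine identification explicitly (and correctly, including the observation that aperiodicity is needed only so that Theorem~\ref{thm:easytwistsimplicity}(\ref{three}) applies).
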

\begin{proof}
It is routine to check that $[x,1]$ is equal to the set $\{(r(\lambda) ,
\tilde{\phi}(\gamma)|_{Z_\omega} : \gamma \in (\Gg_\Lambda)_x\}$ of
Theorem~\ref{thm:easytwistsimplicity} part~(\ref{three}).
\end{proof}

\section{Examples}\label{sec:examples}

First we discuss an example of a nondegenerate 2-cocycle on $\ZZ^2$ pulled back over the
degree map to a cocycle on a cofinal $2$-graph for which the twisted $C^*$-algebra is not
simple.
This shows that the hypothesis of \cite[Theorem~7.1]{SWW} that $c|_{\Per(\Lambda)}$
is nondegenerate cannot be relaxed to the weaker assumption that $c$ is nondegenerate
as a cocycle on $\ZZ^k$.

\begin{example}\label{ex:nonsimple}
Let $\theta \in \RR \setminus \QQ$, and define a $1$-cocycle $\phi$ on $\Omega_1$ by
$\phi(m,n) = e^{2\pi i (n-m)\theta}$. As in Section~\ref{sec:CP}, define $c_\phi \in
\Zcat2(\Omega_1 \times T_1, \TT)$ by $c_\phi((\alpha,m),(\beta,n)) = \phi(\beta)^m$. Then
$c_\phi = \varpi \circ d$ where $d : \Omega_1 \times T_1 \to \NN^2$ is the degree map,
and $\varpi \in Z^2(\ZZ^2, \TT)$ is the cocycle $\varpi(m,n) = e^{2\pi i \theta m_2n_1}$.
Since $\theta$ is irrational, the antisymmetric bicharacter associated to $\varpi$ is
nondegenerate.  Note that $\Per(\Lambda) \cong \ZZ$ and so the restriction of $\varpi$ to
$\Per(\Lambda)$ is degenerate.

We have $\Gg_{\Omega_1} \cong \NN \times \NN$ as an equivalence relation. So
$\Gg_{\Omega_1}^{(0)} \times \TT \cong \NN \times \TT$, and under this identification,
the action $\theta$ described in Proposition~\ref{prp:action formula} boils down to
\[
\theta_{(m,n)}(n, z) = (m, e^{2\pi i \theta (n-m)}z).
\]
This is not minimal because each orbit intersects each $\{n\} \times \TT$ in a singleton.
So the resulting twisted $C^*$-algebra is not simple.
\end{example}

We present a second example showing that the same pulled-back cocycle as used in
Example~\ref{ex:nonsimple} can yield a simple $C^*$-algebra if connectivity in the
underlying $1$-graph $\Lambda$ is slightly more complicated than in $\Omega_1$.

\begin{example}
Let $\Lambda$ be a strongly connected 1-graph with at least one edge and suppose that
$\Lambda$ is not a simple cycle. Then $\Lambda$ is cofinal and aperiodic (see, for
example, \cite{KPR}) and contains a nontrivial cycle $\lambda$. A simple example is
$\Lambda = B_2$ the bouquet of 2 loops, so that $C^*(\Lambda) = \Oo_2$.

Take $\theta \in \RR \setminus \QQ$, and define a $1$-cocycle $\phi$ on $\Lambda$ by
$\phi(\lambda) = e^{2\pi i d(\lambda)\theta}$. As in the preceding section, construct
$c_\phi \in \Zcat2(\Lambda \times T_1, \TT)$ by $c_\phi((\alpha,m),(\beta,n)) =
\phi(\beta)^m$. Since $\theta \in \RR \setminus \QQ$, the paths $\mu = \lambda$ and $\nu
= r(\lambda)$ satisfy $r(\mu) = r(\nu)$ and $s(\mu) = s(\nu)$, and
$\phi(\mu)\overline{\phi(\nu)} = e^{2\pi i d(\lambda)\theta}$ where $d(\lambda)\theta$ is
irrational. So Corollary~\ref{cor:1graphspecial} shows that $C^*(\Lambda \times T_1,
c_\phi)$ is simple.
\end{example}

Our next example shows that it is possible for the bicharacter $\omega$ of
$\Per(\Lambda)$ determined by $c$ as in Proposition~\ref{prp:constant in Ii} to be
degenerate and still have $C^*(\Lambda, c)$ simple.

\begin{example}
Let $\Lambda$ be the $1$-graph with vertex $v$ and edges $\{e , f \}$, i.e. the bouquet
$B_2$ of two loops. Fix $\theta \in \RR \setminus \QQ$ and define $\phi : \Lambda^1 \to
\TT$ by $\phi (e) = 1$ and $\phi (f) = e^{2\pi i\theta}$. As pointed out in
\cite[Page~917]{E} one can check that the action $\beta = \beta^\phi$ of $\ZZ$ on $C^* (
\Lambda )$ as given in part~(\ref{one}) of Theorem~\ref{thm:easytwistsimplicity} is
outer. Hence by \cite[Theorem 3.1]{K} $C^* ( \Lambda ) \times_{\beta} \ZZ$ is simple (and
purely infinite by \cite[Lemma 10]{KK}). Since $\Per(\Lambda \times T_1) \cong \ZZ$ the
bicharacter $\omega$ of Proposition~\ref{prp:constant in Ii} is degenerate. Note that we
could also deduce simplicity of $C^*(\Lambda) \times_{\beta} \ZZ$ from
Corollary~\ref{cor:justonemore}.
\end{example}

The next example illustrates that the interaction between the action $\theta$, the group
$\Per(\Lambda)$ and the subgroup $Z_\omega$ can be fairly complicated.

\begin{example}
Consider the $4$-graph $\Lambda = B_2 \times T_3$, where $B_2$ is the bouquet of two
loops as in the preceding example, and $T_3$ denotes $\NN^3$ regarded as a $3$-graph.
Choose irrational numbers $\theta$ and $\rho$. Choose any $\phi : B^1_2 \to \TT^3$ such
that $\phi(e)_1 = 1$ and $\phi(f)_1 = e^{2\pi i \theta}$, and let $c_\phi \in \Zcat2(B_2
\times T_1, \TT^3)$ be as in~\eqref{eq:phic}. Define a bicharacter $\omega$ of $\ZZ^3$ by
$\omega(p,q) = e^{2\pi i \rho q_2p_3}$. Let $c_{\phi,\omega}$ be the cocycle
$c_{\phi,\omega}((\alpha, m, p),(\beta, n, q)) = c_\phi((\alpha,m),(\beta,n))
\omega(p,q)$ of~\eqref{eq:phicomega}. We have $\Per(\Lambda) = \{0\} \times \ZZ^3 \subset
\ZZ^4$. It is routine to check using Lemma~\ref{lem:compute omega} that $Z_\omega = \{0\}
\times \ZZ \times \{0\} \subseteq \Per(\Lambda)$, so is a proper nontrivial subgroup of
$\Per(\Lambda)$. The argument of Corollary~\ref{cor:1graphspecial} shows that
$\{(r(\gamma), \tilde{\phi}(\gamma)|_{Z_\omega}) : \gamma \in (\Gg_{B_2})_x\}$ is dense
in $B_2^\infty \times \widehat{Z}_\omega$ for all $x \in B_2^\infty$. So
Theorem~\ref{thm:easytwistsimplicity}(\ref{three}) shows that $C^*(\Lambda,
c_{\phi,\omega})$ is simple. Note that $\{(r(\gamma), \tilde{\phi}(\gamma)) : \gamma \in
(\Gg_{B_2})_x\}$ may not be dense in $B_2^\infty \times \Per(\Lambda)\widehat{\;}$ for
any $x$; for example if $\phi(e)_3 = \phi(f)_3 = 1$.
\end{example}

Finally, we present an example which is not directly related to our simplicity theorems
here, but is of independent interest. Corollary~8.2 of \cite{KPS4} implies that if
$C^*(\Lambda)$ is simple, then each $C^*(\Lambda, c)$ is simple. Moreover, Theorem~5.4 of
\cite{KPS5} shows that $C^*(\Lambda)$ and $C^*(\Lambda, c)$ very frequently (perhaps
always) have the same $K$-theory. From this and the Kirchberg-Phillips theorem
\cite{kirchpureinf, Phillips:Doc00} we deduce that if $C^*(\Lambda)$ is purely infinite
and simple, then all its twisted $C^*$-algebras for cocycles that lift to $\RR$-valued
cocycles coincide. On the other hand, the twisted $C^*$-algebras of $T_2$, for example,
are rotation algebras and so different choices of cocycle yield different twisted
$C^*$-algebras; but in this instance the untwisted algebra is not simple. We were led to
ask whether there exists a $k$-graph whose twisted $C^*$-algebras (including the
untwisted one) are all simple but are not all identical. We present a $3$-graph with this
property.

\begin{example}
Consider the $3$-coloured graph with vertices $\{v_n : n = 1, 2, \dots\}$, blue edges
$\{e_{n,i} : n \in \NN\setminus\{0\}, i \in \ZZ/n^2\ZZ\}$, red edges $\{f_n : n \in
\NN\setminus\{0\}\}$ and green edges $\{g_n : n \in \NN\setminus\{0\}\}$, with
$r(e_{n,i}) = r(f_n) = s(f_n) = r(g_n) = s(g_n) = v_n$ and $s(e_{n,i}) = v_{n+1}$ for all
$n,i$. The graph can be drawn as follows. (For those with monochrome printers: the solid
edges are blue, the dashed edges are red, and the edges drawn in a dash-dot-dash pattern
are green.)
\[
\begin{tikzpicture}[scale=1.2,>=stealth, decoration={markings, mark=at position 0.5 with {\arrow{>}}}]
    \node (v1) at (0,0) {$v_1$};
    \node (v2) at (2,0) {$v_2$};
    \node (v25) at (4,0) {$v_3$};
    \node at (5,0) {\dots};
    \node (v3) at (6,0) {$v_n$};
    \node (v4) at (8,0) {$v_{n+1}$};
    \node at (9,0) {\dots};
    \draw[blue, postaction = decorate] (v2) to node[pos=0.5, above, black] {\small$e_{1,0}$} (v1);
    \draw[blue, postaction = decorate, in=30, out=150] (v25) to node[pos=0.5, above, black] {\small$e_{2,0}$} (v2);
    \draw[blue, postaction = decorate, in=10, out=170] (v25) to (v2);
    \draw[blue, postaction = decorate, in=350, out=190] (v25) to (v2);
    \draw[blue, postaction = decorate, in=330, out=210] (v25) to node[pos=0.5, below, black] {\small$e_{2, 3}$} (v2);
    \draw[blue, postaction = decorate, in=30, out=150] (v4) to node[pos=0.5, above, black] {\small$e_{n,0}$} (v3);
    \draw[blue, postaction = decorate, in=330, out=210] (v4) to node[pos=0.5, below, black] {\small$e_{n, n^2-1}$} (v3);
    \node at (7,0.1) {$\vdots$};
    \draw[red, dashed, postaction=decorate] (v1) .. controls +(0.75,0.75) and +(-0.75,0.75) .. (v1) node[pos=0.5, above, black] {\small$f_1$};
    \draw[red, dashed, postaction=decorate] (v2) .. controls +(0.75,0.75) and +(-0.75,0.75) .. (v2) node[pos=0.5, above, black] {\small$f_2$};
    \draw[red, dashed, postaction=decorate] (v25) .. controls +(0.75,0.75) and +(-0.75,0.75) .. (v25) node[pos=0.5, above, black] {\small$f_3$};
    \draw[red, dashed, postaction=decorate] (v3) .. controls +(0.75,0.75) and +(-0.75,0.75) .. (v3) node[pos=0.5, above, black] {\small$f_n$};
    \draw[red, dashed, postaction=decorate] (v4) .. controls +(0.75,0.75) and +(-0.75,0.75) .. (v4) node[pos=0.5, above, black] {\small$f_{n+1}$};
    \draw[green!50!black, dashdotted, postaction=decorate] (v1) .. controls +(0.75,-0.75) and +(-0.75,-0.75) .. (v1) node[pos=0.5, below, black] {\small$g_1$};
    \draw[green!50!black, dashdotted, postaction=decorate] (v2) .. controls +(0.75,-0.75) and +(-0.75,-0.75) .. (v2) node[pos=0.5, below, black] {\small$g_2$};
    \draw[green!50!black, dashdotted, postaction=decorate] (v25) .. controls +(0.75,-0.75) and +(-0.75,-0.75) .. (v25) node[pos=0.5, below, black] {\small$g_3$};
    \draw[green!50!black, dashdotted, postaction=decorate] (v3) .. controls +(0.75,-0.75) and +(-0.75,-0.75) .. (v3) node[pos=0.5, below, black] {\small$g_n$};
    \draw[green!50!black, dashdotted, postaction=decorate] (v4) .. controls +(0.75,-0.75) and +(-0.75,-0.75) .. (v4) node[pos=0.5, below, black] {\small$g_{n+1}$};
\end{tikzpicture}
\]
Specify commuting squares by $f_n e_{n,i} = e_{n, i+1} f_{n+1}$, $g_n e_{n,i} = e_{n,
i+n} g_{n+1}$ and $f_n g_n = g_n f_n$ (addition in the subscripts of the $e_{n,i}$ takes
place in the cyclic group $\ZZ/n^2\ZZ$). It is easy to check that this is a complete and
associative collection of commuting squares as in \cite{HRSW}, and so determines a
3-graph $\Lambda$, in which each $d(e_{n,i}) = (1,0,0)$, each $d(f_n) = (0,1,0)$ and
$d(g_n) = (0,0,1)$. It is clear that $\Lambda$ is cofinal. We claim that it is also
aperiodic. For this, \cite[Remark~3.2 and Theorem~3.4]{LS} imply that it is enough to
show that if $\mu,\nu \in \Lambda$ satisfy
\begin{equation}\label{eq:munu}
r(\mu) = r(\nu),\quad s(\mu) = s(\nu)
    \quad\text{ and }\quad d(\mu) \wedge d(\nu) = 0
\end{equation}
and if $\mu\alpha \Lambda \cap \nu\alpha \Lambda \not= \emptyset$ for all $\alpha \in
s(\mu)\Lambda^0$ then $\mu = \nu = r(\mu)$. Fix $\mu,\nu$ satisfying~\eqref{eq:munu}.
Then $\mu = f_m^p$ and $\nu = g_m^q$ (or vice versa) for some $p,q \in \NN$. Let $n =
\max\{p, q\}$ so that $m+n > p,q$, and let $\alpha = e_{m,0}e_{m+1, 0} \cdots\, e_{m+n,0}$.
The factorisation property implies that $\mu\alpha$ has the form $e_{m, i_m} e_{m+1,
i_{m+1}} \cdots\, e_{m+n, p} f_{m+n+1}^p$ and $\nu\alpha$ has the form $e_{m, j_m} e_{m+1,
j_{m+1}} \cdots\, e_{m+n, q(m+n)} g_{m+n+1}^q$. So $\mu\alpha \Lambda \cap \nu\alpha \Lambda
\not= \emptyset$ forces $p = q(m+n)$ in $\ZZ/(m+n)^2\ZZ$. Since $m+n \ge p,q$, this
forces $p = q = 0$ so that $\mu = \nu = v_m$ as required. So $\Lambda$ is aperiodic as
claimed.

It now follows from \cite[Corollary~8.2]{KPS4} that $C^*(\Lambda, c)$ is simple for every
$c \in \Zcat2(\Lambda, \TT)$. Let $\theta \in [0, 1)$ and define $c_\theta \in
\Zcat2(\Lambda, \TT)$ by $c_\theta (\mu,\nu) = e^{2\pi i \theta d(\mu)_3 d(\nu)_2}$. We
show that $C^*(\Lambda, c_\theta)$ and $C^*(\Lambda, c_\rho)$ are nonisomorphic whenever
$\theta$ and $\rho$ are rationally independent\footnote{In an earlier version of the
paper, we proved only that $C^*(\Lambda,\theta)$ and $C^*(\Lambda)$ are nonisomorphic
when $\theta$ is irrational. We thank the anonymous referee for suggesting that we expand
our analysis to encompass the relationship between these algebras for different
irrational values of $\theta$. We believe that the end product is cleaner and more
informative even in in the situation we had originally considered.}.

For each $n = 1, 2, \dots$ the Cuntz-Krieger relations give $s_{f_n} s^*_{f_n} =
s^*_{f_n} s_{f_n} = s_{g_n} s^*_{g_n} = s^*_{g_n} s_{g_n} = p_{v_n}$, so $s_{f_n}$ and
$s_{g_n}$ are unitaries in $C^*(\{s_{f_n}, s_{g_n}\})$. The definition of $c_\theta$
gives $s_{g_n} s_{f_n} = e^{2\pi i \theta} s_{f_n} s_{g_n}$. This is the defining
relation for the rotation algebra $A_\theta$ and so $C^*(\{s_{f_n}, s_{g_n}\}) \cong
A_\theta$. We observe that we can express the corner $p_{v_1} C^*(\Lambda, c_\theta)
p_{v_1}$ as the direct limit of the $C^*$-algebras $p_{v_1} C^*(\Lambda_n, c_\theta)
p_{v_1}$, where $\Lambda_n$ is the locally-convex 3-graph $\{v_1, \dots, v_n\} \Lambda
\{v_1, \dots, v_n\}$. Each of these subalgebras is canonically isomorphic to
$M_{q_n}(A_\theta)$, where $q_n = \prod^{n-1}_{i=1} i^2$ (note $q_1 = q_2 = 1$), so its
$K_0$-group is isomorphic to $\ZZ^2$.

We claim that if  $\theta$ is irrational, then the map on $K_0$ induced by the inclusion
map
\[
p_{v_1} C^*(\Lambda_n, c_\theta) p_{v_1} \hookrightarrow p_{v_1} C^*(\Lambda_{n+1}, c_\theta) p_{v_1},
\]
is given by multiplication by $n^2$. This is clear by direct computation using the
Cuntz--Krieger relation for the class $[p_{v_1}]$ of the identity, and therefore for the
class of a minimal projection in $M_{q_n}(\CC 1) \subseteq M_{q_n}(A_\theta)$.
The other generating $K_0$-class is that of the matrix $p_n \in
A_\theta \subseteq M_{q_n}(A_\theta)$ with $(1,1)$-entry equal to the Powers-Rieffel
projection and all other entries zero. The trace on $M_{q_{n+1}}(A_\theta)$ carries
$p_{n+1}$ to $\theta/q_{n+1}$, while its restriction to the image of $M_{q_n}(A_\theta)$
must agree with the trace on $M_{q_n}(A_\theta)$ so carries the image of $p_n$ to
$\theta/q_n$; so $[\iota(p_n)]_0 = (q_{n+1}/q_n)[p_{n+1}]_0 = n^2 [p_{n+1}]_0$.

We deduce from the continuity of $K$-theory as a functor into the category of ordered
abelian groups (see for example \cite[Theorem~6.3.2]{RLL}) that when $\theta$ is
irrational, $K_0(C^*(\Lambda, c_\theta))$ is isomorphic as an ordered abelian group to
$\QQ + \theta \QQ$ with the order inherited from $\RR$, and hence isomorphic as a group
to $\QQ^2$. Since each $c_\theta$ is of exponential form, Theorem~5.4 of \cite{KPS5} (see
also \cite{Gillaspy}) shows that $K_0(C^*(\Lambda, c_\theta)) \cong \QQ^2$ for all
$\theta$.

Since the tracial-state space of each $p_{v_1} C^*(\Lambda_n) p_{v_1}$ is compact and
nonempty and $p_{v_1} C^*(\Lambda_n) p_{v_1}$ are nested, compactness shows that $p_{v_1}
C^*(\Lambda, c_\theta) p_{v_1}$ admits a trace $\tau$ (or one can directly construct a
trace using the approach of \cite[Proposition~3.8]{PaskRennieEtAl:kth08}). The functional
$\tau_*$ on $K_0(p_{v_1} C^*(\Lambda, c_\theta) p_{v_1})$ induced by any trace $\tau$ has
range $\bigcup \tau_*|_{K_0(p_{v_1} C^*(\Lambda_n, c_\theta) p_{v_1})}$. Since each
$p_{v_1} C^*(\Lambda_n, c_\theta) p_{v_1} \cong M_{q_n}(A_\theta)$, uniqueness of the map
on $K_0(A_\theta)$ induced by a trace on $A_\theta$ (see \cite[Lemma 2.3]{El}) implies
that every trace on $p_{v_1} C^*(\Lambda, c_\theta) p_{v_1}$ induces the same functional
$\tau_* : K_0(p_{v_1} C^*(\Lambda_n, c_\theta) p_{v_1}) \to \RR$, and that this $\tau_*$
has range $\bigcup_n(\ZZ + \theta \ZZ)/q_n = \QQ + \theta \QQ$.

It follows that the Morita-equivalence classes (and so in particular the isomorphism
classes) of $C^*(\Lambda, c_\theta)$ and $C^*(\Lambda, c_\rho)$ are distinct for
rationally independent $\theta$ and $\rho$. In particular if $\theta$ is irrational then
$C^*(\Lambda, c_\theta)$ and $C^*(\Lambda) = C^*(\Lambda, c_0)$ are not isomorphic.
\end{example}

\end{document}